\let\pa=\partial
\let\al=\alpha
\let\g=\gamma
\let\e=\varepsilon
\let\f=\frac
\let\p=\psi
\let\D=\Delta
\let\vth=\vartheta
\def\cT{{\mathcal T}}
\def\bbT{\mathbb{T}}
\def\R{\mathbb{R}}
\def\Re{\mathrm{Re}}
\def\na{\nabla}
\def\p{\partial}
\def\non{\nonumber}
\def\C{\mathop{\bf C\kern 0pt}\nolimits}
\def\DD{\mathop{\bf D\kern 0pt}\nolimits}
\def\K{\mathop{\bf K\kern 0pt}\nolimits}
\def\N{\mathop{\bf N\kern 0pt}\nolimits}
\def\Q{\mathop{\bf Q\kern 0pt}\nolimits}
\def\div{\mathop{\rm div}\nolimits}
\newcommand{\lan}{\langle}
\newcommand{\ran}{\rangle}
\newcommand{\Z}{{\mathbf Z}}
\newcommand{\beq}{\begin{equation}}
\newcommand{\eeq}{\end{equation}}
\newcommand{\ben}{\begin{eqnarray}}
\newcommand{\een}{\end{eqnarray}}
\newcommand{\beno}{\begin{eqnarray*}}
\newcommand{\eeno}{\end{eqnarray*}}
\newtheorem{rmk}{Remark}[section]
\newtheorem{theorem}{Theorem}[section]
\newtheorem{lemma}[theorem]{Lemma}
\newtheorem{proposition}[theorem]{Proposition}
\begin{document}

\title[Transition threshold  for  2D Boussinesq equations]
{ Transition threshold  of   Couette flow for  2D Boussinesq equations}

\author{Xiaoxia Ren}
\address{Department of Mathematics and Physics, North China Electric Power University, 102206, Beijing, P. R. China}
\email{xiaoxiaren@ncepu.edu.cn}

\author{Dongyi Wei}
\address{School of Mathematical Science, Peking University, 100871, Beijing, P. R. China}
\email{jnwdyi@pku.edu.cn}

\maketitle
\begin{abstract}
In this paper,  we prove   the   stability threshold  of $\alpha\leq \f13$ for 2D Boussinesq equations around the Couette flow   in $\mathbb{T}\times \mathbb{R}$ with Richardson number $\g^2>\f14$ and different   viscosity $\nu$ and thermal diffusivity $\mu$. More precisely, if $\|v_{in}-(y,0)\|_{H^{s+1/2}}+
\|\rho_{in}+\g^2 y-1\|_{H^{s+1/2}}\leq c(\min\{\nu,\mu\})^{1/3}$, $\f{\nu+\mu}{2\g\sqrt{\nu \mu} }< 2-\e$, $s>3/2$, then the asymptotic
 stability holds.   This stability threshold is consistent with the optimal stability  threshold for the 2D  Navier-Stokes  equations in Sobolev space. And in the sense of inviscid damping effect,  the regularity assumption of the initial data should be sharp.

\end{abstract}
%

\section{Introduction}
In this paper, we consider the 2D incompressible Boussinesq equations  in $\mathbb{T}\times \mathbb{R}$
\ben\label{NS-Boussinesq} 
\left\{
\begin{array}{l}
\p_t v-\nu\Delta v+\na \pi=-v\cdot\na v-(0,\rho g),  \\
\p_t\rho-\mu\Delta \rho=-v\cdot\na \rho, \\
\div v=0, \\
v(x, y,0)=v_0(x, y),\ \ \rho(x, y, 0)=\rho_0(x, y),
\end{array}\right.
\een
here
 $v=(v_1, v_2)$ is the velocity field,  $p$ is the pressure, $\rho$ is the temperature (density), $g=1$ is the gravitational constant, 
 $\nu$ is the viscosity coefficient and $\mu$ is the thermal diffusivity. The first two equations in \eqref{NS-Boussinesq}
are the incompressible Navier-Stokes equation with buoyancy forcing in the vertical
direction. The third equation is a balance of the temperature convection and diffusion.
 The 2D Boussinesq system  is related to the study of fluid motion such as ocean circulation and atmospheric fronts, as well as other astrophysical movements \cite{GAE,PED,Ma}. In addition, the system is also known for its close connection with basic models of 3D incompressible flows, such as Euler and Navier-Stokes equations \cite{LPZ}.

A steady state of \eqref{NS-Boussinesq}  is Couette flow with a linear function of the vertical component
\beq\label{steady state}
v_s=(y,0),\quad \rho_s=-\g^2 y+1,\quad p_s=\f12\g^2y^2-y+C_0,
\eeq
where the constant $\g^2$ is the Richardson number.
The Richardson number stands as a pivotal control parameter in determining the stability of stratified shear flows. According to the Miles-Howard theorem in \cite{Howard, Miles}, any flow in the inviscid, non-diffusive limit is assured of linear stability provided that the local Richardson number surpasses the threshold value of  $\f14$ everywhere. Conversely, when the Richardson number dips below this critical value of  $\f14$, unstable modes may emerge, as elaborated in \cite{Drazin}.

In this paper, we investigate the asymptotic stability of the Couette flow in the scenario where the Richardson number $\g^2> \f14$. Let $v=\tilde{u}+v_s$, $\pi=\tilde{p}+p_s$ and $\rho=\g^2\tilde{\vth}+\rho_s$, then the perturbation $(\tilde{u},\tilde{p},\tilde{\vth})$ satisfies
\begin{equation}\label{eq: u sys}
 \left\{\begin{array}{l}
\pa_t\tilde{u}+y\pa_x\tilde{u}+(\tilde{u}_2,0)+\tilde{u}\cdot\na \tilde{u}-\nu\D \tilde{u}+\na \tilde{p}=- (0,
    \g^2\tilde{\vth}),\\
\pa_t\tilde{\vth}+y\pa_x\tilde{\vth}+\tilde{u}\cdot\na\tilde{\vth}-\mu\D \tilde{\vth}-\tilde{u}_2=0,\\
\div \tilde{ u}=0, \\
\tilde{u}|_{t=0}=\tilde{u}_{in}(x,y),\ \tilde{\vth}|_{t=0}=\tilde{\vth}_{in}(x,y).
\end{array}\right.
\end{equation}
where the pressure $ \tilde{p}$ is determined by
$$\tilde{p}=(-\Delta)^{-1} [2\p_1 \tilde{u}_2+\g^2 \p_2 \tilde{\vth}+\p_i \p_j (\tilde{u}_i \tilde{u}_j )].$$

The transition from laminar flow to a turbulent state has been a classical problem in fluid dynamics since the early experiments conducted by Reynolds \cite{Re}, {which is mainly concerned with how the laminar flows become unstable and transit to turbulence at high Reynolds number.  
Some laminar flows are
 linearly stable for any Reynolds number \cite{DR,Rom}, but could be unstable and transition to turbulence for small perturbations at high Reynolds number \cite{DHB,TA}.
 It was observed by Kelvin in \cite{Kel} that the basin of
 attraction of the laminar flow shrinks as $Re\to\infty$ so that the flow could become nonlinearly unstable for small but finite
 perturbations.
 Thus, an important question firstly proposed by Trefethen et al. \cite{T} is to study the transition threshold: \textit{how much disturbance will lead to the instability of the flow and the dependence of disturbance on the Reynolds number}.}
To understand the transition mechanism, we are concerned with 
the following mathematical version of stability threshold problem
formulated by Bedrossian, Germain and Masmoudi \cite{BGM-BAMS}:

Given a norm $\|\cdot\|_{X}$, determine a $\al=\al(X)$, such that  (here $\kappa=\min\{\nu,\mu\}$)
\begin{align*}
\|(\tilde{u}_0,\tilde{\vth}_0)\|_{X}\leq \kappa^{\al}\Rightarrow \text{stability},\nonumber\\
\|(\tilde{u}_0,\tilde{\vth}_0)\|_{X}\gg \kappa^{\al}\Rightarrow \text{instability}.
\end{align*}
The exponent $\al $ is referred to as the transition threshold.


When the temperature $ \tilde{\vth}=0$, the Boussinesq equations \eqref{eq: u sys} simplify to the Navier-Stokes equations. 
Significant efforts have been made to quantify the stability threshold for Couette flow in Navier-Stokes equations. 
In the 2D domain $\mathbb{T}\times\mathbb{R}$, Bedrossian,  Vicol and Wang \cite{B} established a stability threshold of $\al\leq\f12$. This threshold has subsequently been refined to $\al\leq\f13$ in \cite{M, WZ23}.
In a finite channel $\mathbb{T}\times[-1,1]$, Chen, Li, Wei and Zhang \cite{CL} demonstrated a stability threshold of  $\al\leq\f12$
for the 2D Navier-Stokes equations with non-slip boundary condition. Additionally, Wei and Zhang \cite{WZ2} proved a threshold of $\al\leq\f13$
for the same equations with a Navier-slip boundary condition.
 The 3D transition threshold problem for the Navier-Stokes equations is notably distinct and more challenging due to the lift-up effect. For further insights, see, for example, \cite{BGM, BGM2, CW, WZ}.

 The Boussinesq system is different from the Navier-Stokes equations.
On the one hand, the buoyancy force in the velocity equation could drive the growth of the energy
and more generally the growth of the Sobolev norms;
 On the other hand, the temperature field has a stabilizing effect \cite{Zhai, DWZ, ZZi, NZ, MZZ}. For Euler-Boussinesq system (i.e. $\nu=\mu=0$), Lin and Yang \cite{YangLin} obtain the  linear stability for steady state \eqref{steady state} with $\g^2>\f14$. 
One can also refer to \cite{Bianchini} for linear case and \cite{BBZ} for nonlinear case. 
For NS-Boussinesq system with $\mu=0$, Masmoudi, Said-Houari, and Zhao \cite{MSZ} proved  the asymptotically stability for Couette flow in Gevrey class.
When $\g^2>\f14$ with $\nu\sim\mu$, the stability  threshold $\al=\f12$ is obtained in    \cite{Zhai, RW}; when $\g^2=0$, one can refer to \cite{DWZ, ZZi, NZ} for stability in  $\bbT\times \mathbb{R}$  and \cite{MZZ} for stability in $\mathbb{T}\times[-1,1] $. 
 For asymptotic stability when $\g$ is small
 one may refer to \cite{Zillinger1}. 
 For the 2D case when the domain is replaced by  $\mathbb{R}^2$, one may refer to \cite{A}. For the 3D case, one may refer to \cite{ZN} for linear stability and \cite{ZZW} for  nonlinear stability. 
 

To state the main result, we change the coordinates to  $X=x-yt, \   Y=y,$ according to
 the characteristics of the Couette flow $(y,0)$,
then
$$\ \  \p_x=\p_X, \ \ \p_y=\p_Y-t\p_X.
$$
Set 
\begin{align*}
u(t,X,Y)=\tilde{u}(t,x,y), \ \ \ \vth(t,X,Y)=\tilde{\vth}(t,x,y), \ \ \ p(t,X,Y)=\tilde{p}(t,x,y) \end{align*}
and \eqref{eq: u sys} becomes
\begin{equation}\label{eq: u new}
 \left\{\begin{array}{l}
u_t-\nu\D_Lu+(u_2,0)+\na_L p_L+ (
   0,\g^2\vth)=F,\\
\vth_t-\mu\D_L \vth-u_2=G,\\
\div_L u=0,\\
u|_{t=0}=u_{in}(x,y),\ \vth|_{t=0}=\vth_{in}(x,y).
\end{array}\right.
\end{equation}
where
\begin{align*}
\Delta_L&=\pa_X^2+(\pa_Y-t\pa_X)^2, \quad \na_L=(\na_L^1, \na_L^2)=(\pa_X,\pa_Y-t\pa_X).\quad \div_L u=\na_L^1 u_1+\na_L^2u_2,\\
p_L&=(-\Delta_L)^{-1}(2\pa_Xu_2+\g^2 (\pa_Y-t\pa_X) \vth), \quad F=-u\cdot\na_L u-\na p_{NL}, \quad G=-u\cdot\na_L \vth,
 \end{align*}
with 
\begin{align*}
-\Delta_Lp_{NL}&=\div_L(u\cdot\na_Lu)=\na_L^iu_j \na_L^ju_i=(\na_L^1u_1)^2+(\na_L^2u_2)^2+2\na_L^1u_2\na_L^2u_1\\&=
2((\na_L^1u_1)^2+\na_L^1u_2\na_L^2u_1)=
2((\na_L^2u_2)^2+\na_L^1u_2\na_L^2u_1)=2\div_L(u_2\na_L^2u).
 \end{align*}

\if0 Taking Fourier transform to the linearized  system of \eqref{eq: u new}, we have 
 \begin{equation*}
 \left\{\begin{array}{l}
\hat{u}_t+\nu(k^2+(\xi-kt)^2)  \hat{u}+(
   \hat{u}_2,
        \g^2\hat{\vth})+(ik \hat{p},i(\xi-kt)\hat{p})=0,\\
\hat{\vth}_t+\mu(k^2+(\xi-kt)^2) \hat{\vth}-\hat{u}_2=0,
\end{array}\right.
\end{equation*}
then 
\begin{align}\label{formal}
&\Re\Big( \partial_t(|\hat{u}|^2+\g^2|\hat{\vth}|^2+ \hat{\vth}\overline{\hat{u}}_1)\Big)\nonumber\\
=&\Re \Big(2\p_t \hat{u}_1 \overline{\hat{u}}_1+2\p_t \hat{u}_2 \overline{\hat{u}}_2+2\g^2 \p_t \hat{\vth}\overline{\hat{\vth}} +\p_t \hat{\vth} \overline{\hat{u}}_1+\hat{\vth} \p_t\overline{\hat{u}}_1\Big)\nonumber\\
=&\Re \Big(-2\hat{u}_2 \overline{\hat{u}}_1-2\g^2 \hat{\vth} \overline{\hat{u}}_2+2\g^2 \hat{u}_2\overline{\hat{\vth}}+\hat{u}_2 \overline{\hat{u}}_1-\hat{\vth} \overline{\hat{u}}_2-\hat{\vth} \overline{ik \hat{p}}\Big)+\text{dissipation  terms}\nonumber\\
=&\Re \Big(-\hat{u}_2 \overline{\hat{u}}_1-\hat{\vth} \overline{\hat{u}}_2-\hat{\vth} \overline{ik \hat{p}}\Big)+\text{dissipation  terms}.
\end{align}
Since the trouble term $\Re (-\hat{u}_2 \overline{\hat{u}}_1)$ in \eqref{formal} can not be controlled by dissipation terms, we  modify energy \eqref{energy1} as 
\beno
\partial_t\Big((k^2+(\xi-kt)^2)^\f12 (|A_k\hat{u}|^2+\g^2|A_k\hat{\vth}|^2+\Re\sum_{k\in \mathbb{Z}}\int_{\mathbb{R}} A_k\hat{\vth}A_k \overline{\hat{u}}_1))\Big).
\eeno
Then  $\Re (-(k^2+(\xi-kt)^2)^\f12A_k\hat{u}_2 A_k \overline{\hat{u}}_1) $ can be cancelled by
\begin{align*}
&\partial_t\Big((k^2+(\xi-kt)^2)^\f12 \Big)|A_k\hat{u}|^2\nonumber\\
=&-\f{k(\xi-kt)}{(k^2+(\xi-kt)^2)^\f12} |A_k\hat{u}|^2=\Re\Big( (k^2+(\xi-kt)^2)^\f12 (A_k \hat{u}_2 A_k \overline{\hat{u}}_1)\Big),
\end{align*}
using divergence free condition.\fi

Our stability result is stated as follows.
\begin{theorem}
\label{th:main}
Let $\kappa=\min\{\nu,\mu\}\in(0, 1)$, $\g>\f12$,  $s>3/2$ and  $\f{\nu+\mu}{2\g\sqrt{\nu \mu} }< 2-\e$, $0<\e<1/\g$. There exist  constants $c>0$, $\epsilon>0$  (depending only on $\g, s,\e $) such that if
$
\| (u_{in}, \vth_{in})\|_{H^{s+1/2}}\leq c\kappa^\f13,
$
then the solution  to system \eqref{eq: u new} is global in time and satisfies the following:

(1) global stability estimate in $  H^{s+1/2}$:
 \begin{align}\label{stability}
\| \lan\na_L\ran^\f12 (u, \vth)\|_{H^s }\le  C\| (u_{in}, \vth_{in})\|_{H^{s+1/2}},
\end{align}

(2) inviscid damping estimate:
\begin{align}\label{2}
    \|(u_1)_{\neq}\|_{L^2}+\lan t\ran\|u_2\|_{L^2}+\|\vth_{\neq}\|_{L^2}\leq C\lan t\ran^{-\f12}  e^{-\epsilon\kappa^\f13 t}\| (u_{in}, \vth_{in})\|_{H^{s+1/2}},
    \end{align}
    
  (3) enhanced dissipation estimate:
  \begin{align}\label{3}
     \| \lan\na_L\ran^\f12 (u_{\neq}, \vth_{\neq})\|_{H^s }\leq Ce^{-\epsilon\kappa^\f13 t}\| (u_{in}, \vth_{in})\|_{H^{s+1/2}},
    \end{align}
where $f_{\neq}=P_{\neq}f=f-P_0f$ and $P_0f=\f{1}{2\pi}\int_{\mathbb{T}}f(x,y)dx$ denote the non-zero mode and
 the zero mode, $ C$ is a constant depending only on $\g, s,\e $.

\end{theorem}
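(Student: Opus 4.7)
The plan is to prove Theorem~\ref{th:main} by a bootstrap argument in the shear-adapted variables of \eqref{eq: u new}, built around a weighted energy that symmetrizes the linear $u$--$\vartheta$ coupling and simultaneously carries the enhanced-dissipation rate $\kappa^{1/3}$ and the half-derivative gain $|\nabla_L|^{1/2}$ responsible for inviscid damping. The bootstrap hypotheses are exactly the three conclusions \eqref{stability}, \eqref{2}, \eqref{3} with slightly enlarged constants; smallness $c\kappa^{1/3}$ of the initial data is what ultimately closes them.

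For the weight I would take a Fourier multiplier of the form
\begin{equation*}
A=e^{\lambda(t)}\,m(t,\nabla_L)\,\langle\nabla_L\rangle^{s}\,|\nabla_L|^{1/2},\qquad e^{\lambda(t)}\sim e^{c\kappa^{1/3}t},
\end{equation*}
where $e^{\lambda}$ is activated only on non-zero $X$-modes and $m$ is a standard ghost multiplier (in the spirit of \cite{B,M,WZ23}) whose logarithmic derivative $-\dot m/m$ absorbs the streaming--diffusion resonance near $\xi\simeq kt$. The master energy is
\begin{equation*}
E(t)=\|Au\|_{L^2}^2+\gamma^2\|A\vartheta\|_{L^2}^2+2\beta\,\Re\langle A\vartheta,Au_1\rangle,\qquad |\beta|<\gamma,
\end{equation*}
the cross term being the symmetrizer suggested by the (commented-out) formal computation in the paper: using $\div_L u=0$, i.e.\ $k\hat u_1=-(\xi-kt)\hat u_2$, the time derivative of the $|\nabla_L|^{1/2}$ factor contributes exactly the term needed to cancel the only otherwise-uncontrolled buoyancy contribution $\Re(-\hat u_2\overline{\hat u_1})$. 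Coercivity of $E$ in $(u_1,u_2,\vartheta)$ and positivity of the induced dissipation form are where the stratification hypotheses $\gamma>1/2$ and $\frac{\nu+\mu}{2\gamma\sqrt{\nu\mu}}<2-\e$ enter: they permit a choice of $\beta\in(0,\gamma)$ for which the full linear quadratic form in $(u,\vartheta,\nabla_L u,\nabla_L\vartheta)$ remains strictly positive, uniformly in time.

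Next I would compute $\tfrac{d}{dt}E+\mathcal D(t)=\mathcal{NL}(t)$, where $\mathcal D$ combines the enhanced-dissipation gain $\sim\kappa^{1/3}\|A(u,\vartheta)\|_{L^2}^2$ from $\dot\lambda$, the ghost-multiplier dissipation from $\dot m/m$, and the genuine $\kappa\|\nabla_L A(u,\vartheta)\|_{L^2}^2$ from $-\nu\Delta_L,-\mu\Delta_L$. The nonlinear right-hand side $\mathcal{NL}$ is handled by a paraproduct split in $Y$ and Sobolev product estimates; the decisive structural input is the identity $-\Delta_L p_{NL}=2\,\div_L(u_2\nabla_L^2u)$ derived in the excerpt, which forces a $u_2$-factor, and thus the inviscid-damping decay $\langle t\rangle^{-2}$, into every pressure contribution. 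Transport terms $u\cdot\nabla_L u$ and $u\cdot\nabla_L\vartheta$ are split into zero-mode/non-zero-mode interactions: purely non-zero interactions pick up $e^{-2\e\kappa^{1/3}t}$ and, combined with the initial smallness $c\kappa^{1/3}$, fit inside the dissipation budget; zero $\times$ non-zero interactions produce one factor of $u_2$ (which vanishes in its zero mode since $P_0u_2=0$ by $\div_L u=0$ and periodicity in $X$) or absorb the $Y$-transport by $(P_0u_1)\partial_X$, controlled by the zero-mode $H^{s+1/2}$ bound. The zero modes $P_0u_1$ and $P_0\vartheta$ themselves satisfy genuine heat equations forced only by non-zero-mode quadratics and are controlled by a direct $H^{s+1/2}$ energy estimate in $Y$, thereby not spoiling the estimates for the non-zero modes.

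The main obstacle I expect is the joint design of the symmetrizer and of $A$: the coefficients $\beta,\lambda,m$ must be chosen so that every commutator between $A$ and the buoyancy/divergence-free coupling, as well as every interaction between the cross term $\Re\langle A\vartheta,Au_1\rangle$ and the dissipation, produces terms either favorable in sign or strictly dominated by the ghost-multiplier piece, uniformly across the admissible range of $(\nu,\mu,\gamma)$; the quantitative condition $\frac{\nu+\mu}{2\gamma\sqrt{\nu\mu}}<2-\e$ is genuinely tight at $\gamma=1/2$ and is precisely what certifies this coercivity. A secondary but essential difficulty is the nonlinear closure at the critical regularity $H^{s+1/2}$ with $s>3/2$: every derivative placed on $u_2$ has to be compensated by inviscid-damping decay, which forces a careful distribution of derivatives both in the paraproducts and in the pressure term $p_{NL}$. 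These two points together pin the threshold at $\alpha=1/3$, matching the sharp 2D Navier--Stokes threshold in Sobolev regularity.
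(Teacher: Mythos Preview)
Your symmetrizer is correct and matches the paper: the energy $|A\hat u|^2+\gamma^2|A\hat\vartheta|^2+\Re(A\hat\vartheta\,\overline{A\hat u_1})$ together with the $|\nabla_L|^{1/2}$ weight is exactly the structure used here (and already in \cite{RW}). The identification of the Richardson conditions $\gamma>1/2$ and $\tfrac{\nu+\mu}{2\gamma\sqrt{\nu\mu}}<2-\e$ as the coercivity/dissipation constraints, and of the $u_2$-factor in $p_{NL}$, is also right.

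The gap is that the scheme you outline closes at $\alpha=1/2$, not $\alpha=1/3$. Two specific ingredients are missing. First, a \emph{two-time-scale split}: for $t\lesssim\kappa^{-1/6}$ the enhanced dissipation rate $\kappa^{1/3}$ and the exponential weight $e^{\epsilon\kappa^{1/3}t}$ are useless, and the nonlinear commutators from your ``paraproduct $+$ smallness $c\kappa^{1/3}$'' argument are only controlled by $C\langle t\rangle E^{3/2}$; integrating this forces $\langle t\rangle^2\,c\kappa^{1/3}\lesssim1$, which is exactly why the paper runs a separate short-time argument on $[0,\kappa^{-1/6}]$ with no exponential weight and only then switches multipliers. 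Second, and more importantly, your multiplier $m$ is described as a ``standard ghost multiplier'' absorbing the resonance near $\xi\simeq kt$. That handles the linear streaming but not the nonlinear \emph{echo cascades}, which are resonances at $\xi\simeq jt$ for $j\neq k$ generated by the transport term. The paper's long-time multiplier contains an extra piece $\mathcal M_3(t,k,\xi)=\sum_{j\neq0}j^{-1}\langle k-j\rangle^{-\delta}\psi_\delta\big(\tfrac{\xi-tj}{\langle k-j\rangle+|j|}\big)$ whose time derivative produces a positive absorption term $\Upsilon(t,k,\xi)$ tailored to these cross-frequency interactions; Lemma~5.4 shows precisely that the worst nonlinear pieces are bounded by $\|\sqrt{\Upsilon}\,\mathcal M(\hat u,\hat\vartheta)\|_{L^2}$, which is what you then give back on the dissipation side. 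Without $\mathcal M_3$ (or an equivalent device, as in \cite{M}), the high-low interactions in $u\cdot\nabla_L(u,\vartheta)$ do not close against $\kappa^{1/3}$-dissipation alone, and your paraproduct estimate will lose a factor that forces $\alpha\geq1/2$. The obstacles you name (symmetrizer design, derivative distribution on $u_2$) are real but secondary; the threshold is set by echoes.
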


\begin{rmk} Even for the linearized 2D Euler-Boussinesq system, the initial velocity is required
to be in $H^2$ in order to obtain the inviscid damping estimate of the velocity, especially the $\lan t\ran^{-\f32} $ decay of $u_2$ as in Theorem \ref{th:main}, see \cite{YangLin}. 
In this sense, the regularity assumption of the initial data should be sharp. 
 
\end{rmk}

\begin{rmk}   Very recently, Knobel  \cite{Knobel}  also improved the stability threshold for the Boussinesq equations to  $1/3$ for $\g>1/2$
in $H^N (N\geq 13)$ when  $\nu=\mu$ using symmetric variables.
However, there are several key distinction   between \cite{Knobel}  and our paper. Indeed,  we do not  employ  symmetric variables,  we use the energy estimate as in \cite{RW} to removed the technical restriction $\nu=\mu$. Moreover, by more delicate  multiplier estimate, we  only required the  initial data to be in  $H^{s+\f12} (s>\f32)$.
\end{rmk}

In \cite{WZ23}, the stability threshold $1/3$ is obtained  for the Navier-Stokes equations by  introduce two new ideas: \textit{ (1) make the energy
estimates in short time scale $t\leq \kappa^{-\f16}$ and long time scale $t\geq \kappa^{-\f16}$ separately; (2) construct a new multiplier to control the growth caused by echo cascades.} In this paper, we further show that the approach is also applicable to Boussinesq equations with a Richardson number satisfying $\g^2>1/4$ and obtain the stability threshold for 2D Boussinesq  equations to $\alpha=\f13$. This stability threshold is consistent with the optimal stability  threshold for the 2D  Navier-Stokes  equations in Sobolev space.

We consider the velocity equation directly and define the energy as 
 \begin{align}\label{energy1}
 D(t):=\|m \hat{u}\|_{L^2_{k, \xi}}^2+\g^2 \|m\hat{\vth}\|_{L^2_{k, \xi}}^2 +\Re \sum_{k\in \mathbb{Z}}\int_{\mathbb{R}}  (m\hat{\vth}m \overline{\hat{u}}_1) d\xi,  
 \end{align}
then  $D(t)$ is a positive quadratic form when $\g^2>\f14$. Here multiplier 
\begin{equation}\label{eq: u newm}
 m(t, k, \xi)=\left\{\begin{array}{l}
|k_+, \xi-kt|^\frac{1}{2} \lan k, \xi\ran^s e^{M_0(t, k, \xi)},\ \   t\leq \kappa^{-\f16},\\
 |k_+, \xi-kt|^\frac{1}{2} \lan k, \xi\ran^s\mathcal{A} e^{\mathcal{M}_0(t, k, \xi)}, \ \ \   t\geq \kappa^{-\f16},
\end{array}\right.
\end{equation}
{are defined in Section 2 for $ t\leq \kappa^{-\f16}$ case to capture the inviscid damping  effect and for $ t\geq \kappa^{-\f16}$ case  to capture the inviscid damping and enhanced dissipation effect.}

In fact, mechanism leading to stability is the inviscid damping and enhanced dissipation effect.  Similar phenomena also happen in other fluids systems.  One can refer to \cite{BM,IJ,IJ1,MZ} for the inviscid damping effect of Euler equations, \cite{BGM2, BMV, CL, CW, M,WZ,WZ23} 
for the enhanced dissipation  effect of Navier-Stokes equations around Couette flow. For other shear flows, one may refer to \cite{IMM, WZZ, LWZ} for Kolmogorov flow and  \cite{ZEW,  DL} for Poiseuille flow. 
 
In  \cite{IJ1},  asymptotic stability of the Euler equations relies on multiplier bounds from symmetrization and different frequency considerations. 
In this paper, we   build the multiplier bounds that hold for all $k,l\in\mathbb{Z}$, reducing  the proof of nonlinear stability to a linear stability analysis in section 4 and section 6, and the estimate of the multiplier bounds section 3 and section 5. For the linear analysis,  when the Richardson number  $\g^2>1/4$, we identify a $\lan \na_L\ran^\f12$ energy structure \cite{RW} and a coupled energy, which will cancel the linear term and close the linear energy estimate.




\smallskip

{\bf{Notations:}}
Through this paper, we set $\pa_t=\f{\p}{\p t}$. We denote by $C$ a positive constant depending only on $ \g,s,\e$ (or some other fixed constants), but independent of $\nu,\mu,t$, which may be different from line to line.
 The notation
$A\lesssim B$ means that 
$A\leq CB $, $A\sim B$ means that $A\lesssim B$ and $B\lesssim A$. For $a,b \in \mathbb{R}$, set
\begin{align*}
|a,b|={\sqrt{a^2+b^2}}, \ \ \lan a\ran=\sqrt{1+a^2},\ \lan a,b \ran=\sqrt{1+a^2+b^2},\ \  (\text{then}\ \   \lan\na_L \ran=(1-\Delta _L )^\f12).
\end{align*}
We define the Fourier transform as
\begin{align*}
\mathcal{F}(f):=\hat{f}(k, \xi)=\f{1}{2\pi}\iint_{\mathbb{T}\times\mathbb{R}}e^{-i(kX+\xi Y)}f(X,Y)dXdY,
\end{align*}
and
$$
f(X, Y)=\f{1}{2\pi}\sum_{k\in\mathbb{Z}} \int_{\mathbb{R}} e^{i(kX+\xi Y)} \hat{f}(k, \xi) d\xi.
 $$
For $a\in \mathbb{R}$, we define the  inhomogeneous Sobolev space norm
\begin{align}\label{Hs}
\|f\|_{H^a}^2:=\|f\|_{H^a_{X,Y}}^2=\sum_{k\in\mathbb{Z}}\int_{\mathbb{R}}\lan k,\xi\ran^{2a}|\hat{f}(k, \xi)|^2d\xi,
\end{align}{and the operator}
\begin{align}
&\lan\na_L \ran^{a}f(X, Y):=\f{1}{2\pi}\sum_{k\in\mathbb{Z}} \int_{\mathbb{R}} e^{i(kX+\xi Y)} \lan k,\xi-kt\ran^a\hat{f}(k, \xi) d\xi.\nonumber
\end{align}


\smallskip
\setcounter{equation}{0}
\section{ Sketch of the proof of  Theorem \ref{th:main} }
\begin{lemma}\label{Lem4}
For fixed $\lambda>0$, we have (here $C$ depends only on $\lambda$)
\begin{equation}\label{Poisson}
   \int_\R \f{d\eta}{|a,\eta|^{1+\lambda}|b,z-\eta|^{1+\lambda}}\leq\f{C}{|ab|^{\lambda}}\f{|a+b|^{\lambda}}{|a+b,z|^{1+\lambda}},\quad \forall \  a>0,\ b>0,\ z\in\R.
\end{equation}
\end{lemma}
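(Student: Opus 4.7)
The plan is to prove the bound by a region decomposition in $\eta$ based on the elementary triangle inequality applied to the 2D vectors $(a,\eta)$ and $(b,z-\eta)$ in $\R^2$:
\begin{equation*}
|a,\eta|+|b,z-\eta|=\sqrt{a^2+\eta^2}+\sqrt{b^2+(z-\eta)^2}\geq \sqrt{(a+b)^2+z^2}=|a+b,z|.
\end{equation*}
This immediately gives $\max(|a,\eta|,|b,z-\eta|)\geq \tfrac12|a+b,z|$ for every $\eta$, so the pointwise mass is always concentrated, in at least one of the two factors, at the ``large'' scale $|a+b,z|$.

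Next I would split the integration domain as $\R=E_1\cup E_2$ with
\begin{equation*}
E_1=\{\eta:|a,\eta|\geq|b,z-\eta|\},\qquad E_2=\{\eta:|b,z-\eta|>|a,\eta|\}.
\end{equation*}
On $E_1$ the previous observation gives $|a,\eta|\geq\tfrac12|a+b,z|$, and on $E_2$ it gives $|b,z-\eta|\geq\tfrac12|a+b,z|$. In each region I pull the large factor out as
\begin{equation*}
\frac{1}{|a,\eta|^{1+\lambda}}\leq \frac{2^{1+\lambda}}{|a+b,z|^{1+\lambda}}\quad\text{on }E_1,\qquad\frac{1}{|b,z-\eta|^{1+\lambda}}\leq \frac{2^{1+\lambda}}{|a+b,z|^{1+\lambda}}\quad\text{on }E_2,
\end{equation*}
and integrate the remaining one-dimensional factor by the scaling identity
\begin{equation*}
\int_\R \frac{d\eta}{|c,\eta|^{1+\lambda}}=\int_\R\frac{d\eta}{(c^2+\eta^2)^{(1+\lambda)/2}}=\frac{C_\lambda}{c^\lambda},\qquad c>0,
\end{equation*}
valid for $\lambda>0$. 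Adding the two contributions yields
\begin{equation*}
\int_\R \frac{d\eta}{|a,\eta|^{1+\lambda}|b,z-\eta|^{1+\lambda}}\leq \frac{C_\lambda\,2^{1+\lambda}}{|a+b,z|^{1+\lambda}}\Bigl(\frac{1}{b^\lambda}+\frac{1}{a^\lambda}\Bigr)=\frac{C_\lambda\,2^{1+\lambda}(a^\lambda+b^\lambda)}{(ab)^\lambda\,|a+b,z|^{1+\lambda}}.
\end{equation*}

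Finally I would use the elementary inequality $a^\lambda+b^\lambda\leq 2(a+b)^\lambda$ for $a,b>0$ and $\lambda>0$ (each summand is bounded by $(a+b)^\lambda$ since $\lambda>0$), which converts the numerator to $(a+b)^\lambda$ up to an absolute constant, producing exactly the right-hand side of \eqref{Poisson} with a constant $C$ depending only on $\lambda$. The argument is short and direct, with the only conceptual ingredient being the triangle-inequality lower bound for $\max(|a,\eta|,|b,z-\eta|)$; I do not foresee any real obstacle, merely bookkeeping of the constants. This is essentially the Poisson-kernel convolution bound $P_a\ast P_b\sim P_{a+b}$ at the integer exponent $\lambda=1$ and a routine interpolation-by-splitting for other $\lambda>0$.
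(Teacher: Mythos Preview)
Your proof is correct and follows essentially the same approach as the paper: both rely on the triangle inequality $|a,\eta|+|b,z-\eta|\geq|a+b,z|$, the scaling integral $\int_\R|c,\eta|^{-1-\lambda}d\eta=C_\lambda c^{-\lambda}$, and the bound $a^\lambda+b^\lambda\leq C(a+b)^\lambda$. The only cosmetic difference is that the paper replaces your domain split $E_1\cup E_2$ by the pointwise algebraic inequality $|a+b,z|^{1+\lambda}\leq 2^\lambda(|a,\eta|^{1+\lambda}+|b,z-\eta|^{1+\lambda})$, which yields the same two one-variable integrals without partitioning~$\R$.
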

\begin{proof} Notice that
\begin{align*}
&\int_\R \f{|a+b,z|^{1+\lambda}d\eta}{|a,\eta|^{1+\lambda}|b,z-\eta|^{1+\lambda}}\leq 2^{\lambda}\int_\R \f{|a,\eta|^{1+\lambda}+|b,z-\eta|^{1+\lambda}}{|a,\eta|^{1+\lambda}|b,z-\eta|^{1+\lambda}}d\eta\\=&
2^{\lambda}\int_\R \f{d\eta}{|a,\eta|^{1+\lambda}}+2^{\lambda}\int_\R \f{d\eta}{|b,z-\eta|^{1+\lambda}}=\f{C}{|a|^{\lambda}}+\f{C}{|b|^{\lambda}}=C\f{|a|^{\lambda}+|b|^{\lambda}}{|ab|^{\lambda}}
\leq\f{C|a+b|^{\lambda}}{|ab|^{\lambda}},
\end{align*}
dividing both sides by $|a+b,z|^{1+\lambda}$ gives \eqref{Poisson}.
\end{proof}
The following lemma is used to prove the inviscid damping estimate in Theorem \ref{th:main}.
\begin{lemma}\label{Lem3}
    Let $s>3/2$, $t\geq0$. {Assume that $\div_L u=0$}. Then
    \begin{align*}
       &\|(u_1)_{\neq}\|_{L^2}+\lan t\ran\|u_2\|_{L^2}+\lan t\ran\|\hat{u}_2\|_{L^1}\leq C\lan t\ran^{-\f12}\|\lan \nabla_L\ran^{\f12} u_{\neq}\|_{H^s},\\& \|\vth_{\neq}\|_{L^2}\leq C\lan t\ran^{-\f12}\|\lan \nabla_L\ran^{\f12} \vth_{\neq}\|_{H^s}.
    \end{align*}
\end{lemma}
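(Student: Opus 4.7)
I would work entirely on the Fourier side, where
\[
\|\lan\na_L\ran^{1/2} f_{\neq}\|_{H^s}^2 = \sum_{k\neq 0}\int_{\R}\lan k,\xi-kt\ran\,\lan k,\xi\ran^{2s}\,|\hat f(k,\xi)|^2\,d\xi.
\]
The estimates for $\|\vth_{\neq}\|_{L^2}$ and $\|(u_1)_{\neq}\|_{L^2}$ would then follow from the pointwise inequality
\[
\lan t\ran \leq C\,\lan k,\xi-kt\ran\,\lan k,\xi\ran^{2s}\qquad(k\neq 0,\ s>1/2),
\]
which I would establish by splitting cases: if $|\xi|\leq |kt|/2$ then $|\xi-kt|\geq |kt|/2\gtrsim\lan t\ran$ so the first factor on the right alone suffices; if $|\xi|>|kt|/2$ then $\lan k,\xi\ran\gtrsim|kt|\gtrsim\lan t\ran$ and $s\geq 1/2$ is enough for the $2s$-th power to absorb $\lan t\ran$. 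Integrating against $|\hat\vth|^2$ (resp.\ $|\hat u_1|^2\leq|\hat u|^2$) delivers the two $L^2$ estimates.

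For $\lan t\ran\|u_2\|_{L^2}$ I would first use the divergence-free constraint $k\hat u_1+(\xi-kt)\hat u_2=0$. The $k=0$ component forces $\xi\hat u_2(0,\xi)=0$, so in $L^2$ one has $u_2=(u_2)_{\neq}$; for $k\neq 0$ one derives the identity $|\hat u_2|=\frac{|k|}{|k,\xi-kt|}|\hat u|$. Substituting and reducing to the sharper pointwise bound
\[
\frac{k^2\lan t\ran^3}{|k,\xi-kt|^3}\leq C\,\lan k,\xi\ran^{2s}\qquad (k\neq 0,\ s>3/2)
\]
via the same casework yields the $u_2$ estimate; here $s>3/2$ is exactly what is needed in the second case, where $\lan k,\xi\ran^{2s}\gtrsim|kt|^{2s}$ must dominate $k^2\lan t\ran^3$.

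The main obstacle is the $L^1$ bound on $\hat u_2$, for which the pointwise argument breaks down. The plan is to apply Cauchy-Schwarz in $\xi$ to the divergence-free identity:
\[
\int_{\R}|\hat u_2|\,d\xi \leq A_k B_k,\ \ A_k^2 := \int_{\R}\frac{k^2\,d\xi}{|k,\xi-kt|^2\lan k,\xi-kt\ran\lan k,\xi\ran^{2s}},\ \ B_k^2 := \int_{\R}\lan k,\xi-kt\ran\lan k,\xi\ran^{2s}|\hat u|^2 d\xi.
\]
After the substitution $\eta=\xi-kt$, the elementary bounds $\lan k,\eta\ran\geq|k,\eta|$ and $\lan k,\eta+kt\ran^{2s}\geq|k,\eta+kt|^3$ (valid for $s\geq 3/2$, since $|k|\geq 1$ makes $|k,\eta+kt|\geq 1$) reduce $A_k^2$ to
\[
A_k^2 \lesssim \int_{\R}\frac{k^2\,d\eta}{|k,\eta|^3\,|k,\eta+kt|^3},
\]
which by Lemma~\ref{Lem4} applied with $a=b=|k|$, $z=-kt$, $\lambda=2$ is bounded by $C|k|^{-3}\lan t\ran^{-3}$. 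Summing in $k\neq 0$ gives $\sum_k A_k^2\leq C\lan t\ran^{-3}$, and a final Cauchy-Schwarz in $k$ with $\sum_k B_k^2=\|\lan\na_L\ran^{1/2}u_{\neq}\|_{H^s}^2$ produces $\|\hat u_2\|_{L^1}\leq C\lan t\ran^{-3/2}\|\lan\na_L\ran^{1/2}u_{\neq}\|_{H^s}$, completing the proof.
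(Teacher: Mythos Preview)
Your proposal is correct and follows essentially the same route as the paper: pointwise frequency inequalities on $\lan k,\xi\ran\lan k,\xi-kt\ran$ for the $L^2$ bounds, and Cauchy--Schwarz combined with Lemma~\ref{Lem4} (with $a=b=|k|$, $\lambda=2$) for $\|\hat u_2\|_{L^1}$. The only cosmetic difference is that the paper obtains the key pointwise bound via the single algebraic identity $(k^2+\xi^2)(k^2+(\xi-kt)^2)\geq(\xi k-k(\xi-kt))^2=k^4t^2$, whereas you split into cases $|\xi|\lessgtr|kt|/2$; both yield the same estimates.
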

\begin{proof}
For  $k\in \mathbb{Z}\setminus\{0\}$, $\xi\in\R$, we have\begin{align*}
(1+k^2+\xi^2)(1+k^2+(\xi-kt)^2)\geq& k^2+(\xi^2+k^2)(k^2+(\xi-kt)^2)\geq k^2+(\xi k-k(\xi-kt))^2\\
=&k^2+k^4t^2\geq k^2(1+t^2)\geq 1+t^2.
 \end{align*}Thus 
\begin{align*}
&|\hat{u}_1|^2\leq \f{(1+k^2+\xi^2)^s(1+k^2+(\xi-kt)^2)^\f12 }{ (1+t^2)^\f12}|\hat{u}_1|^2,\\
&|\hat{\vth}|^2\leq \f{(1+k^2+\xi^2)^s(1+k^2+(\xi-kt)^2)^\f12 }{ (1+t^2)^\f12}|\hat{\vth}|^2.
 \end{align*}
and {(using $k \hat{u}_1+(\xi-kt) \hat{u}_2=0$ as $\div_L u=0$)}
\beno
|\hat{u}_2|^2=\f{k^2}{k^2+(\xi-kt)^2} |\hat{u}|^2\leq \f{(1+k^2+\xi^2)^s(1+k^2+(\xi-kt)^2)^\f12}{(1+t^2)^\f32} |\hat{u}|^2.
\eeno
{Then by Plancherel' s formula, we get} \begin{align*}
       &\|(u_1)_{\neq}\|_{L^2}+\lan t\ran\|u_2\|_{L^2}\leq C\lan t\ran^{-\f12}\|\lan \nabla_L\ran^{\f12} u_{\neq}\|_{H^s},\quad  \|\vth_{\neq}\|_{L^2}\leq C\lan t\ran^{-\f12}\|\lan \nabla_L\ran^{\f12} \vth_{\neq}\|_{H^s}.
    \end{align*}
For the $L^1$ part, since $P_0 u_2=0$, we have
\begin{align*}
&\|\hat{u}_2\|_{L^1}=\sum_{k\in \mathbb{Z}\setminus\{0\}}\int_{\mathbb{R}}|\hat{u}_2| d\xi=\sum_{k\in \mathbb{Z}\setminus\{0\}}\int_{\mathbb{R}}\f{|k|}{(k^2+(\xi-kt)^2)^\f12}|\hat{u}| d\xi\non\\
\leq &\Big(\sum_{k\in \mathbb{Z}\setminus\{0\}}\int_{\mathbb{R}}(k^2+(\xi-kt)^2)^\f12(k^2+\xi^2)^s|\hat{u}|^2 d\xi\Big)^\f12\times\\&\Big(\sum_{k\in \mathbb{Z}\setminus\{0\}}\int_{\mathbb{R}}\f{|k|^2}{(k^2+\xi^2)^s(k^2+(\xi-kt)^2)^\f32} d\xi\Big)^\f12\non\\
\leq& \|\lan \na _L\ran^\f12 u_{\neq}\|_{H^s}\Big(\sum_{k\in \mathbb{Z}\setminus\{0\}} \f{1}{(k^2)^{s-\f32}} \int_\R \f{|k|^2}{(k^2+\xi^2)^{\f32}(k^2+(\xi-kt)^2)^{\f32}}d\xi\Big)^\f12\non\\
\leq& \|\lan \na _L\ran^\f12 u_{\neq}\|_{H^s}\big(\sum_{k\in \mathbb{Z}\setminus\{0\}} \f{1}{(k^2)^{s-\f32}} \f{C}{|2k,kt|^{3}}\Big)^\f12\leq C\lan t\ran^{-\f32}\|\lan \nabla_L\ran^{\f12} u_{\neq}\|_{H^s},
\end{align*}
where we used Lemma \ref{Lem4} (with $a=b=k,  z=kt, \lambda =2$) to
{deduce that}
\begin{equation*}
   \int_\R \f{|k|^{2}d\xi}{|k,\xi|^{3}|k, \xi-kt|^{3}}\leq\f{C}{|2k,kt|^{3}}.
\end{equation*}
\end{proof}
When $t\leq \kappa^{-\f16}$, we will use the following key proposition.\begin{proposition}\label{pr01}
If $\kappa\in(0, 1)$, $\g>\f12$, $s>\f32$ and $\f{\nu+\mu}{2\g\sqrt{\nu \mu} }< 2-\e$, $0<\e<1/\g$, then there exist constants $c_1>0$, $\widetilde C>0 $ (depending only on $\g, s, \e$) such that if $
\| (u_{in}, \vth_{in})\|_{H^{s+1/2}}\leq c_1\kappa^\f13,
$ then it holds that 
$\| \lan\na_L\ran^\f12 (u, \vth)(t)\|_{H^s }\le  {\widetilde C}\| (u_{in}, \vth_{in})\|_{H^{s+1/2}}$ for $0\leq t\leq T_0=\kappa^{-\f16}$.
\end{proposition}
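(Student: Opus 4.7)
The plan is to run a bootstrap on the coupled energy $D(t)$ from \eqref{energy1} with the short-time multiplier $m(t,k,\xi)=|k_+,\xi-kt|^{1/2}\lan k,\xi\ran^s e^{M_0(t,k,\xi)}$. Since $\g>\f12$, Cauchy-Schwarz with splitting parameter $\g$ applied to the cross term shows
\begin{align*}
(1-\tfrac{1}{2\g})\|m\hat u_1\|_{L^2}^2+\|m\hat u_2\|_{L^2}^2+\g(\g-\tfrac12)\|m\hat\vth\|_{L^2}^2\le D(t)\le 2\|m\hat u\|_{L^2}^2+2\g^2\|m\hat\vth\|_{L^2}^2,
\end{align*}
so $D(t)\sim\|\lan\na_L\ran^{1/2}(u,\vth)\|_{H^s}^2$ (provided that $\|M_0\|_{L^\infty}\lesssim 1$ on $[0,T_0]$, which I assume is part of the construction in Section 3). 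I set up the bootstrap hypothesis $D(t)\le 4\widetilde C^2\|(u_{in},\vth_{in})\|_{H^{s+1/2}}^2$ on some $[0,T]\subset[0,T_0]$ and improve the constant to $\widetilde C^2$.

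Next I compute $\f{d}{dt}D(t)$ using \eqref{eq: u new}. The linear pressure term vanishes against $\hat u$ by $\div_L u=0$. The contributions collect into: dissipation $-2\nu|k,\xi-kt|^2|m\hat u|^2-2\g^2\mu|k,\xi-kt|^2|m\hat\vth|^2$, the bad Couette term $-2\Re(m^2\hat u_2\overline{\hat u_1})$ from the reaction $(\hat u_2,0)$, and buoyancy/temperature cross-couplings $\pm\g^2\Re m^2\hat u_2\overline{\hat\vth}$ which cancel. Differentiating the coupled cross term $\Re\sum_k\int m^2\hat\vth\overline{\hat u_1}\,d\xi$ generates $+\Re m^2\hat u_2\overline{\hat u_1}$ (from $\partial_t\hat\vth\ni\hat u_2$) and several pressure/dissipation remainders. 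The residual $-\Re m^2\hat u_2\overline{\hat u_1}$ is absorbed by the multiplier growth $\partial_t |k_+,\xi-kt|^{1/2}/|k_+,\xi-kt|^{1/2}\sim -k(\xi-kt)/(2|k,\xi-kt|^2)$: using $k\hat u_1=-(\xi-kt)\hat u_2$ this equals $\Re m^2\hat u_2\overline{\hat u_1}$, giving the cancellation from \cite{RW}, whose sign requires exactly $\g^2>\f14$. The $\partial_t M_0$ term is chosen nonpositive and reserved to absorb transport commutators in the nonlinear step.

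For the nonlinear pieces $F=-u\cdot\na_L u-\na p_{NL}$ and $G=-u\cdot\na_L\vth$, Fourier turns them into convolutions carrying the transport weight $i(l,\eta-lt)$. The key tool is a multiplier convolution bound (Section 3) of schematic form
\begin{align*}
\f{m(t,k,\xi)}{m(t,k-l,\xi-\eta)\,m(t,l,\eta)}\cdot|l,\eta-lt|\lesssim\f{|l,\eta-lt|^{1/2}\lan l,\eta\ran^{s}}{\lan k-l,\xi-\eta\ran^{s}\,|k-l,\xi-\eta-(k-l)t|^{1/2}},
\end{align*}
together with the companion $e^{M_0}$ bound. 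Combined with $s>\f32$ and Lemma \ref{Lem4}, Plancherel plus Cauchy-Schwarz bounds each nonlinear contribution by $C\|m(u,\vth)\|_{L^2}\cdot D(t)$. Under the bootstrap, $\|m(u,\vth)\|_{L^2}\le 2\widetilde C c_1\kappa^{1/3}$, so $\f{d}{dt}D(t)\le C\widetilde C c_1\kappa^{1/3}D(t)$. Gronwall on $[0,T_0=\kappa^{-1/6}]$ yields $D(T_0)\le D(0)\,e^{C\widetilde C c_1\kappa^{1/6}}\le 2D(0)$ for $c_1$ small; since $D(0)\le C_\g\|(u_{in},\vth_{in})\|_{H^{s+1/2}}^2$, choosing $\widetilde C$ larger than $2\sqrt{C_\g}$ closes the bootstrap.

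The main obstacle is the multiplier convolution bound above, which must hold for all $k,l\in\Z$ and $\xi,\eta\in\R$ while losing the full transport derivative $|l,\eta-lt|$ and keeping $M_0(t,k,\xi)-M_0(t,k-l,\xi-\eta)-M_0(t,l,\eta)$ bounded across echo times $t\approx\xi/k$ for resonant $(k,l)$. The Section 3 construction of $M_0$ is tuned precisely so that $\|M_0\|_{L^\infty}$ stays $O(1)$ on $[0,T_0]$, which is what makes the short-time estimate Sobolev in scale and independent of $\kappa$.
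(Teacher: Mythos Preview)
Your linear analysis is on the right track, but the nonlinear step has a genuine gap: you never invoke the transport cancellation. The direct bound you write,
\[
\frac{m(t,k,\xi)}{m(t,k-l,\xi-\eta)\,m(t,l,\eta)}\cdot|l,\eta-lt|\lesssim(\text{integrable}),
\]
is false in the transport regime $\lan k-l,\xi-\eta\ran\ll\lan k,\xi\ran$. There $m(t,k,\xi)\sim m(t,l,\eta)$ and the left side reduces to $|l,\eta-lt|/m(t,k-l,\xi-\eta)\sim |l,\eta-lt|\lan k-l,\xi-\eta\ran^{-s}\lan k-l,\xi-\eta-(k-l)t\ran^{-1/2}$, which carries an uncontrolled factor $|l,\eta-lt|$ and cannot be summed. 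The paper instead symmetrizes: since $\Re\sum_k\int \widehat{u\cdot\na_L\vth}\,\overline{\hat\vth}=0$, swapping $(k,\xi)\leftrightarrow(l,\eta)$ and using $\hat u(k-l,\xi-\eta)=\overline{\hat u}(l-k,\eta-\xi)$ converts the trilinear form into one carrying the commutator symbols $lA_k^2-kA_l^2$ and $(\eta-lt)A_k^2-(\xi-kt)A_l^2$. These are what Lemma~\ref{Ak} actually bounds, and that lemma is the heart of Section~3; your schematic inequality is not proved there.

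Two further corrections. First, even after symmetrization the commutator bound in Lemma~\ref{Ak} costs a factor $\lan t\ran$, so the correct differential inequality is $\f{d}{dt}E(t)\le C\lan t\ran E(t)^{3/2}$, not $CE(t)^{3/2}$; your Gr\"onwall should read $E(t)^{1/2}\le E(0)^{1/2}/(1-C(t+t^2)E(0)^{1/2})$, and the $t^2$ at $t=T_0=\kappa^{-1/6}$ exactly cancels the $\kappa^{1/3}$ smallness (so the exponent is $O(c_1)$, not $O(c_1\kappa^{1/6})$). Second, $\p_tM_0$ is not ``reserved to absorb transport commutators in the nonlinear step'': it is a linear device. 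The term $-\p_tM_0=\f{C_\g k^2}{k^2+(\xi-kt)^2}$ produces the damping $2C_\g\|A_k\hat u_2\|_{L^2}^2$ and $2C_\g\g^2\|\lan\xi/k-t\ran^{-1}A_k\hat\vth\|_{L^2}^2$, which are needed to absorb the linear cross term $\mathcal{I}_2$ coming from $\p_t(A_k\hat\vth\cdot A_k\overline{\hat u_1})$; see \eqref{I2}. The nonlinear estimate uses only the boundedness $|M_0|\le\pi C_\g$ and the Lipschitz bound of Lemma~\ref{lem3.1}.
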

The key point is to show that $\f{d}{dt}E(t)\leq C\lan t\ran\|A_k(\hat{u},\hat{\vth})\|_{{L_{k,\xi}^2}}^3 $, see Proposition \ref{pr1}. Here 
\begin{align*}
&E(t):=\sum_{k\in \mathbb{Z}}\int_{\mathbb{R}} \Big(\big|A_k\hat{u}\big|^2+\g^2\big|A_k\hat{\vth}\big|^2+\Re (A_k\hat{\vth}A_k \overline{\hat{u}}_1)\Big) d\xi \sim \|A_k(\hat{u},\hat{\vth})\|_{{L_{k,\xi}^2}}^2\sim\| \lan\na_L\ran^\f12 (u, \vth)\|_{H^s }^2,\\
&A_k(t, \xi):=|k_+, \xi-kt|^\frac{1}{2} \lan k, \xi\ran^s e^{M_0(t, k, \xi)},\quad k_+=\max \{|k|, 1\},\\
&M_0(t, k, \xi):=-\int^t_0 \frac{C_{\g}k^2}{k^2+(\xi-k\tau)^2 } d\tau,\quad C_{\g}=\max\Big\{1,\f{2}{2\g-1}\Big\}.
\end{align*}

When $t\geq \kappa^{-\f16}$, we define the Multiplier $\mathcal{M}(t, k, \xi)$ as 
\begin{align*}
\mathcal{M}(t, k, \xi)= |k_+, \xi-kt|^\frac{1}{2} \lan k, \xi\ran^s\mathcal{A} e^{\mathcal{M}_0}, \ \text{where} \ \mathcal{M}_0=\mathcal{M}_1+\mathcal{M}_2+\mathcal{M}_3,
\end{align*}
with
\begin{align*}
&\mathcal{A}(t, k, \xi)=  e^{\epsilon\kappa^\f13 t\mathbf{1}_{k\neq 0}+\kappa^{-\f13} t^{-2}}, \quad \epsilon=\e/16, \\
&\mathcal{M}_1(t, k, \xi)={\psi_{1}}\big(\kappa^{\f13}|k|^{\f23}(\xi/k-t)\big),\ \text{for} \  k\neq 0,\quad \mathcal{M}_1(t, 0, \xi)
=0,\\
&\mathcal{M}_2(t,k, \xi)=C_{\g}\psi_{1-\delta}(\xi/k-t),\ \text{for} \  k\neq 0,\quad \mathcal{M}_2(t, 0, \xi)=0,\\
&\mathcal{M}_3(t, k, \xi)=\sum_{j\in \mathbb{Z}\setminus\{0\}}
j^{-1}\lan k-j\ran^{-\delta}\psi_{\delta}\Big(\f{\xi-tj}{\lan k-j\ran+|j|}\Big).
&\end{align*} 
Where 
for  $ \lambda>0$, $\psi_{\lambda}$ solves $\psi_{\lambda}'(x)=\lan x\ran^{-1-\lambda}$, $\psi_{\lambda}(0)=0$ (then $ \psi_{1}=\arctan$). We fix $ \delta$ such that $0<\delta<\min(s-3/2,1/2)$. 
Then $\psi_{\lambda},\ \mathcal{M}_1,\ \mathcal{M}_2,\ \mathcal{M}_3,\ \mathcal{M}_0 $ are bounded functions, and
\begin{align}\nonumber
 \Upsilon(t,k,\xi):&=-\pa_t\mathcal{M}_3(t,k,\xi)=\sum_{j\in \mathbb{Z}\setminus\{0\}}
\f{\lan k-j\ran^{-\delta}(\lan k-j\ran+|j|)^{\delta}}{|\lan k-j\ran+|j|,\xi-tj|^{1+\delta}}\\ \label{Up}
&=\sum_{l\in \mathbb{Z}\setminus\{k\}}
\f{\lan l\ran^{-\delta}(\lan l\ran+|k-l|)^{\delta}}{|\lan l\ran+|k-l|,\xi-t(k-l)|^{1+\delta}}.
\end{align}
Thus $0<\kappa^{-\f13} t^{-2}\leq 1$ and 
$\mathcal{M}(t, k, \xi)\sim |k_+, \xi-kt|^\frac{1}{2} \lan k, \xi\ran^s\mathcal{A}\sim \lan k, \xi-kt\ran^\frac{1}{2} \lan k, \xi\ran^se^{\epsilon\kappa^\f13 t\mathbf{1}_{k\neq 0}}$ for $t\geq T_0=\kappa^{-\f16}$. 

We will use the following key proposition.
\begin{proposition}\label{pr02}
If $\kappa\in(0, 1)$, $\g>\f12$, $s>\f32$ and $\f{\nu+\mu}{2\g\sqrt{\nu \mu} }< 2-\e$, $0<\e<1/\g$. Let\begin{align}\label{E*t}
&E_*(t):=\sum_{k\in \mathbb{Z}}\int_{\mathbb{R}} \Big(\big|\mathcal{M}\hat{u}\big|^2+\g^2\big|\mathcal{M}\hat{\vth}\big|^2+\Re (\mathcal{M}\hat{\vth}\mathcal{M} \overline{\hat{u}}_1)\Big) d\xi.
\end{align} Then there exists a constant $c_2>0$ (depending only on $\g, s, \e$) such that if $t\geq T_0=\kappa^{-\f16}$ $
\|\mathcal{M}(\hat{u}, \hat{\vth})(t)\|_{L^2_{k, \xi}}\leq c_2\kappa^\f13,
$ then it holds that 
$\f{d}{dt}E_*(t)\leq0$.
\end{proposition}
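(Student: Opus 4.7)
The plan is to differentiate $E_*(t)$ in time using the equations \eqref{eq: u new} and show the resulting right-hand side is pointwise non-positive under the hypotheses. Via Plancherel, $\f{d}{dt}E_*$ splits into four groups: (i) the linear diffusion terms $-2\nu\int|\na_L \mathcal{M}\hat u|^2\,d\xi$, $-2\g^2\mu\int|\na_L \mathcal{M}\hat\vth|^2\,d\xi$, and the mixed contribution $-(\nu+\mu)\Re\int|\na_L|^2\mathcal{M}\hat\vth\,\mathcal{M}\overline{\hat u_1}\,d\xi$; (ii) the multiplier--derivative terms $2(\pa_t\mathcal{M}/\mathcal{M})|\mathcal{M}\hat u|^2$ and analogues; (iii) the linear coupling terms produced by $(u_2,0)$, $\g^2\vth\,e_2$, the vertical transport $-u_2$ in the $\vth$ equation, and the linear part of the pressure $p_L$; and (iv) the trilinear nonlinear terms coming from $F$ and $G$. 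The mixed dissipation term in (i) is absorbed by the diagonal diffusion using Young's inequality, thanks to $\f{\nu+\mu}{2\g\sqrt{\nu\mu}}<2-\e$, leaving a definite positive reserve in both the $\nu$- and $\mu$-diffusion.

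The first key step is the linear cancellation. The divergence-free identity $k\hat u_1+(\xi-kt)\hat u_2=0$ gives $\Re(\hat u_2\overline{\hat u_1})=-\f{k(\xi-kt)}{k^2+(\xi-kt)^2}|\hat u|^2$, and the time derivative of the factor $|k_+,\xi-kt|^{1/2}$ in $\mathcal{M}$ contributes $-\f{k(\xi-kt)}{2(k^2+(\xi-kt)^2)}|\mathcal{M}\hat u|^2$; these two precisely cancel the bad term $-\Re\int\mathcal{M}^2\hat u_2\overline{\hat u_1}\,d\xi$ of (iii), exactly as in the commented linearization shown in the excerpt. The remaining linear coupling produced by the buoyancy $\g^2\vth\,e_2$ and the $u_2$-transport is absorbed by differentiating the cross term $\Re(\mathcal{M}\hat\vth\,\mathcal{M}\overline{\hat u_1})$ in $E_*$: with $C_\g\geq\max\{1,2/(2\g-1)\}$ the $\mathcal{M}_2$ piece of $\mathcal{M}_0$ produces exactly the positivity needed to close the coupled $\lan\na_L\ran^{1/2}$-energy structure, in the spirit of \cite{RW, YangLin}, using the Richardson condition $\g>1/2$.

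The second key step is extracting the enhanced dissipation needed to cover the exponentially growing weight $\mathcal{A}=e^{\epsilon\kappa^{1/3}t\mathbf{1}_{k\neq 0}+\kappa^{-1/3}t^{-2}}$. For $k\neq 0$,
\begin{equation*}
\f{\pa_t\mathcal{A}}{\mathcal{A}}=\epsilon\kappa^{1/3}-2\kappa^{-1/3}t^{-3},\qquad \pa_t\mathcal{M}_1=-\f{\kappa^{1/3}|k|^{2/3}}{\lan\kappa^{1/3}|k|^{2/3}(\xi/k-t)\ran^{2}},
\end{equation*}
and the standard AM--GM-type estimate
\begin{equation*}
\nu\big(k^2+(\xi-kt)^2\big)+\f{\kappa^{1/3}|k|^{2/3}}{\lan\kappa^{1/3}|k|^{2/3}(\xi/k-t)\ran^{2}}\gtrsim\kappa^{1/3}|k|^{2/3}\geq\kappa^{1/3}
\end{equation*}
shows that with $\epsilon=\e/16$ chosen small, a small fraction of the $\nu$-diffusion together with the negative $\pa_t\mathcal{M}_1$ absorbs $\epsilon\kappa^{1/3}|\mathcal{M}\hat u|^2\mathbf{1}_{k\neq 0}$, and similarly for $\hat\vth$ using the $\mu$-diffusion. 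The other multiplier contributions are already favorable: on $t\geq\kappa^{-1/6}$ one has $2\kappa^{-1/3}t^{-3}\leq 2\kappa^{1/6}$, while $\pa_t\mathcal{M}_2\leq 0$ (after its use in step~1) and $\pa_t\mathcal{M}_3=-\Upsilon\leq 0$ by \eqref{Up}.

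The final step is the nonlinear block. Expanding each trilinear form (from $u\cdot\na_L u$, $\na p_{NL}$, and $u\cdot\na_L\vth$) as a convolution in frequency, I would apply the multiplier bounds from Section~5 to redistribute the outer weight $\mathcal{M}(t,k,\xi)$ between the two input copies of $\mathcal{M}$, producing trilinear estimates of the shape $\leq C\|\mathcal{M}(\hat u,\hat\vth)\|_{L^2_{k,\xi}}\cdot(\text{dissipation-level quantities})$. The smallness hypothesis $\|\mathcal{M}(\hat u,\hat\vth)(t)\|_{L^2_{k,\xi}}\leq c_2\kappa^{1/3}$, with $c_2$ small, then lets us absorb every nonlinear contribution into the surplus dissipation preserved in steps~1 and~3. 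The main technical obstacle, which I expect to occupy the bulk of the work, is handling the $\mathcal{M}_3$-commutator in the nonlinearity: cross-frequency interactions coming from the echo multiplier require the $\lan l\ran^{-\delta}$ gain encoded in \eqref{Up}, combined with the Poisson-type bound of Lemma~\ref{Lem4}, to furnish enough decay to sum in $l\in\mathbb{Z}\setminus\{k\}$.
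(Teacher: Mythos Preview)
Your proposal is correct and follows essentially the same route as the paper: Proposition~\ref{pr02} is an immediate corollary of the differential inequality in Proposition~\ref{prop: long-time i}, whose proof is organized exactly as your four blocks (mixed-diffusion absorption via $\tfrac{\nu+\mu}{2\g\sqrt{\nu\mu}}<2-\e$; linear cancellation from the $|k_+,\xi-kt|^{1/2}$ weight, the cross term, and $\mathcal{M}_2$ with constant $C_\g$; enhanced dissipation \eqref{nu1} from $\mathcal{M}_1$ plus diffusion; and the trilinear nonlinear bounds via Lemmas~\ref{lem2}--\ref{lem7}). One sharpening worth noting: the $-2\kappa^{-1/3}t^{-3}$ piece of $\partial_t\mathcal{A}/\mathcal{A}$ is not merely bounded but retained as a genuine dissipation term on the left of \eqref{continuity}, and it is precisely what absorbs the $\kappa^{-2/3}t^{-3}\|\mathcal{M}\hat u\|_{L^2}^2$ nonlinear contribution that appears when the $e^{\epsilon\kappa^{1/3}t}$ factor in \eqref{M2} is traded for the inviscid-damping decay $\|\hat u_2\|_{L^1}\lesssim t^{-3/2}\|\mathcal{M}\hat u\|_{L^2}$ from Lemma~\ref{Lem3}.
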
The key point is to show that (see Proposition \ref{prop: long-time i}, here $\e_1=\e/4$)
\begin{align}\label{continuity}
        &\f{d}{dt}E_*(t)+\e_1(\|\lan \xi/k-t\ran^{\f{\delta}{2}}\mathcal{M}\hat{u}_2\|_{L_{k,\xi}^2}^2+ \kappa^\f13\||k|^{\f13}\mathcal{M}(\hat{u},\hat{\vth})\|_{L_{k,\xi}^2}^2+\|\sqrt{\Upsilon}\mathcal{M}(\hat{u},\hat{\vth})\|_{L_{k,\xi}^2}^2\non\\
       &+\kappa^{-\f13} t^{-3}  \|\mathcal{M} (\hat{u}, \hat{\vth})\|_{L_{k,\xi}^2}^2
       + \nu\||k,\xi-kt|\mathcal{M}(\hat{u},\hat{\vth})\|_{L_{k,\xi}^2}^2)\non\\
          \leq &C\|\mathcal{M}(\hat{u},\hat{\vth})\|_{{L_{k,\xi}^2}}(\||k|^{\f13}\mathcal{M}(\hat{u},\hat{\vth})\|_{L_{k,\xi}^2}^2+\kappa^{\f23}\||k,\xi-kt|\mathcal{M}(\hat{u},\hat{\vth})\|_{L_{k,\xi}^2}^2+\kappa^{-\f23}t^{-3}\|\mathcal{M}\hat{u}\|_{L_{k,\xi}^2}^2\non\\
          &+ \kappa^{-\f13}\|\lan \xi/k-t\ran^{\f{\delta}{2}}\mathcal{M}\hat{u}_2\|_{L_{k,\xi}^2}^2+\kappa^{-\f13}\|\sqrt{\Upsilon}\mathcal{M}(\hat{u},\hat{\vth})\|_{L_{k,\xi}^2}^2).
\end{align} Now we are in a position to prove Theorem 1.1, by implementing Propositions \ref{pr01}
 and \ref{pr02}.
\begin{proof}
If $\|(u_{in}, \vth_{in})\|_{H^{s+1/2}}\leq c \kappa^{\f13}\leq c_1 \kappa^{\f13}$.
{ By Propositions \ref{pr01}, 
\begin{align}\label{T<1}\| \lan\na_L\ran^\f12 (u, \vth)(t)\|_{H^s }\le  \widetilde C\| (u_{in}, \vth_{in})\|_{H^{s+1/2}}\leq   \widetilde C c \kappa^{\f13}, \ \text{for} \  0\leq t\leq T_0=\kappa^{-\f16}.\end{align}
As $\mathcal{M}(t, k, \xi)\sim \lan k, \xi-kt\ran^\frac{1}{2} \lan k, \xi\ran^se^{\epsilon\kappa^\f13 t\mathbf{1}_{k\neq 0}}$ for $t\geq T_0=\kappa^{-\f16}$, we have
\begin{align}\label{T<}
\|\mathcal{M} (u, \vth)(t)\|_{L^2}\leq& C_* e^{\epsilon \kappa^\f13 t} \|\lan\na_L\ran^\f12 (u, \vth)(t)\|_{H^s}
\leq 2C_*\widetilde C\|(u_{in}, \vth_{in})\|_{H^{s+1/2}}\non\\ \leq& 2C_*\widetilde C  c\kappa^{\f13},\quad 
\text{for}\ t= T_0=\kappa^{-\f16},\end{align}
where we used that $\epsilon \kappa^\f13 T_0=\epsilon \kappa^\f16\leq \epsilon=\f{\e}{16}\leq\ln 2$. 
Thus if $0<c\leq\min\{c_1, c_2/(4C_*\widetilde C)\}$ then $\|\mathcal{M} (u, \vth)(T_0)\|_{L^2}\leq c_2\kappa^\f13/2$.

For $t\geq T_0$, we use the bootstrap argument. Assume 
$$ \|\mathcal{M}(u, \vth)(t)\|_{L^2}\leq c_2\kappa^\f13,\   \text{for} \  T_0\leq t\leq T,$$
by Propositions \ref{pr02}, we have $E_*(t)\leq E_*(T_0)$  for $ T_0\leq t\leq T$. As $\g>\f12$, by \eqref{E*t} we have
$$
C_{1,\g}^{-1} \|\mathcal{M} (u, \vth)(t)\|_{L^2}^2\leq E_*(t)\leq C_{1,\g} \|\mathcal{M} (u, \vth)(t)\|_{L^2}^2,
$$
where $ C_{1,\g}>1$ is a constant depending only on $\g$. Then
\begin{align*}
\|\mathcal{M} (u, \vth)(t)\|^2_{L^2}&\leq C_{1,\g} E_*(t)\leq C_{1,\g}  E_*(T_0)\leq C_{1,\g}^2 \|\mathcal{M} (u, \vth)\|_{L^2}^2\Big|_{t=T_0}\\
&\leq C_{1,\g}^2 (2C_*\widetilde Cc\kappa^{\f13})^2\leq ({c_2}\kappa^{\f13}/2)^2, \ 
 \text{for} \  T_0\leq t\leq T.
 \end{align*}
where $c=\min\{c_1, {c_2}/({4C_*\widetilde C C_{1,\g}}) \}$.}
 
Then the bootstrap argument implies $T=+\infty$, and
$$
\|\mathcal{M} (u, \vth)(t)\|_{L^2}\leq C_{1,\g}\|\mathcal{M} (u, \vth)\|_{L^2}\Big|_{t=T_0}, \quad 
 \text{for} \  t\geq T_0,$$
which along with \eqref{T<} and $\mathcal{M}(t, k, \xi)\sim \lan k, \xi-kt\ran^\frac{1}{2} \lan k, \xi\ran^se^{\epsilon\kappa^\f13 t\mathbf{1}_{k\neq 0}}$ for $t\geq T_0$ gives that
\begin{align}\non
&e^{\epsilon\kappa^\f13 t}\|\lan \na_L\ran^\f12 (u_{\neq}, \vth_{\neq})(t)\|_{H^s}+\|\lan \na_L\ran^\f12 (u, \vth)(t)\|_{H^s}
\leq C\|\mathcal{M} (u, \vth)(t)\|_{L^2}\\ \label{T>}&\leq C\|(u_{in}, \vth_{in})\|_{H^{s+1/2}}, \quad 
 \text{for} \  t\geq T_0. \end{align}
 Then the global stability estimate \eqref{stability} and the enhanced dissipation  estimate \eqref{3} follows from \eqref{T<1}, \eqref{T>} and 
 $\epsilon \kappa^\f13 T_0\leq\ln 2$. Moreover the  inviscid damping estimate \eqref{2} follows from the enhanced dissipation  estimate \eqref{3} and Lemma \ref{Lem3}.
          \end{proof}

\smallskip

\section{   Property of the multiplier $A_k$ in small time scale $t<\kappa^{-\frac{1}{6}}$}
\setcounter{equation}{0}
Recall the multiplier used when $t\leq T_0= \kappa ^{-\f16}$:
\begin{align*}
 A_k(t, \xi)&=|k_+, \xi-kt|^\frac{1}{2} \lan k, \xi\ran^s e^{M_0(t, k, \xi)},\quad k_+=\max \{|k|, 1\},
\\
M_0(t, k, \xi)&=-\int^t_0 \frac{C_{\g}k^2}{k^2+(\xi-k\tau)^2 } d\tau,
\end{align*}
then $M_0(t, k, \xi) $ is well defined and smooth for $ (k,\xi)\in\R^2\setminus\{(0,0)\}$. 
We notice that  $-\pi C_{\g}\leq M_0(t, k, \xi)\leq0$, then 
$A_k(t, \xi)\sim|k_+, \xi-kt|^\frac{1}{2} \lan k, \xi\ran^s \sim\lan k, \xi-kt\ran^\frac{1}{2} \lan k, \xi\ran^s$.
 \begin{lemma}\label{lem3.2}
If $(k,\xi)\in\R^2\setminus\{(0,0)\}$, $t\geq0$ then
\begin{align}\label{M0a}
|\p_k M_0(t, k, \xi), \p_\xi M_0(t, k, \xi)|\leq \f{ C\lan t\ran }{| k, \xi-kt|}.
\end{align}
\end{lemma}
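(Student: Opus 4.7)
The plan is to collapse the time integral defining $M_0$ to an endpoint difference by spotting a hidden $\p_\tau$-structure in the integrand. For $k\neq 0$ and $\tau\in[0,t]$ a direct computation shows
$$
\p_\xi\Big[\f{k^2}{k^2+(\xi-k\tau)^2}\Big]=-k\,\p_\tau\Big[\f{1}{k^2+(\xi-k\tau)^2}\Big],\qquad \p_k\Big[\f{k^2}{k^2+(\xi-k\tau)^2}\Big]=\xi\,\p_\tau\Big[\f{1}{k^2+(\xi-k\tau)^2}\Big].
$$
Differentiating under the integral sign in the definition of $M_0$ and applying the fundamental theorem of calculus in $\tau$ then yields the closed forms
$$
\p_\xi M_0=C_{\g}k\Big[\f{1}{k^2+(\xi-kt)^2}-\f{1}{k^2+\xi^2}\Big],\qquad \p_k M_0=-C_{\g}\xi\Big[\f{1}{k^2+(\xi-kt)^2}-\f{1}{k^2+\xi^2}\Big].
$$
The case $k=0$ is trivial since $M_0\equiv 0$ there.

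Next I would estimate each of these four elementary terms. For the terms with $|k|$ in the numerator one uses $|k|\leq|k,a|$ for $a\in\R$, giving $\f{|k|}{|k,\xi-kt|^2}\leq\f{1}{|k,\xi-kt|}$ and $\f{|k|}{|k,\xi|^2}\leq\f{1}{|k,\xi|}$. For the $|\xi|$ terms, the triangle inequality $|\xi|\leq|\xi-kt|+|k|t\leq(1+t)|k,\xi-kt|$ produces $\f{|\xi|}{|k,\xi-kt|^2}\leq\f{1+t}{|k,\xi-kt|}$, while $|\xi|\leq|k,\xi|$ handles $\f{|\xi|}{|k,\xi|^2}\leq\f{1}{|k,\xi|}$. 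The final piece is the comparison $k^2+(\xi-kt)^2\leq 2\lan t\ran^2(k^2+\xi^2)$, which follows from $(\xi-kt)^2\leq 2\xi^2+2k^2t^2$ and gives $\f{1}{|k,\xi|}\leq\f{\sqrt{2}\lan t\ran}{|k,\xi-kt|}$. Assembling these inequalities yields $|\p_\xi M_0|,|\p_k M_0|\leq\f{C\lan t\ran}{|k,\xi-kt|}$, as claimed.

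The main (and essentially only) obstacle is recognizing the telescoping $\p_\tau$-structure that collapses each integral to a difference of boundary values; once this is done, every subsequent inequality is routine triangle-inequality bookkeeping. A cleaner equivalent is to substitute $u=\xi/k-\tau$ for $k\neq 0$, rewrite $M_0=C_{\g}[\arctan(\xi/k-t)-\arctan(\xi/k)]$, and differentiate directly, producing the same four rational expressions; the integration-by-parts viewpoint is preferable only because it absorbs the degenerate line $k=0$ without a separate argument.
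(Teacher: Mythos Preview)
Your proposal is correct and takes essentially the same approach as the paper: both derive the identical closed forms $\p_\xi M_0=C_\g k\big[\tfrac{1}{k^2+(\xi-kt)^2}-\tfrac{1}{k^2+\xi^2}\big]$, $\p_k M_0=-C_\g\xi\big[\cdots\big]$ (the paper via the $\arctan$ formula you mention at the end, you via the $\p_\tau$-telescoping) and then bound them. The only cosmetic difference is in the final estimate: the paper combines the two components at once using $|k\xi+k(\xi-kt)|\le|k,\xi||k,\xi-kt|$ to get the slightly sharper constant $C_\g t/|k,\xi-kt|$, whereas you bound the four elementary terms separately, which is a little longer but perfectly valid.
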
\begin{proof} 
For $k\neq0$ we have\begin{align*}
M_0(t, k, \xi)&=C_\g(\arctan({\xi}/{k}-t)-\arctan({\xi}/{k})),
\end{align*}
then
\begin{align*}
&|\p_k M_0(t, k, \xi), \p_\xi M_0(t, k, \xi)|= C_{\g}\Big| \f{\xi}{k^2+\xi^2}-\f{\xi}{k^2+(\xi-kt)^2},\f{k}{k^2+\xi^2}-\f{k}{k^2+(\xi-kt)^2}\Big|\\
= & C_\g\f{|k,\xi||(\xi-kt)^2-\xi^2| }{(k^2+\xi^2)(k^2+(\xi-kt)^2)}
=  C_\g |k,\xi|\f{t|\xi k+k(\xi-kt)|}{|k,\xi|^2| k, \xi-kt|^2}\\
\leq & C_\g |k,\xi|\f{t|\xi,k||k,\xi-kt|}{|k,\xi|^2| k, \xi-kt|^2}
= \f{ C_\g t}{| k, \xi-kt|},
\end{align*}which implies \eqref{M0a}. The case $k=0$ follows by taking limit.
\end{proof}
 \begin{lemma}\label{lem3.1}
If $(k, \xi),(l, \eta)\in\mathbb{Z}\times\R$, $t\geq0$ then
\begin{align}\label{M0}
|M_0(t, k, \xi)-M_0(t, l, \eta)|\leq  C\f{ \lan t\ran |k-l, \xi-\eta|}{\lan k, \xi-kt\ran+\lan l, \eta-lt\ran}.
\end{align}
\end{lemma}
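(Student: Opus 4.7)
The plan is to reduce the estimate to a straight-line integration along the path $(k_s, \xi_s) = (1-s)(l, \eta) + s(k, \xi)$, $s \in [0,1]$, and to invoke the gradient bound of Lemma~\ref{lem3.2}. The fundamental theorem of calculus combined with Cauchy--Schwarz gives
\[
|M_0(t, k, \xi) - M_0(t, l, \eta)| \leq |k - l, \xi - \eta| \int_0^1 |\nabla M_0|(t, k_s, \xi_s)\, ds,
\]
and Lemma~\ref{lem3.2} bounds the integrand by $C\langle t\rangle/|k_s, \xi_s - k_s t|$ whenever $(k_s, \xi_s) \neq (0, 0)$. The goal is thus reduced to showing
\[
 \int_0^1 \frac{ds}{|k_s, \xi_s - k_s t|} \leq \frac{C}{\langle k, \xi - kt\rangle + \langle l, \eta - lt\rangle}.
\]

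To handle the potential singularity of the integrand, I would first exploit that $|M_0| \leq \pi C_\g$ (immediate from the explicit formula $M_0 = C_\g(\arctan(\xi/k - t) - \arctan(\xi/k))$). This means \eqref{M0} is automatic whenever $\langle t\rangle|k - l, \xi - \eta| \geq \delta^{-1}(\langle k, \xi - kt\rangle + \langle l, \eta - lt\rangle)$ for any fixed small $\delta > 0$. So it suffices to work in the \emph{thin-triangle regime}
\[
\langle t\rangle |k - l, \xi - \eta| \leq \delta\bigl(\langle k, \xi - kt\rangle + \langle l, \eta - lt\rangle\bigr).
\]
Writing $\vec a := (l, \eta - lt)$, $\vec b := (k, \xi - kt)$, so that $(k_s, \xi_s - k_s t) = (1-s)\vec a + s\vec b$, the identity $(\xi - kt) - (\eta - lt) = (\xi - \eta) - (k-l)t$ yields
\[
|\vec b - \vec a|^2 \leq 2(k - l)^2(1 + t^2) + 2(\xi - \eta)^2 \leq 2\langle t\rangle^2 |k - l, \xi - \eta|^2,
\]
so in the thin-triangle regime $|\vec b - \vec a| \leq c_0(\langle \vec a\rangle + \langle \vec b\rangle)$ with $c_0$ arbitrarily small (by shrinking $\delta$).

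For integer $k, l$ with $kl \neq 0$ we have $|\vec a|, |\vec b| \geq 1$, so $\langle\vec a\rangle + \langle\vec b\rangle \leq 2(|\vec a| + |\vec b|)$. The smallness of $|\vec b - \vec a|$ then forces $|\vec a|, |\vec b|$ to be of comparable size and $\vec a \cdot \vec b \geq c_1|\vec a||\vec b|$ via $\vec a \cdot \vec b = \tfrac12(|\vec a|^2 + |\vec b|^2 - |\vec b - \vec a|^2)$. Projecting $(1-s)\vec a + s\vec b$ onto the unit vector $(\vec a + \vec b)/|\vec a + \vec b|$ then gives $|(1-s)\vec a + s\vec b| \geq c_2(|\vec a| + |\vec b|) \gtrsim \langle\vec a\rangle + \langle\vec b\rangle$ uniformly in $s \in [0, 1]$, yielding the desired bound on the arclength integral. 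The edge case $k = 0$, $l \neq 0$ (and the symmetric one) uses $M_0(t, 0, \xi) = 0$: the thin-triangle condition with $c_0$ small either forces $|\vec b| = |\xi| \geq 1$ (and the same straight-line argument applies) or else forces both $|\vec a|$ and $|\vec b|$ to be bounded, in which case the estimate reduces to an elementary direct check using $|M_0(t, l, \eta)| \leq C_\g \min(\pi, t)$.

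\textbf{The main obstacle} is the uniform lower bound $|(1-s)\vec a + s\vec b| \gtrsim \langle\vec a\rangle + \langle\vec b\rangle$ along the straight-line segment: Lemma~\ref{lem3.2} becomes singular at the origin in the $(k, \xi - kt)$-plane, so one must rule out the path passing through a neighborhood of it. The integer constraint on $k, l$ (forcing $|\vec a|, |\vec b| \geq 1$ when the corresponding index is nonzero), combined with the thin-triangle smallness, is precisely what closes this gap.
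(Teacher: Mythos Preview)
Your proof is correct and follows essentially the same two-case strategy as the paper: use the uniform bound $|M_0|\leq \pi C_\g$ in the ``far'' regime, and integrate the gradient bound of Lemma~\ref{lem3.2} along the straight line in the ``near'' regime. The paper's execution is slightly cleaner: it splits on whether $|\vec b-\vec a|=|k-l,\xi-\eta-(k-l)t|$ exceeds $\tfrac14(|\vec a|+|\vec b|)$, and in the small case gets the path lower bound by simply adding the two triangle inequalities $|(1-s)\vec a+s\vec b|\geq|\vec a|-s|\vec b-\vec a|$ and $|(1-s)\vec a+s\vec b|\geq|\vec b|-(1-s)|\vec b-\vec a|$, which immediately yields $|(1-s)\vec a+s\vec b|\geq\tfrac38(|\vec a|+|\vec b|)$ and avoids both your dot-product/projection argument and the separate discussion of $k=0,\ l\neq0$ (only $k=l=0$ is isolated).
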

\begin{proof}
If $(k,l)=(0,0)$ then $M_0(t, k, \xi)=M_0(t, l, \eta)=0$, \eqref{M0} is clearly true. Now we assume $(k,l)\neq(0,0)$ then 
$$\lan k, \xi-kt\ran+\lan l, \eta-lt\ran\leq3( | k, \xi-kt|+| l, \eta-lt|).$$

\textbf{Case 1:}  
$| k-l,\xi-\eta-(k-l)t|\geq (| k, \xi-kt|+| l, \eta-lt|)/4$. 
In this case\begin{align*}
| k, \xi-kt|+| l, \eta-lt|\leq 4| k-l,\xi-\eta-(k-l)t|\leq  4( t+1)\ran | k-l,\xi-\eta|.
\end{align*} Then, by $-\pi C_{\g}\leq M_0(t, k, \xi)\leq0$, we have
\begin{align*}
|M_0(t, k, \xi)-M_0(t, l, \eta)|\leq { \pi} C_{\g}\leq C\f{ \lan t\ran |k-l, \xi-\eta|}{| k, \xi-kt|+| l, \eta-lt|}\leq C\f{ \lan t\ran |k-l, \xi-\eta|}{\lan k, \xi-kt\ran+\lan l, \eta-lt\ran}.
\end{align*}

\textbf{Case 2:}  
$| k-l,\xi-\eta-(k-l)t|\leq (| k, \xi-kt|+|l, \eta-lt|)/4$. By Taylor's formula,\begin{align*}
&|M_0(t, k, \xi)-M_0(t, l, \eta)|\\
\leq& \int_0^1|\p_k M_0(t, k-\tau(k-l), \xi-\tau(\xi-\eta)),\p_\xi M_0(t, k-\tau(k-l), \xi-\tau(\xi-\eta))||k-l, \xi-\eta|d\tau\\
\leq& \int_0^1\f{ C\lan t\ran |k-l, \xi-\eta|}{| k-\tau(k-l), \xi-\tau(\xi-\eta)-(k-\tau(k-l))t|}d\tau\leq C\f{ \lan t\ran |k-l, \xi-\eta|}{\lan k, \xi-kt\ran+\lan l, \eta-lt\ran}.
\end{align*}
Here we used that (for $\tau\in[0,1]$)
\begin{align*}
&| k-\tau(k-l), \xi-\tau(\xi-\eta)-(k-\tau(k-l))t|\geq | k, \xi-kt|-\tau| k-l,\xi-\eta-(k-l)t|,\\
&| k-\tau(k-l), \xi-\tau(\xi-\eta)-(k-\tau(k-l))t|\geq |l, \eta-lt|-(1-\tau)| k-l,\xi-\eta-(k-l)t|,\end{align*}
and that (adding the above 2 inequalities)
\begin{align*}
&2| k-\tau(k-l), \xi-\tau(\xi-\eta)-(k-\tau(k-l))t|\\ 
\geq& | k, \xi-kt|+|l, \eta-lt|-| k-l,\xi-\eta-(k-l)t|\\ \geq&| k, \xi-kt|+|l, \eta-lt|-(| k, \xi-kt|+|l, \eta-lt|)/4\\=&
3(| k, \xi-kt|+|l, \eta-lt|)/4\geq(\lan k, \xi-kt\ran+\lan l, \eta-lt\ran)/4.
\end{align*}
\end{proof}
%
%

\begin{lemma}\label{Ak}If $(k, \xi),(l, \eta)\in\mathbb{Z}\times\R$, $t\geq0$ then
\begin{align}\label{Ak1}
&|l A_k^2(t, \xi)-kA_l^2(t, \eta)|+|(\eta-lt) A_k^2(t, \xi)-(\xi-kt) A_l^2(t, \eta)|\non\\
\leq& C\lan t\ran A_k(t, \xi) A_l(t, \eta)A_{k-l}(t, \xi-\eta) (\lan k, \xi\ran^{\frac{1}{2}-s}+\lan l, \eta\ran^{\frac{1}{2}-s}+\lan k-l, \xi-\eta\ran^{\frac{1}{2}-s}\non\\&+\lan k-l, (\xi-\eta)-(k-l)t\ran^{\frac{1}{2}-s}),
\end{align}
and
\begin{align}\label{Ak2}
&\lan l,\eta-lt \ran A_k^2 (t, \xi)\f{|k||k-l|}{k^2+(\xi-kt)^2}+\lan k,\xi-kt \ran A_l^2 (t, \eta)\f{|l||k-l|}{l^2+(\eta-lt)^2}\non\\
\leq& C\lan t\ran A_k(t, \xi) A_l(t, \eta)A_{k-l}(t, \xi-\eta)(\lan k, \xi\ran^{\frac{1}{2}-s}+\lan l, \eta\ran^{\frac{1}{2}-s}+\lan k-l, \xi-\eta\ran^{\frac{1}{2}-s}).
\end{align}
\end{lemma}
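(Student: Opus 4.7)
Since $e^{M_0}\in[e^{-\pi C_\gamma},1]$ is uniformly bounded, $A_k(t,\xi)$ is comparable (up to a universal constant) to $|k_+,\xi-kt|^{1/2}\lan k,\xi\rangle^s$, so the inequalities are really about polynomial-weight combinations. A convenient auxiliary estimate I would establish first is the product bound $A_k(t,\xi)\leq C A_l(t,\eta)A_{k-l}(t,\xi-\eta)$, which follows from $|k_+,\xi-kt|\leq 2(|l_+,\eta-lt|+|(k-l)_+,(\xi-\eta)-(k-l)t|)\leq 4|l_+,\eta-lt|\cdot|(k-l)_+,(\xi-\eta)-(k-l)t|$ (using $|l_+,\cdot|,|(k-l)_+,\cdot|\geq 1$) together with the Peetre inequality $\lan k,\xi\rangle^s\leq 2^s\lan l,\eta\rangle^s\lan k-l,\xi-\eta\rangle^s$.

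\textbf{Proof of \eqref{Ak1}.} The basic tool is the factorization
$$\alpha A_k^2(t,\xi)-\beta A_l^2(t,\eta)=(\alpha-\beta)A_k(t,\xi)A_l(t,\eta)+\bigl(\alpha A_k(t,\xi)+\beta A_l(t,\eta)\bigr)\bigl(A_k(t,\xi)-A_l(t,\eta)\bigr),$$
applied with $(\alpha,\beta)=(l,k)$ and with $(\alpha,\beta)=(\eta-lt,\xi-kt)$. The commutator pieces $(\alpha-\beta)A_kA_l$ are controlled by the elementary bounds $|l-k|\leq\lan k-l,\xi-\eta\rangle$, $|\eta-\xi-(l-k)t|\leq C\lan t\rangle\lan k-l,\xi-\eta\rangle$, together with the rearrangement $\lan m,\zeta\rangle\leq C\lan t\rangle|m_+,\zeta-mt|$ (obtained by splitting $\zeta=(\zeta-mt)+mt$); these contribute the RHS weight $\lan k-l,\xi-\eta\rangle^{1/2-s}$.

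The Lipschitz pieces $(\alpha A_k+\beta A_l)(A_k-A_l)$ require an estimate on $|A_k-A_l|$, which I would obtain by a three-term telescoping along the defining factors $|k_+,\xi-kt|^{1/2}$, $\lan k,\xi\rangle^s$, and $e^{M_0}$:
\begin{align*}
A_k(t,\xi)-A_l(t,\eta)=&\bigl(|k_+,\xi-kt|^{1/2}-|l_+,\eta-lt|^{1/2}\bigr)\lan k,\xi\rangle^s e^{M_0(t,k,\xi)}\\
&+|l_+,\eta-lt|^{1/2}\bigl(\lan k,\xi\rangle^s-\lan l,\eta\rangle^s\bigr)e^{M_0(t,k,\xi)}\\
&+|l_+,\eta-lt|^{1/2}\lan l,\eta\rangle^s\bigl(e^{M_0(t,k,\xi)}-e^{M_0(t,l,\eta)}\bigr).
\end{align*}
The three differences are controlled respectively by $|\sqrt{a}-\sqrt{b}|\leq\sqrt{|a-b|}$ applied to $|\cdot,\cdot|$ (which is precisely what produces the fourth RHS weight $\lan k-l,(\xi-\eta)-(k-l)t\rangle^{1/2-s}$), by the mean-value bound $|\lan k,\xi\rangle^s-\lan l,\eta\rangle^s|\lesssim|k-l,\xi-\eta|(\lan k,\xi\rangle^{s-1}+\lan l,\eta\rangle^{s-1})$, and by Lemma~\ref{lem3.1} combined with $|e^a-e^b|\leq|a-b|$ for $a,b\leq 0$. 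The resulting products with the prefactors $|lA_k+kA_l|\leq\lan l,\eta\rangle A_k+\lan k,\xi\rangle A_l$ or $|(\eta-lt)A_k+(\xi-kt)A_l|\leq\lan l,\eta-lt\rangle A_k+\lan k,\xi-kt\rangle A_l$ are then distributed across the four candidate weights via the low-high/high-low Sobolev dichotomy: one of $\lan l,\eta\rangle$, $\lan k-l,\xi-\eta\rangle$ is at least $\lan k,\xi\rangle/2$, allowing the $\lan k,\xi\rangle^s$-loss to be absorbed into $A_l$ or $A_{k-l}$ via the product bound from the opening paragraph.

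\textbf{Proof of \eqref{Ak2}.} The factor $\tfrac{|k||k-l|}{k^2+(\xi-kt)^2}$ vanishes when $k=0$; for $k\neq 0$ one has $|k_+,\xi-kt|=|k,\xi-kt|$ and $\tfrac{|k|}{|k,\xi-kt|^2}\leq\tfrac{1}{|k,\xi-kt|}$, so $A_k^2\cdot\tfrac{|k||k-l|}{|k,\xi-kt|^2}\leq|k-l|A_k\lan k,\xi\rangle^s$ after cancelling one factor of $|k,\xi-kt|^{1/2}$. Using $\lan l,\eta-lt\rangle\leq C\lan t\rangle\lan l,\eta\rangle$, $|k-l|\leq\lan k-l,\xi-\eta\rangle$ and the product estimate $A_k\lesssim A_lA_{k-l}$ reduces the first summand of \eqref{Ak2} to a pure Sobolev inequality of the form $\lan l,\eta\rangle\lan k-l,\xi-\eta\rangle\lan k,\xi\rangle^s\lesssim A_lA_{k-l}\sum\lan\cdot\rangle^{1/2-s}$, which is dispatched by the same low-high/high-low dichotomy as in \eqref{Ak1}; the second summand is symmetric under $(k,\xi)\leftrightarrow(l,\eta)$. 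Note that the fourth candidate weight $\lan k-l,(\xi-\eta)-(k-l)t\rangle^{1/2-s}$ is absent here because no square-root-difference enters the argument. The main obstacle throughout is the careful bookkeeping of the Sobolev exponents in the Lipschitz estimate for \eqref{Ak1}; the dichotomy makes this tractable, but verifying the cancellations for $s>3/2$ is the delicate step.
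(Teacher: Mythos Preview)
Your overall strategy (telescoping plus a low-high/high-low split) matches the paper's, but two of the concrete inequalities you invoke are too crude and cause genuine failures.

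\textbf{For \eqref{Ak1}.} The bound $|\sqrt{a}-\sqrt{b}|\leq\sqrt{|a-b|}$ does not close in the regime $\lan k-l,\xi-\eta\ran\ll\lan k,\xi\ran$. Take $l=k-1$, $\xi=\eta=kt$ with $|k|$ large: the contribution $|k|A_l\cdot\bigl||k_+,\xi-kt|^{1/2}-|l_+,\eta-lt|^{1/2}\bigr|\lan k,\xi\ran^s$ from your Lipschitz piece, divided by $A_kA_lA_{k-l}$, is $\sim |k|/|k_+,\xi-kt|^{1/2}\sim|k|^{1/2}$, while the four weights are $O(1)$ and the allowed $\lan t\ran$ loss does not help for $|k|\gg\lan t\ran^2$. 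The fix is to use the sharp form $|\sqrt{a}-\sqrt{b}|=|a-b|/(\sqrt{a}+\sqrt{b})$, so that the denominator provides the missing $|k_+,\xi-kt|^{1/2}$ (or $|l_+,\eta-lt|^{1/2}$). The paper avoids the issue altogether by telescoping $A_k^2$ rather than $A_k$: since $A_k^2$ contains $|k_+,\xi-kt|$ linearly, no square-root difference ever appears, and the factor $|k|$ is absorbed directly by $|k_+,\xi-kt|^{1/2}|l_+,\eta-lt|^{1/2}$.

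\textbf{For \eqref{Ak2}.} Your reduction ``$A_k^2\cdot|k||k-l|/|k,\xi-kt|^2\leq|k-l|A_k\lan k,\xi\ran^s$'' discards the factor $|k_+,\xi-kt|^{1/2}$ (via $\lan k,\xi\ran^s\lesssim A_k$), and the resulting ``pure Sobolev inequality'' $\lan l,\eta\ran\lan k-l,\xi-\eta\ran\lan k,\xi\ran^s\lesssim A_lA_{k-l}\sum\lan\cdot\ran^{1/2-s}$ is \emph{false}: for $l=0$, $\eta=0$, $\xi=kt$ one gets $\text{LHS}/\text{RHS}\sim|k|^{1/2}\lan t\ran$. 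The paper instead keeps $\f{|k-l|}{|k_+,\xi-kt|}$ intact and splits it as $\f{|k-l|^{1/2}}{|k_+,\xi-kt|^{1/2}}\cdot|k-l|^{1/2}$; the first factor combines with $\lan k,\xi\ran^{-1/2}$ (via $|k_+,\xi-kt|\geq\lan t\ran^{-1}\lan k,\xi\ran$) and the second with $\lan l,\eta-lt\ran^{1/2}$, which is exactly the balance your chain of inequalities destroys.

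In short: the architecture is right, but both proofs need to retain the $|k_+,\xi-kt|^{1/2}$ factors rather than absorbing them prematurely.
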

\begin{proof}
\if0We define the sets
\begin{equation}\label{nar18.1}
\begin{split}
R_0:=\Big\{&((k,\xi),(\ell,\eta))\in (\Z\times \R)^2:\\
&\min(\langle k,\xi\rangle,\,\langle\ell,\eta\rangle,\,\langle k-\ell,\xi-\eta\rangle)\geq \frac{\langle k,\xi\rangle+\langle\ell,\eta\rangle+\langle k-\ell,\xi-\eta\rangle}{20}\Big\},\\
\end{split}
\end{equation}
\begin{equation}\label{nar18.2}
R_1:=\Big\{((k,\xi),(\ell,\eta))\in (\Z\times \R)^2:\,\langle k-\ell,\xi-\eta\rangle\leq \frac{\langle k,\xi\rangle+\langle\ell,\eta\rangle+\langle k-\ell,\xi-\eta\rangle}{10}\Big\},
\end{equation}
\begin{equation}\label{nar18.3}
R_2:=\Big\{((k,\xi),(\ell,\eta))\in (\Z\times \R)^2:\,\langle\ell,\eta\rangle\leq \frac{\langle k,\xi\rangle+\langle\ell,\eta\rangle+\langle k-\ell,\xi-\eta\rangle}{10}\Big\},
\end{equation}
\begin{equation}\label{nar18.4}
R_3:=\Big\{((k,\xi),(\ell,\eta))\in (\Z\times \R)^2:\,\langle k,\xi\rangle\leq \frac{\langle k,\xi\rangle+\langle\ell,\eta\rangle+\langle k-\ell,\xi-\eta\rangle}{10}\Big\}.
\end{equation}\fi

We first proof \eqref{Ak1}. Recall that $A_k(t, \xi)\sim\sim\lan k, \xi-kt\ran^\frac{1}{2} \lan k, \xi\ran^s$.

\textbf{Case 1:}  
$\langle k-l,\xi-\eta\rangle\geq (\langle k,\xi\rangle+\langle l,\eta\rangle)/4$.
In  this case, 
 \begin{align*}
&\f{\lan l, \eta-lt\ran A_k(t, \xi)}{A_l(t, \eta)A_{k-l}(t, \xi-\eta)}\\
\leq&C \f{\lan k, \xi-kt\ran^\frac{1}{2} }{\lan l, \eta-lt\ran^\frac{1}{2}\lan k-l, (\xi-\eta)-(k-l)t\ran^\frac{1}{2} } \f{\lan l, \eta-lt\ran\lan k, \xi\ran^s}{\lan l, \eta\ran^s\lan k-l, \xi-\eta\ran^s}\\
\leq&C \f{\lan l, \eta-lt\ran^\frac{1}{2}+\lan k-l, (\xi-\eta)-(k-l)t\ran^\frac{1}{2} }{\lan l, \eta-lt\ran^\frac{1}{2}\lan k-l, (\xi-\eta)-(k-l)t\ran^\frac{1}{2} } \f{\lan l, \eta-lt\ran}{\lan l, \eta\ran^s}\\
=&\f{C\lan l, \eta-lt\ran}{\lan k-l, (\xi-\eta)-(k-l)t\ran^\frac{1}{2} \lan l, \eta\ran^{s}}+\f{C\lan l, \eta-lt\ran^\frac{1}{2}}{\lan l, \eta\ran^{s}}
\\ \leq&\f{C\lan t\ran\lan l, \eta\ran}{\lan k-l, (\xi-\eta)-(k-l)t\ran^\frac{1}{2} \lan l, \eta\ran^{s}}+\f{C\lan t\ran\lan l, \eta\ran^\frac{1}{2}}{\lan l, \eta\ran^{s}}\\
=&\f{C\lan t\ran}{\lan k-l, (\xi-\eta)-(k-l)t\ran^\frac{1}{2} \lan l, \eta\ran^{s-1}}+\f{C\lan t\ran}{\lan l, \eta\ran^{s-\frac{1}{2}}}\\
\leq& C\lan t\ran(\lan l, \eta\ran^{\frac{1}{2}-s}+\lan k-l, (\xi-\eta)-(k-l)t\ran^{\frac{1}{2}-s}),
\end{align*}
and similarly 
\begin{align*}
&\f{\lan k, \xi-kt\ran A_l(t, \eta)}{A_k(t, \xi)A_{k-l}(t, \xi-\eta)}
\leq C\lan t\ran(\lan k, \xi\ran^{\frac{1}{2}-s}+\lan k-l, (\xi-\eta)-(k-l)t\ran^{\frac{1}{2}-s}),
\end{align*}
we  can get  
\begin{align*}
&|l A_k^2(t, \xi)-kA_l^2(t, \eta)|+|(\eta-lt)A_k^2(t, \xi)-(\xi-kt)A_l^2(t, \eta)|\\
\leq& 2|\lan l, \eta-lt\ran A_k^2(t, \xi)|+2|\lan k, \xi-kt\ran A_l^2(t, \xi)|\\
=& \left|\f{2\lan l, \eta-lt\ran A_k(t, \xi)}{A_l(t, \eta)A_{k-l}(t, \xi-\eta)}+\f{2\lan k, \xi-kt\ran A_l(t, \eta)}{A_k(t, \xi)A_{k-l}(t, \xi-\eta)}\right|A_k(t, \xi) A_l(t, \eta)A_{k-l}(t, \xi-\eta) \\
\leq& C\lan t\ran A_k(t, \xi) A_l(t, \eta)A_{k-l}(t, \xi-\eta) (\lan k, \xi\ran^{\frac{1}{2}-s}+\lan l, \eta\ran^{\frac{1}{2}-s}+\lan k-l, (\xi-\eta)-(k-l)t\ran^{\frac{1}{2}-s})
\end{align*}
without using symmetry.
\if0\begin{align*}
l_+\f{A_k(t, \xi)}{A_l(t, \eta)}&=l_+\f{|k_+, \xi-kt|^\frac{1}{2} }{|l_+, \eta-lt|^\frac{1}{2} } \f{\lan k, \xi\ran^s}{\lan l, \eta\ran^s} e^{M_0(t, k, \xi)-M_0(t, l, \eta)}\\
&\leq \lan t\ran A_{k-l}(t, \xi-\eta) (\lan k, \xi\ran^{\frac{1}{2}-s}+\lan l, \eta\ran^{\frac{1}{2}-s}+\lan k-l, \xi-\eta\ran^{\frac{1}{2}-s}+\lan k-l, (\xi-\eta)-(k-l)t\ran^{\frac{1}{2}-s}).
\end{align*}
where we use
\begin{align*}
l_+\f{|k_+, \xi-kt|^\frac{1}{2} }{|l_+, \eta-lt|^\frac{1}{2} }\leq|l_+, \eta-lt|^\frac{1}{2}|k_+, \xi-kt|^\frac{1}{2}\lesssim|l_+, \eta-lt|^\frac{1}{2}|(k-l)_+, (\xi-\eta)-(k-l)t|^\f12,
\end{align*}
and
\begin{align*}
\lan k, \xi\ran^{s}\lesssim \lan k-l, \xi-\eta\ran^{s},
\end{align*}
and 
\begin{align*}
e^{|M_0(t, k, \xi)-M_0(t, l, \eta)|}\leq e^{ \frac{\lan t\ran |k-l, \xi-\eta|}{\lan k, \xi-kt\ran+\lan l, \eta-lt\ran}}\leq e^{\f{(k-l)t^2}{(k-l)^2+((\xi-\eta)-(k-l)\tau)^2} }= e^{|M_0(t, k-l, \xi-\eta)|}.
\end{align*}
\textcolor{red}{
\begin{align*}
|M_0(t, k, \xi)-M_0(t, l, \eta)|
&\leq \lan t\ran  |\frac{k^2}{k^2+(\xi-k t)^2 } - \frac{l^2}{l^2+(\eta-l t)^2 } |\\
&\leq \lan t\ran  |\frac{k}{\sqrt{k^2+(\xi-k t)^2} } - \frac{l}{\sqrt{l^2+(\eta-l t)^2} } |,
\end{align*}
where we use
$$
f(x) := \frac{x^2}{x^2+a^2}, \ g(x) := \frac{x}{\sqrt{x^2+a^2}},\  f'(x) = \frac{2ax^2}{(x^2+a^2)^2}\leq g'(x) = \frac{a^2}{(x^2+a^2)^{\frac{3}{2}}},\  f(0)=g(0), \ f(x) \leq g(x),
$$
then
\begin{align*}
\left|\frac{k^2}{k^2+(\xi-k t)^2} - \frac{l^2}{l^2+(\eta-l t)^2}\right| &= |f(k) - f(l)| \leq |g(k) - g(l)| = \left|\frac{k}{\sqrt{k^2+(\xi-k t)^2}} - \frac{l}{\sqrt{l^2+(\eta-l t)^2}}\right|\\
&\leq \left|\frac{1}{\sqrt{k^2+(\xi-k t)^2}} - \frac{1}{\sqrt{l^2+(\eta-l t)^2}}\right||k-l|,
\end{align*}
and
\beno
|\f{1}{a}-\f{1}{b}|\leq \f{1}{a+b}, \ a, b>0.
\eeno}\fi

\textbf{Case 2:}  
$\langle k-l,\xi-\eta\rangle\leq (\langle k,\xi\rangle+\langle l,\eta\rangle)/4$.
(In this case $\langle k,\xi\rangle\sim\langle l,\eta\rangle $. By symmetry,  we can assume that 
$\lan k, \xi-kt\ran\leq\lan l, \eta-lt\ran$). 


Recall that $A_k(t, \xi)=|k_+, \xi-kt|^\frac{1}{2} \lan k, \xi\ran^s e^{M_0(t, k, \xi)}$, $-\pi C_{\g}\leq M_0(t, k, \xi)\leq0$. We divide
\begin{align}\label{T}
|l A_k^2(t, \xi)-kA_l^2(t, \eta)|
&=\Big| l|k_+, \xi-kt| \lan k, \xi\ran^{2s} e^{2M_0(t, k, \xi)}-k |l_+, \eta-lt| \lan l, \eta\ran^{2s} e^{2M_0(t, l, \eta)}\Big|\non\\
&\leq \cT_1+\cT_2+\cT_3,
\end{align}
where 
\begin{align*}
&\cT_1=|k_+, \xi-kt|\lan k, \xi\ran^{2s}\Big| l e^{2 M_0 (t, k, \xi)}-k e ^{2 M_0 (t, l, \eta)}\Big|,\\
&\cT_2=\Big||k_+, \xi-kt| -|l_+, \eta-lt|\Big| \lan k, \xi\ran^{2s} |k|  e ^{2 M_0 (t, l, \eta)},\\
&\cT_3=|k|e ^{2 M_0 (t, l, \eta)}|l_+, \eta-lt| \Big|\lan l, \eta\ran^{2s}- \lan k, \xi\ran^{2s}\Big|.
\end{align*}
Then we have
\begin{align*}
\cT_1&=|k_+, \xi-kt|\lan k, \xi\ran^{2s} \Big| l e^{2 M_0 (t, k, \xi)}-k e^{2 M_0 (t, l, \eta)}\Big|\\
&\leq |k_+, \xi-kt|\lan k, \xi\ran^{2s}(|k-l|e^{2 M_0 (t, l, \eta)}+|l||e^{2 M_0 (t, k, \xi)}-e^{2 M_0 (t, l, \eta)}|)\\
&\leq C\lan k, \xi-kt\ran\lan k, \xi\ran^{2s}(|k-l|+|l||M_0 (t, k, \xi)-M_0 (t, l, \eta)|),\end{align*}
and
\begin{align*}
\cT_2=\Big||k_+, \xi-kt| -|l_+, \eta-lt|\Big| \lan k, \xi\ran^{2s} |k|  e^{2 M_0 (t, l, \eta)}
\leq C\lan k-l, (\xi-\eta)-(k-l)t\ran \lan k, \xi\ran^{2s} |k|,
\end{align*}and\begin{align*}
 \cT_3&=|k|e ^{2 M_0 (t, l, \eta)}|l_+, \eta-lt| \Big|\lan l, \eta\ran^{2s}- \lan k, \xi\ran^{2s}\Big|\\
&\leq C |k|\lan l, \eta-lt\ran\Big|\lan l, \eta\ran^{2s}- \lan k, \xi\ran^{2s}\Big|\\
&\leq C|k|\lan l, \eta-lt\ran\lan k-l, \xi-\eta\ran\lan k, \xi\ran^{2s-1}.
\end{align*}
Similarly\begin{align}\label{T'}
&|(\eta-lt)A_k^2(t, \xi)-(\xi-kt)A_l^2(t, \eta)|\non\\
=&\Big|(\eta-lt)|k_+, \xi-kt| \lan k, \xi\ran^{2s} e^{2M_0(t, k, \xi)}-(\xi-kt) |l_+, \eta-lt| \lan l, \eta\ran^{2s} e^{2M_0(t, l, \eta)}\Big|\non\\
\leq& \cT_1'+\cT_2'+\cT_3',
\end{align}
where 
\begin{align*}
&\cT_1'=|k_+, \xi-kt|\lan k, \xi\ran^{2s}\Big| (\eta-lt) e^{2 M_0 (t, k, \xi)}-(\xi-kt) e ^{2 M_0 (t, l, \eta)}\Big|,\\
&\cT_2'=\Big||k_+, \xi-kt| -|l_+, \eta-lt|\Big| \lan k, \xi\ran^{2s} |\xi-kt|  e ^{2 M_0 (t, l, \eta)},\\
&\cT_3'=|\xi-kt|e ^{2 M_0 (t, l, \eta)}|l_+, \eta-lt| \Big|\lan l, \eta\ran^{2s}- \lan k, \xi\ran^{2s}\Big|,
\end{align*}
and
\begin{align*}
\cT_1'
&\leq C\lan k, \xi-kt\ran\lan k, \xi\ran^{2s}(|(\xi-\eta)-(k-l)t|+|\eta-lt||M_0 (t, k, \xi)-M_0 (t, l, \eta)|),
\end{align*}
\begin{align*}
\cT_2'
&\lesssim\lan k-l, (\xi-\eta)-(k-l)t\ran \lan k, \xi\ran^{2s} |\xi-kt|,
\end{align*}\begin{align*}
 \cT_3'
&\lesssim|\xi-kt|\lan l, \eta-lt\ran\lan k-l, \xi-\eta\ran\lan k, \xi\ran^{2s-1} ,
\end{align*}
Summing up  \eqref{T} and \eqref{T'}, we have
 \begin{align*}
&|l A_k^2(t, \xi)-kA_l^2(t, \eta)|+|(\eta-lt) A_k^2(t, \xi)-(\xi-kt) A_l^2(t, \eta)|\\
\leq& C\lan k, \xi-kt\ran\lan k, \xi\ran^{2s}(\lan k-l, (\xi-\eta)-(k-l)t\ran+\lan l, \eta-lt\ran|M_0 (t, k, \xi)-M_0 (t, l, \eta)|)\\
& 
+C\lan k, \xi-kt\ran\lan l, \eta-lt\ran\lan k-l, \xi-\eta\ran\lan k, \xi\ran^{2s-1}.
\end{align*}
By Lemma \ref{lem3.1}, we have
\begin{align*}
&\f{|l A_k^2(t, \xi)-kA_l^2(t, \eta)|+|(\eta-lt) A_k^2(t, \xi)-(\xi-kt) A_l^2(t, \eta)|}{\lan k, \xi-kt\ran^{\f12}\lan k, \xi\ran^{s}\lan l, \eta-lt\ran^{\f12}\lan l, \eta\ran^{s}\lan k-l, (\xi-\eta)-(k-l)t\ran^{\f12}\lan k-l, \xi-\eta\ran^{s}}\\
\leq& \f{C\lan k, \xi-kt\ran^{\f12}\lan k-l, (\xi-\eta)-(k-l)t\ran^{\f12}}{\lan l, \eta-lt\ran^{\f12}\lan k-l, \xi-\eta\ran^{s}}\\
&+\f{C\lan k, \xi-kt\ran^{\f12}\lan l, \eta-lt\ran^{\f12}}{\lan k-l, (\xi-\eta)-(k-l)t\ran^{\f12}\lan k-l, \xi-\eta\ran^{s}}
|M_0 (t, k, \xi)-M_0 (t, l, \eta)|\\
& +\f{C\lan k, \xi-kt\ran^{\f12}\lan l, \eta-lt\ran^{\f12}}{\lan k-l, (\xi-\eta)-(k-l)t\ran^{\f12}\lan k-l, \xi-\eta\ran^{s-1}\lan k, \xi\ran^{\f12}\lan l, \eta\ran^{\f12}} \\
\leq& \f{C\lan k-l, (\xi-\eta)-(k-l)t\ran^{\f12}}{\lan k-l, \xi-\eta\ran^{s}}\\
&+\f{C\lan k, \xi-kt\ran^{\f12}\lan l, \eta-lt\ran^{\f12}}{\lan k-l, (\xi-\eta)-(k-l)t\ran^{\f12}\lan k-l, \xi-\eta\ran^{s}}
\frac{\lan t\ran |k-l, \xi-\eta|}{\lan k, \xi-kt\ran+\lan l, \eta-lt\ran}\\
& +\f{C\lan t\ran^{\f12}\lan k, \xi\ran^{\f12}\lan t\ran^{\f12}\lan l, \eta\ran^{\f12}}{\lan k-l, (\xi-\eta)-(k-l)t\ran^{\f12}\lan k-l, \xi-\eta\ran^{s-1}\lan k, \xi\ran^{\f12}\lan l, \eta\ran^{\f12}}\\
\leq& \f{C\lan t\ran\lan k-l, \xi-\eta\ran^{\f12}}{\lan k-l, \xi-\eta\ran^{s}}+\f{C\lan t\ran |k-l, \xi-\eta|}{\lan k-l, (\xi-\eta)-(k-l)t\ran^{\f12}\lan k-l, \xi-\eta\ran^{s}}\\
& +\f{C\lan t\ran}{\lan k-l, (\xi-\eta)-(k-l)t\ran^{\f12}\lan k-l, \xi-\eta\ran^{s-1}}\\
\leq& \f{C\lan t\ran}{\lan k-l, \xi-\eta\ran^{s-\f12}}+\f{C\lan t\ran}{\lan k-l, (\xi-\eta)-(k-l)t\ran^{\f12}\lan k-l, \xi-\eta\ran^{s-1}}\\
\leq& C\lan t\ran(\lan k-l, \xi-\eta\ran^{\frac{1}{2}-s}+\lan k-l, (\xi-\eta)-(k-l)t\ran^{\frac{1}{2}-s}).
\end{align*}
Then  \eqref{Ak1} follows from 
$A_k(t, \xi)\sim\lan k, \xi-kt\ran^\frac{1}{2} \lan k, \xi\ran^s$, $A_l(t, \eta)\sim\lan l, \eta-lt\ran^{\f12}\lan l, \eta\ran^{s}$
 and\\ $ A_{k-l}(t, \xi-\eta)\sim\lan k-l, (\xi-\eta)-(k-l)t\ran^{\f12}\lan k-l, \xi-\eta\ran^{s}$.

 Now we consider  \eqref{Ak2}.

 \textbf{Case 1:}  
$\langle k-l,\xi-\eta\rangle\geq (\langle k,\xi\rangle+\langle l,\eta\rangle)/4$.
Since $A_k(t, \xi)\sim|k_+, \xi-kt|^\frac{1}{2} \lan k, \xi\ran^s$, we have
 \begin{align*}
&
\f{\lan l, \eta-lt\ran A_k(t, \xi)}{A_l(t, \eta)A_{k-l}(t, \xi-\eta)}\f{|k-l|}{|k_+, \xi-kt|}\\
\leq &\f{C\lan l, \eta-lt\ran|k_+, \xi-kt|^\frac{1}{2}|k-l|\lan k, \xi\ran^s}{|k_+, \xi-kt||l_+, \eta-lt|^\frac{1}{2}|(k-l)_+, (\xi-\eta)-(k-l)t|^\frac{1}{2}\lan l, \eta\ran^s\lan k-l, \xi-\eta\ran^s} \\
\leq&C \f{|k-l|^\frac{1}{2} }{|k_+, \xi-kt|^\frac{1}{2} } \f{\lan l, \eta-lt\ran^\f12}{\lan l, \eta\ran^s}\leq C \f{|k|^{\frac{1}{2}}+|l|^{\frac{1}{2}} }{\lan t\ran^{-\f12}\lan k, \xi\ran^{\f12} } \f{\lan t\ran^\f12 \lan l, \eta\ran^\f12}{\lan l, \eta\ran^s}= C\lan t\ran \f{|k|^{\frac{1}{2}}+|l|^{\frac{1}{2}}}{\lan k, \xi\ran^{\f12}\lan l, \eta\ran^{s-\f12}}\\
\leq&\f{C\lan t\ran}{\lan l, \eta\ran^{s-\f12}}+\f{C\lan t\ran}{\lan k, \xi\ran^{\f12}\lan l, \eta\ran^{s-1}}
\leq C\lan t\ran(\lan l, \eta\ran^{\f12-s}+\lan k, \xi\ran^{\f12-s}),
\end{align*}
where we use $\lan l, \eta-lt\ran^\f12\leq 2\lan t\ran^\f12 \lan l, \eta\ran^\f12$, $2|k_+, \xi-kt|^\frac{1}{2} \geq \lan t\ran^{-\f12}\lan k, \xi\ran^{\f12}$  and similarly 
\begin{align*}
&\f{\lan k, \xi-kt\ran A_l(t, \eta)}{A_k(t, \xi)A_{k-l}(t, \xi-\eta)}\f{|k-l|}{|l_+, \eta-lt|}
\leq C\lan t\ran(\lan l, \eta\ran^{\f12-s}+\lan k, \xi\ran^{\f12-s}),
\end{align*}
we  can get  
\begin{align*}
& |\lan l, \eta-lt\ran A_k^2(t, \xi)|\f{|k||k-l|}{k^2+(\xi-kt)^2}+|\lan k, \xi-kt\ran A_l^2(t, \xi)|\f{|l||k-l|}{l^2+(\eta-lt)^2}\\
\leq& \left|\f{\lan l, \eta-lt\ran A_k(t, \xi)}{A_l(t, \eta)A_{k-l}(t, \xi-\eta)}\f{|k-l|}{|k_+, \xi-kt|}+\f{\lan k, \xi-kt\ran A_l(t, \eta)}{A_k(t, \xi)A_{k-l}(t, \xi-\eta)}\f{|k-l|}{|l_+, \eta-lt|}
\right|\times\\&A_k(t, \xi) A_l(t, \eta)A_{k-l}(t, \xi-\eta) \\
\leq& C\lan t\ran A_k(t, \xi) A_l(t, \eta)A_{k-l}(t, \xi-\eta) (\lan l, \eta\ran^{\f12-s}+\lan k, \xi\ran^{\f12-s}).
\end{align*}

 \textbf{Case 2:}  
$\langle k-l,\xi-\eta\rangle\leq (\langle k,\xi\rangle+\langle l,\eta\rangle)/4$.
In this case $\langle k,\xi\rangle\sim\langle l,\eta\rangle $, and
 \begin{align*}
&\f{|k||k-l|}{k^2+(\xi-kt)^2}\f{ |\lan l, \eta-lt\ran A_k^2(t, \xi)|}{A_k(t, \xi) A_l(t, \eta)A_{k-l}(t, \xi-\eta)}\non\\
 \leq& \f{|k-l|}{ |k_+, \xi-kt|}\f{C\lan l, \eta-lt\ran  |k_+, \xi-kt| \lan k, \xi\ran^{2s}   }{\lan k, \xi-kt\ran^{\f12}\lan k, \xi\ran^{s}\lan l, \eta-lt\ran^{\f12}\lan l, \eta\ran^{s}\lan k-l, (\xi-\eta)-(k-l)t\ran^{\f12}\lan k-l, \xi-\eta\ran^{s}}\\
  \leq& \f{C|k-l|\lan l, \eta-lt\ran^\f12}{\lan k, \xi-kt\ran^{\f12}\lan k-l, (\xi-\eta)-(k-l)t\ran^{\f12}\lan k-l, \xi-\eta\ran^{s}}\\
    \leq& \f{C}{\lan k-l, \xi-\eta\ran^{s-1}}\Big(\f{1}{\lan k-l, (\xi-\eta)-(k-l)t\ran^{1/2}}+\f{1}{\lan k, \xi-kt\ran^{1/2}}\Big)\\
    \leq& \f{C\lan t\ran}{\lan k-l, \xi-\eta\ran^{s-1}}\Big(\f{1}{\lan k-l, \xi-\eta\ran^{1/2}}+\f{1}{\lan k, \xi\ran^{1/2}}\Big)\\
  \leq& C\lan t\ran (\lan k, \xi\ran^{\f12-s}+\lan k-l, \xi-\eta\ran^{\f12-s}).
\end{align*}
Here we used
 $A_k(t, \xi)\sim|k_+, \xi-kt|^\frac{1}{2} \lan k, \xi\ran^s\sim\lan k, \xi-kt\ran^\frac{1}{2} \lan k, \xi\ran^s,$ 
 (and similar bounds for $A_l(t, \eta)$, $A_{k-l}(t, \xi-\eta)$),
 and
 \begin{align*}
\lan l, \eta-lt\ran
&\leq\lan k, \xi-kt\ran+\lan k-l, (\xi-\eta)-(k-l)t\ran\\
&=  \lan k, \xi-kt\ran \lan k-l, (\xi-\eta)-(k-l)t\ran\Big(\f{1}{\lan k-l, (\xi-\eta)-(k-l)t\ran}+\f{1}{\lan k, \xi-kt\ran}\Big).
\end{align*}
 Then we obtain \eqref{Ak2}.
  \end{proof}
  By Young's convolution inequality, H\"older's inequality (and $\lan k, \xi\ran^{\frac{1}{2}-s}\in L_{k,\xi}^2$) we have 
\begin{lemma}\label{lemplus}
For $s>3/2$, it holds that
\begin{align*}
&\sum_{k, l\in \mathbb{Z}}\int_{\mathbb{R}^2}
 |\hat{f}(k-l, \xi-\eta )||\hat{g}(l, \eta)||{\hat{h}}(k, \xi)|(\lan k, \xi\ran^{\frac{1}{2}-s}+\lan l, \eta\ran^{\frac{1}{2}-s}+\lan k-l, \xi-\eta\ran^{\frac{1}{2}-s}\\&+\lan k-l, (\xi-\eta)-(k-l)t\ran^{\frac{1}{2}-s})d\xi d\eta
 \leq C\|\hat{f}\|_{L_{k,\xi}^2}\|\hat{g}\|_{L_{k,\xi}^2}\|\hat{h}\|_{L_{k,\xi}^2}.
\end{align*}\end{lemma}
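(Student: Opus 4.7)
The plan is to estimate each of the four summands in the weight separately via a tri-linear H\"older--Young argument. The key preliminary observation is that the hypothesis $s>3/2$ is precisely what guarantees the decay weight $\lan k,\xi\ran^{\frac{1}{2}-s}$ lies in $L_{k,\xi}^2$: indeed
\[
\|\lan k,\xi\ran^{\frac{1}{2}-s}\|_{L_{k,\xi}^2}^2 = \sum_{k\in\Z}\int_{\R}\lan k,\xi\ran^{1-2s}\,d\xi \lesssim \sum_{k\in\Z}\lan k\ran^{2-2s} < \infty.
\]

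For the piece with weight $\lan k,\xi\ran^{\frac{1}{2}-s}$, I would apply Cauchy--Schwarz in the $(k,\xi)$ variables against $|\hat h(k,\xi)|$, peel off the weight in $L^2$, and bound the residual factor $(|\hat f|\ast|\hat g|)(k,\xi)$ in $L^\infty$ by Young's convolution inequality $\|F\ast G\|_{L^\infty}\le\|F\|_{L^2}\|G\|_{L^2}$. This yields the desired bound $C\|\hat f\|_{L_{k,\xi}^2}\|\hat g\|_{L_{k,\xi}^2}\|\hat h\|_{L_{k,\xi}^2}$. The two pieces whose weights are $\lan l,\eta\ran^{\frac{1}{2}-s}$ and $\lan k-l,\xi-\eta\ran^{\frac{1}{2}-s}$ are handled identically after relabelling: one attaches the weight to $\hat g$ or $\hat f$ respectively, and the other two factors combine into the convolution estimated in $L^\infty$. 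No cancellation or frequency splitting is required at any step.

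The only point requiring real attention is the fourth weight $\lan k-l,(\xi-\eta)-(k-l)t\ran^{\frac{1}{2}-s}$, which depends on $t$ and could a priori blow up. I would attach this weight to $\hat f$ after the change of variable $m=k-l$, $\zeta=\xi-\eta$, and then observe that the further substitution $u=\zeta-mt$ at fixed $m\in\Z$ has unit Jacobian, so that
\[
\|\lan m,\zeta-mt\ran^{\frac{1}{2}-s}\|_{L_{m,\zeta}^2} = \|\lan m,u\ran^{\frac{1}{2}-s}\|_{L_{m,u}^2} \leq C,
\]
with the constant \emph{independent} of $t$. The same H\"older--Young scheme then closes the estimate. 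In short, the only potential obstacle -- uniformity in $t$ of the time-dependent weight -- dissolves at once upon this translation, and the entire lemma reduces to the $L^2$-integrability of $\lan\cdot,\cdot\ran^{\frac{1}{2}-s}$, which is exactly the significance of the threshold $s>3/2$.
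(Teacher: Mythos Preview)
Your proposal is correct and follows exactly the approach the paper indicates: the paper's proof is the single sentence ``By Young's convolution inequality, H\"older's inequality (and $\lan k,\xi\ran^{\frac{1}{2}-s}\in L^2_{k,\xi}$) we have,'' and your three-fold H\"older ($L^2\times L^2\times L^\infty$) together with $\|F\ast G\|_{L^\infty}\le\|F\|_{L^2}\|G\|_{L^2}$ is precisely what this unpacks to. Your explicit remark that the $t$-dependent weight has the same $L^2_{m,\zeta}$ norm as the static one (via the unit-Jacobian shift $\zeta\mapsto\zeta-mt$) is a detail the paper leaves implicit but is indeed the reason the constant is uniform in $t$.
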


\smallskip

\section{Energy estimate in small time scale $t<\kappa^{-\frac{1}{6}}$}

\setcounter{equation}{0}

As the domain is $\mathbb{T}\times\mathbb{R}$, the space-time estimates can be established through the use of  the Fourier
transform in $(X,Y)$. Taking  Fourier transform  to \eqref{eq: u new}, we have
\begin{equation}\label{eq: ub1}
 \left\{\begin{array}{l}
\hat{u}_t+\nu(k^2+(\xi-kt)^2) \hat{u}-\hat{u}_2\big(\f{k^2-(\xi-kt)^2}{k^2+(\xi-kt)^2},\f{2k(\xi-kt)}{k^2+(\xi-kt)^2}\big)\\
+\g^2\hat{\vth}\big(-\f{k(\xi-kt)}{k^2+(\xi-kt)^2},\f{k^2}{k^2+(\xi-kt)^2}\big)=\hat{F},\\
\hat{\vth}_t+\mu(k^2+(\xi-kt)^2) \hat{\vth}-\hat{u}_2=\hat{G},\\
k \hat{u}_1+(\xi-kt) \hat{u}_2=0.
\end{array}\right.
\end{equation}
Here we used  $\hat{p}_L=\f{2ik\hat{u}_2+i\g^2 (\xi-kt) \hat{\vth}}{k^2+(\xi-kt)^2}$ and
\begin{align*}
&(\hat{u}_2,0)+i(k,\xi-kt)\hat{p}_L +(0, \g^2\hat{ \vth})\\
=&-\hat{u}_2\bigg(\f{k^2-(\xi-kt)^2}{k^2+(\xi-kt)^2},\f{2k(\xi-kt)}{k^2+(\xi-kt)^2}\bigg)+
\g^2\hat{\vth}\bigg(-\f{k(\xi-kt)}{k^2+(\xi-kt)^2},\f{k^2}{k^2+(\xi-kt)^2}\bigg).
\end{align*}

\begin{proposition}\label{pr1}
Let $(u,\vth)$ be a solution of \eqref{eq: ub1}. If $\kappa\in(0, 1)$, $\g>\f12$, $s>\f32$ and $\f{\nu+\mu}{2\g\sqrt{\nu \mu} }< 2-\e$, $0<\e<1/\g$, then
\begin{align}\label{t small}
&\f{d}{dt}\sum_{k\in \mathbb{Z}}\int_{\mathbb{R}} \Big(\big|A_k\hat{u}\big|^2+\g^2\big|A_k\hat{\vth}\big|^2+\Re (A_k\hat{\vth}A_k \overline{\hat{u}}_1)\Big) d\xi +\sum_{k\in \mathbb{Z}}\int_{\mathbb{R}} \f{\g  k^2}{k^2+(\xi-kt)^2}\big|A_k\hat{\vth}\big|^2 d\xi\nonumber\\
&+\f{2}{\g} \sum_{k\in \mathbb{Z}}\int_{\mathbb{R}}  |A_k \hat{u}_2|^2 d\xi+\e \sum_{k\in \mathbb{Z}}\int_{\mathbb{R}} (k^2+(\xi-kt)^2)(\nu\big|A_k\hat{u}\big|^2+\mu\gamma^2\big|A_k\hat{\vth}\big|^2) d\xi\\ \nonumber
 \leq&C\lan t\ran\|A_k(\hat{u},\hat{\vth})\|_{{L_{k,\xi}^2}}^3.
\end{align}
\end{proposition}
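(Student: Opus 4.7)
The plan is to differentiate $E(t)=\sum_k\int(|A_k\hat u|^2+\gamma^2|A_k\hat\vth|^2+\Re(A_k\hat\vth A_k\overline{\hat u_1}))\,d\xi$ in time, substitute the right-hand sides of \eqref{eq: ub1}, and organise the result into four blocks: (a) the multiplier-derivative contributions from $2\partial_t\ln A_k$, (b) the viscous dissipation, (c) the non-dissipative linear couplings, and (d) the nonlinear pairings with $\hat F$ and $\hat G$. The decisive arithmetic identity is $2\partial_t\ln A_k=-\frac{k(\xi-kt)}{K^2}-\frac{2C_\gamma k^2}{K^2}$ for $k\neq 0$, where $K^2:=k^2+(\xi-kt)^2$: the first summand will cancel the ``shearing'' coefficient produced by the inviscid coupling, and the second will supply the coercive lower bound used below.

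The first main step is to invoke incompressibility $\div_L u=0$, i.e.\ $\hat u_1=-\frac{\xi-kt}{k}\hat u_2$ for $k\neq 0$ (and $\hat u_2=0$ when $k=0$), to rewrite every linear contribution as a Hermitian form in the reduced vector $V=(A_k\hat u_2,A_k\hat\vth)$. Denote by $\vec a,\vec b$ the two coupling vectors appearing in \eqref{eq: ub1}. The algebraic identities $\hat u\cdot\vec a=\frac{\xi-kt}{k}\hat u_2$ and $\hat u\cdot\vec b=\hat u_2$ imply that the $\pm 2\gamma^2\Re(A_k\hat u_2 A_k\overline{\hat\vth})$ pieces generated by the buoyancy pairing cancel exactly between $\partial_t|A_k\hat u|^2$ and $\partial_t(\gamma^2|A_k\hat\vth|^2)$. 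The $\pm\frac{\xi-kt}{k}|A_k\hat u_2|^2$ terms arising from the inviscid $\hat u_2\vec a$ pairing, from the cross term via $\Re(\hat u_2\overline{\hat u_1})=-\frac{\xi-kt}{k}|\hat u_2|^2$, and from the multiplier acting on $|A_k\hat u|^2=\frac{K^2}{k^2}|A_k\hat u_2|^2$ combine to give the clean diagonal coefficient $-2C_\gamma$ for $|A_k\hat u_2|^2$; a parallel computation in the $\hat\vth$ slot yields the diagonal $-\frac{2C_\gamma\gamma^2 k^2}{K^2}$ for $|A_k\hat\vth|^2$. Since $\gamma>\frac12$ forces $C_\gamma\geq 1/\gamma$ and $C_\gamma\gamma^2\geq\gamma/2$, moving these to the left-hand side produces the two non-dissipative good terms in \eqref{t small} with strict slack.

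The residual inviscid off-diagonal piece is $\bigl(\frac{k^2}{K^2}+\frac{2C_\gamma k(\xi-kt)}{K^2}\bigr)\Re(A_k\hat u_2 A_k\overline{\hat\vth})$. I would absorb it not by separate Young estimates but by verifying the negative definiteness of the full $2\times 2$ Hermitian form on $V$: the required discriminant inequality reduces to $(k+2C_\gamma(\xi-kt))^2\leq 16C_\gamma^2\gamma^2 K^2$, and hence to $(4\gamma^2-1)(16C_\gamma^2\gamma^2-1)\geq 1$, which is satisfied strictly for every $\gamma>\frac12$ by the specific choice $C_\gamma=\max\{1,2/(2\gamma-1)\}$. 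The surviving slack furnishes the good terms $\frac{2}{\gamma}|A_k\hat u_2|^2$ and $\frac{\gamma k^2}{K^2}|A_k\hat\vth|^2$ on the LHS. Separately, the viscous off-diagonal $(\mu+\nu)\frac{(\xi-kt)K^2}{k}\Re(A_k\hat u_2 A_k\overline{\hat\vth})$ combines with $-2\nu K^2|A_k\hat u|^2-2\mu\gamma^2 K^2|A_k\hat\vth|^2$ into a quadratic form whose coercivity, using $|\frac{\xi-kt}{k}\hat u_2|\le|\hat u|$, is exactly the hypothesis $(\mu+\nu)/(2\gamma\sqrt{\mu\nu})<2-\varepsilon$ and produces the dissipative good term $\varepsilon K^2(\nu|A_k\hat u|^2+\mu\gamma^2|A_k\hat\vth|^2)$ on the LHS.

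For block (d), each pairing with $\hat F$ or $\hat G$ becomes a trilinear sum of the form $\sum_{k,l}\int A_k^2\overline{\hat u(k,\xi)}\hat u(k-l,\xi-\eta)\cdot(l,\eta-lt)\hat u(l,\eta)\,d\xi d\eta$, with parallel expressions for $\hat G$ and for the pressure, the latter handled via $\hat p_{NL}\sim K^{-2}\widehat{\div_L(u_2\nabla_L^2 u)}$. The multiplier factor $A_k^2\cdot(l,\eta-lt)$ is controlled by Lemma~\ref{Ak}, namely $|lA_k^2-kA_l^2|+|(\eta-lt)A_k^2-(\xi-kt)A_l^2|\lesssim\langle t\rangle A_kA_lA_{k-l}(\langle k,\xi\rangle^{1/2-s}+\cdots)$, and Lemma~\ref{lemplus} then closes the sum as $C\langle t\rangle\|A_k(\hat u,\hat\vth)\|_{L^2_{k,\xi}}^3$ using $s>3/2$. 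The main obstacle throughout is the detailed bookkeeping carried out in the second and third paragraphs above: one must verify precisely how the coefficients produced by the multiplier derivative, the inviscid buoyancy and shear couplings, and the cross term $\Re(\hat\vth\overline{\hat u_1})$ interlock so that the residual off-diagonal $\frac{k^2+2C_\gamma k(\xi-kt)}{K^2}$ is dominated by the diagonals $-2C_\gamma$ and $-\frac{2C_\gamma\gamma^2 k^2}{K^2}$ uniformly in $(k,\xi,t)$ for every $\gamma>\frac12$, which is the raison d'\^etre of the sharp choice $C_\gamma=\max\{1,2/(2\gamma-1)\}$.
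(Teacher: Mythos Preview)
Your overall strategy matches the paper's: differentiate $E(t)$, separate the multiplier-derivative, viscous, inviscid-coupling, and nonlinear contributions, and close the latter via Lemma~\ref{Ak} and Lemma~\ref{lemplus}. The identification of the cancellations (buoyancy cross terms, the $\frac{\xi-kt}{k}$ pieces combining into the clean diagonal $-2C_\gamma$) and the viscous Young bound are correct and coincide with the paper.

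The genuine difference is in how you treat the residual inviscid off-diagonal $\bigl(\tfrac{k^2}{K^2}+\tfrac{2C_\gamma k(\xi-kt)}{K^2}\bigr)\Re(A_k\hat u_2 A_k\overline{\hat\vth})$. The paper does \emph{not} package this as a single $2\times2$ form; instead it splits $\mathcal I_2$ into the two summands $-\tfrac{2C_\gamma k^2}{K^2}(A_k\hat\vth A_k\overline{\hat u_1})$ and $\tfrac{k^2}{K^2}(A_k\hat\vth A_k\overline{\hat u_2})$, applies Young's inequality with weight $\gamma$ to each separately, and arrives at the bound $|\mathcal I_2|\le \tfrac{C_\gamma+1}{\gamma}\tfrac{k^2}{K^2}\gamma^2|A_k\hat\vth|^2+\tfrac{C_\gamma}{\gamma}|A_k\hat u_2|^2$. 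The choice $C_\gamma=\max\{1,\tfrac{2}{2\gamma-1}\}$ is then exactly the condition $\tfrac{C_\gamma+2}{\gamma}\le 2C_\gamma$, which leaves precisely the constants $\tfrac{2}{\gamma}$ and $\gamma$ that appear in \eqref{t small}. This is more elementary than a discriminant check and pins down the constants without further work.

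Your discriminant route is a legitimate alternative, but as written there is a gap: the inequality $(4\gamma^2-1)(16C_\gamma^2\gamma^2-1)\ge 1$ only yields non-negativity of the full form with diagonals $2C_\gamma$ and $\tfrac{2C_\gamma\gamma^2 k^2}{K^2}$. Saying that the ``surviving slack'' then furnishes $\tfrac{2}{\gamma}|A_k\hat u_2|^2+\tfrac{\gamma k^2}{K^2}|A_k\hat\vth|^2$ is not justified---a large discriminant margin does not by itself give a specific lower bound of that shape. To make your approach rigorous you would need to run the discriminant with the \emph{reduced} diagonals $2C_\gamma-\tfrac{2}{\gamma}$ and $(2C_\gamma\gamma-1)\tfrac{\gamma k^2}{K^2}$; that condition is $4C_\gamma^2\le\bigl(4(2C_\gamma\gamma-2)(2C_\gamma\gamma-1)-1\bigr)\bigl(4(2C_\gamma\gamma-2)(2C_\gamma\gamma-1)-4C_\gamma^2\bigr)$, which does hold for all $\gamma>\tfrac12$ but is a separate check you have not carried out. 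Also note that for the pressure piece in $\mathcal I_3$ you will need the second estimate \eqref{Ak2} of Lemma~\ref{Ak}, not just \eqref{Ak1}; the paper uses it explicitly there.
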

 \begin{proof}
 
The system for the weighted variables  $(A_k\hat{u}, A_k\hat{\vth})$ read as
\begin{equation}\label{ub2}
 \left\{\begin{array}{l}
(A_k\hat{u})_t-q A_k\hat{u}+\nu(k^2+(\xi-kt)^2) A_k\hat{u}\\
\quad-\big(\f{k^2-(\xi-kt)^2}{k^2+(\xi-kt)^2}, \f{2k(\xi-kt)}{k^2+(\xi-kt)^2}\big)A_k\hat{u}_2+\g^2\big(-\f{k(\xi-kt)}{k^2+(\xi-kt)^2}, \f{k^2}{k^2+(\xi-kt)^2}\big) A_k\hat{\vth} =A_k \hat{F},\\
(A_k\hat{\vth})_t-qA_k\hat{\vth}+\mu(k^2+(\xi-kt)^2) A_k\hat{\vth}-A_k\hat{u}_2=A_k \hat{G},\\
k A_k \hat{u}_1+(\xi-kt)  A_k \hat{u}_2=0,
\end{array}\right.
\end{equation}
here we define $q:=\f{\p_t A_k}{A_k}$, then
\begin{align}\label{def:q}
q:=-\f{ k (\xi-kt)}{2(k^2+(\xi-kt)^2)}-\f{  C_\g k^2}{k^2+(\xi-kt)^2},\end{align}
where $ C_\g=\max\{1,\f{2}{2\g-1}\} $ is determined according to the proof. 

Multiplying   $ \eqref{ub2}_1$ with $2 A_k\overline{\hat{u}}$,  and  then integrating   with respect to $\xi$, subsequently summing over $k$, taking the real part, we get
\begin{align}
& \f{d}{dt}\sum_{k\in \mathbb{Z}}\int_{\mathbb{R}} |A_k \hat{u}|^2d\xi-2\sum_{k\in \mathbb{Z}}\int_{\mathbb{R}} q\big|A_k\hat{u}\big|^2d\xi+2\nu\sum_{k\in \mathbb{Z}}\int_{\mathbb{R}}  (k^2+(\xi-kt)^2)\big|A_k\hat{u}\big|^2 d\xi \nonumber\\
&-2\Re\sum_{k\in \mathbb{Z}}\int_{\mathbb{R}}\f{k^2-(\xi-kt)^2}{k^2+(\xi-kt)^2}A_k \hat{u}_2A_k\overline{\hat{u}}_1d\xi +2\g^2\Re\sum_{k\in \mathbb{Z}}\int_{\mathbb{R}}\f{(\xi-kt)^2}{k^2+(\xi-kt)^2}A_k\hat{\vth} A_k\overline{\hat{u}}_2d\xi\nonumber\\
&+2\Re\sum_{k\in \mathbb{Z}}\int_{\mathbb{R}}\f{2k^2}{k^2+(\xi-kt)^2}A_k \hat{u}_2A_k\overline{\hat{u}}_1d\xi +2\g^2\Re\sum_{k\in \mathbb{Z}}\int_{\mathbb{R}}\f{k^2}{k^2+(\xi-kt)^2}A_k\hat{\vth} A_k\overline{\hat{u}}_2d\xi\nonumber\\
=&2\Re\sum_{k\in \mathbb{Z}}\int_{\mathbb{R}} A_k \hat{F}  A_k\overline{\hat{u}},\label{u1}
\end{align}
here we used $kA_k \hat{u}_1+(\xi-kt) A_k\hat{u}_2=0$ (see $\eqref{ub2}_3$) and 
\begin{align*}
-\f{k(\xi-kt)}{(k^2+(\xi-kt)^2)^\f12}A_k\hat{\vth} A_k\overline{\hat{u}}_1=\f{(\xi-kt)^2}{(k^2+(\xi-kt)^2)^\f12}A_k\hat{\vth} A_k\overline{\hat{u}}_2,\\
-\f{2k(\xi-kt)}{(k^2+(\xi-kt)^2)^\f12}A_k \hat{u}_2A_k\overline{\hat{u}}_2=\f{2k^2}{(k^2+(\xi-kt)^2)^\f12}A_k \hat{u}_2A_k\overline{\hat{u}}_1.
\end{align*}

Multiplying   $ \eqref{ub2}_2$ with $2 A_k\overline{\hat{\vth}}$,  and  then integrating   with respect to $\xi$, subsequently summing over $k$, taking the real part, we get
\begin{align}
& \frac{d}{dt}\sum_{k\in \mathbb{Z}}\int_{\mathbb{R}}\big| A_k\hat{\vth}\big|^2d\xi-2\sum_{k\in \mathbb{Z}}\int_{\mathbb{R}} q\big|A_k\hat{\vth}\big|^2d\xi+2\mu \sum_{k\in \mathbb{Z}}\int_{\mathbb{R}} (k^2+(\xi-kt)^2)\big|A_k\hat{\vth}\big|^2d\xi\nonumber\\
&-2\Re\sum_{k\in \mathbb{Z}}\int_{\mathbb{R}}A_k \hat{u}_2 A_k\overline{\hat{\vth}}d\xi 
=2\Re\sum_{k\in \mathbb{Z}}\int_{\mathbb{R}}A_k \hat{G} A_k\overline{\hat{\vth}}d\xi.\label{vth}
\end{align}
Combining  \eqref{u1} and \eqref{vth} together, we have
\begin{align}\label{estimate-3}
& \frac{d}{dt}\sum_{k\in \mathbb{Z}}\int_{\mathbb{R}}\big| A_k\hat{u}\big|^2+\gamma^2\big| A_k\hat{\vth}\big|^2d\xi-2\sum_{k\in \mathbb{Z}}\int_{\mathbb{R}} q(\big|A_k\hat{u}\big|^2+\gamma^2\big|A_k\hat{\vth}\big|^2)d\xi\nonumber\\
&+2 \sum_{k\in \mathbb{Z}}\int_{\mathbb{R}} (k^2+(\xi-kt)^2)(\nu\big|A_k\hat{u}\big|^2+\mu\gamma^2\big|A_k\hat{\vth}\big|^2) d\xi+2 \Re\sum_{k\in \mathbb{Z}}\int_{\mathbb{R}}A_k\hat{u}_2A_k\overline{\hat{u}}_1 d\xi \nonumber\\
=&2\Re\sum_{k\in \mathbb{Z}}\int_{\mathbb{R}}A_k \hat{F} A_k\overline{\hat{u}}d\xi+2\g^2\Re\sum_{k\in \mathbb{Z}}\int_{\mathbb{R}}  A_k \hat{G} A_k\overline{\hat{\vth}} d\xi.
\end{align}
Additionally, we mention that {(using \eqref{def:q} and $ \eqref{ub2}_3$)}
\begin{align}\label{linear L}
-2\sum_{k\in \mathbb{Z}}\int_{\mathbb{R}} q\big|A_k\hat{u}\big|^2 d\xi
&=2\sum_{k\in \mathbb{Z}}\int_{\mathbb{R}} \Big(\f{ k (\xi-kt)}{2(k^2+(\xi-kt)^2)}+\f{  C_\g k^2}{k^2+(\xi-kt)^2}\Big)\big|A_k\hat{u}\big|^2 d\xi\nonumber\\
&=-\Re\sum_{k\in \mathbb{Z}}\int_{\mathbb{R}}A_k\hat{u}_2A_k\overline{\hat{u}}_1 d\xi+2C_\g \sum_{k\in \mathbb{Z}}\int_{\mathbb{R}}  |A_k \hat{u}_2|^2 d\xi,
\end{align}
where we use
\begin{align*}
\f{k (\xi-kt)}{k^2+(\xi-kt)^2 }| \hat{u}|^2&=\f{k (\xi-kt)}{k^2+(\xi-kt)^2 }( \hat{u}_1\overline{ \hat{u}}_1+ \hat{u}_2\overline{ \hat{u}}_2)\nonumber\\
&=-\f{(\xi-kt)^2}{k^2+(\xi-kt)^2 }\Re ( \hat{u}_2 \overline{ \hat{u}}_1)-\f{k^2}{k^2+(\xi-kt)^2 }\Re ( \hat{u}_1\overline{ \hat{u}}_2)=-\Re ( \hat{u}_2 \overline{ \hat{u}}_1),
\end{align*}
and
\begin{align}\label{u2}
\big|\hat{u}\big|^2=\big|\hat{u}_1\big|^2+\big|\hat{u}_2\big|^2=\f{|\xi-kt|^2}{k^2}\big|\hat{u}_2\big|^2+\big|\hat{u}_2\big|^2=\f{k^2+(\xi-kt)^2 } {k^2}\big|\hat{u}_2\big|^2.
\end{align}
So \eqref{estimate-3} can be rewritten as 
\begin{align}\label{estimate-4}
& \frac{d}{dt}\sum_{k\in \mathbb{Z}}\int_{\mathbb{R}}\big| A_k\hat{u}\big|^2+\gamma^2\big| A_k\hat{\vth}\big|^2d\xi+2C_\g \sum_{k\in \mathbb{Z}}\int_{\mathbb{R}}  |A_k \hat{u}_2|^2 d\xi+2C_\g\sum_{k\in \mathbb{Z}}\int_{\mathbb{R}} \f{ \g^2 k^2}{k^2+(\xi-kt)^2}\big|A_k\hat{\vth}\big|^2 d\xi\nonumber\\
&+2 \sum_{k\in \mathbb{Z}}\int_{\mathbb{R}} (k^2+(\xi-kt)^2)(\nu\big|A_k\hat{u}\big|^2+\mu\gamma^2\big|A_k\hat{\vth}\big|^2) d\xi+\Re\sum_{k\in \mathbb{Z}}\int_{\mathbb{R}}A_k\hat{u}_2A_k\overline{\hat{u}}_1 d\xi \nonumber\\
&+\Re\sum_{k\in \mathbb{Z}}\int_{\mathbb{R}} \f{ \g^2 k (\xi-kt)}{k^2+(\xi-kt)^2}\big|A_k\hat{\vth}\big|^2 d\xi
=2\Re\sum_{k\in \mathbb{Z}}\int_{\mathbb{R}}A_k \hat{F} A_k\overline{\hat{u}}d\xi+2\g^2\Re\sum_{k\in \mathbb{Z}}\int_{\mathbb{R}}  A_k \hat{G} A_k\overline{\hat{\vth}} d\xi.
\end{align}

We calculate the time derivative of  $A_k\hat{\vth}A_k \overline{\hat{u}}_1$ 
\begin{align}\label{c1}
&\f{d}{dt}\Big( A_k\hat{\vth}A_k \overline{\hat{u}}_1\Big)\nonumber\\
=& (A_k\hat{\vth})_tA_k \overline{\hat{u}}_1+A_k\hat{\vth}(A_k \overline{\hat{u}}_{1})_t\nonumber\\
=& \Big(q A_k\hat{\vth}-\mu(k^2+(\xi-kt)^2)) A_k\hat{\vth}+  A_k\hat{u}_2+A_k \hat{G}\Big) A_k \overline{\hat{u}}_1\nonumber\\
&+A_k\hat{\vth}\Big(q A_k \overline{\hat{u}}_1-\nu(k^2+(\xi-kt)^2) A_k \overline{\hat{u}}_1\nonumber\\&+\f{k^2-(\xi-kt)^2}{k^2+(\xi-kt)^2}A_k \overline{\hat{u}}_2+\g^2 \f{k (\xi-kt)}{k^2+(\xi-kt)^2} A_k \overline{\hat{\vth}}+A_k \overline{\hat{F}_1}\Big)\nonumber\\
=&2q A_k\hat{\vth}A_k \overline{\hat{u}}_1-(\nu+\mu) (k^2+(\xi-kt)^2) A_k\hat{\vth}A_k \overline{\hat{u}}_1+A_k \hat{G} A_k \overline{\hat{u}}_1\nonumber\\
&+\f{k^2-(\xi-kt)^2}{k^2+(\xi-kt)^2} A_k\hat{\vth}A_k \overline{\hat{u}}_2+\f{\g^2k(\xi-kt)}{k^2+(\xi-kt)^2} |A_k\hat{\vth}|^2
+  A_k\hat{u}_2A_k \overline{\hat{u}}_1+A_k\hat{\vth}A_k \overline{\hat{F}}_1.
\end{align}

 Integrating   with respect to $\xi$, subsequently summing over $k$, taking the real part of \eqref{c1}, and adding it to  \eqref{estimate-4}, we have
  \begin{align}\label{rs2}
& \f{d}{dt}\sum_{k\in \mathbb{Z}}\int_{\mathbb{R}} \Big(\big|A_k\hat{u}\big|^2+\g^2\big|A_k\hat{\vth}\big|^2+\Re (A_k\hat{\vth}A_k \overline{\hat{u}}_1)\Big) d\xi\nonumber\\
& +2C_\g \sum_{k\in \mathbb{Z}}\int_{\mathbb{R}}  |A_k \hat{u}_2|^2 d\xi+2C_\g\sum_{k\in \mathbb{Z}}\int_{\mathbb{R}} \f{ \g^2 k^2}{k^2+(\xi-kt)^2}\big|A_k\hat{\vth}\big|^2 d\xi\nonumber\\
&+2 \sum_{k\in \mathbb{Z}}\int_{\mathbb{R}} (k^2+(\xi-kt)^2)(\nu\big|A_k\hat{u}\big|^2+\mu\gamma^2\big|A_k\hat{\vth}\big|^2) d\xi \nonumber\\
& =\Re\sum_{k\in \mathbb{Z}}\int_{\mathbb{R}}\Big(-\underbrace{(\nu+\mu)(k^2+(\xi-kt)^2)( A_k\hat{\vth}A_k \overline{\hat{u}}_1) }_{\mathcal{I}_1}\nonumber\\&+\underbrace{2q (A_k\hat{\vth}A_k \overline{\hat{u}}_1) +\f{k^2-(\xi-k t)^2}{k^2+(\xi-kt)^2} (A_k\hat{\vth}A_k \overline{\hat{u}}_2)}_{\mathcal{I}_2}\nonumber\\
 &+ \underbrace{A_k \hat{G} A_k \overline{\hat{u}}_1+A_k\hat{\vth}A_k\overline{ \hat{F}}_1}_{\mathcal{I}_3}+\underbrace{2\g^2 A_k \hat{ G}  A_k \overline{\hat{\vth}}}_{\mathcal{I}_4}+ \underbrace{2A_k \hat{F} A_k \overline{\hat{u}}}_{\mathcal{I}_5}\Big) d\xi.
\end{align}
What's more the  linear term  on the right hand side  of  \eqref{rs2} can be estimated as
\begin{align}\label{I1}
{\left|\mathcal{I}_1\right|}\leq&(\nu+\mu)\big(k^2+(\xi-kt)^2\big)\big|A_k\hat{\vth}\big|\big|A_k\hat{u}_1\big|\nonumber\\
\leq&\f{\nu+\mu}{2\g\sqrt{\nu \mu} }(k^2+(\xi-kt)^2)(\nu\big|A_k\hat{u}\big|^2+\mu\g^2\big|A_k\hat{\vth}\big|^2),
\end{align}
and  by the divergence free condition $ \eqref{ub2}_3$,
\begin{align}\label{I2}
|\mathcal{I}_2|
=&\left|-2\Big(\f{ k (\xi-kt)}{2(k^2+(\xi-kt)^2)}+\f{  C_\g k^2}{k^2+(\xi-kt)^2}\Big)(A_k\hat{\vth}A_k \overline{\hat{u}}_1)+\f{k^2-(\xi-k t)^2}{k^2+(\xi-kt)^2} (A_k\hat{\vth}A_k \overline{\hat{u}}_2)\right|\nonumber\\
=&\left|-\f{2  C_\g k^2}{k^2+(\xi-kt)^2}(A_k\hat{\vth}A_k \overline{\hat{u}}_1)+\f{k^2}{k^2+(\xi-kt)^2} (A_k\hat{\vth}A_k \overline{\hat{u}}_2)\right|\nonumber\\
\leq& \f{C_{\g} k^2}{\g(k^2+(\xi-kt)^2)} (\g^2|A_k\hat{\vth}|^2+\f{|\xi-kt|^2}{k^2}|A_k\hat{u}_2|^2)\nonumber\\&+ \f{ k^2}{\g(k^2+(\xi-kt)^2)} (\g^2|A_k\hat{\vth}|^2+|A_k\hat{u}_2|^2)\nonumber\\
\leq &\f{C_{\g}+1}{\g}\f{ k^2}{k^2+(\xi-kt)^2} \g^2|A_k\hat{\vth}|^2+\f{C_{\g}}{\g}|A_k\hat{u}_2|^2,
\end{align}
here we used $C_\g\geq 1 $. 

Summing up \eqref{I1}, \eqref{I2} into  \eqref{rs2},  $\f{\nu+\mu}{2\g\sqrt{\nu \mu} }< 2-\e$ and $\f{C_{\g}+2}{\g}\leq 2C_\g$ (as $\f{2}{2\g-1}\leq C_\g$), we have
\begin{align}\label{rs3}
&\f{d}{dt}\sum_{k\in \mathbb{Z}}\int_{\mathbb{R}} \Big(\big|A_k\hat{u}\big|^2+\g^2\big|A_k\hat{\vth}\big|^2+\Re (A_k\hat{\vth}A_k \overline{\hat{u}}_1)\Big) d\xi +\sum_{k\in \mathbb{Z}}\int_{\mathbb{R}} \f{\g  k^2}{k^2+(\xi-kt)^2}\big|A_k\hat{\vth}\big|^2 d\xi\nonumber\\
&+\f{2}{\g} \sum_{k\in \mathbb{Z}}\int_{\mathbb{R}}  |A_k \hat{u}_2|^2 d\xi+\e \sum_{k\in \mathbb{Z}}\int_{\mathbb{R}} (k^2+(\xi-kt)^2)(\nu\big|A_k\hat{u}\big|^2+\mu\gamma^2\big|A_k\hat{\vth}\big|^2) d\xi \nonumber\\
 &\leq\Re\sum_{k\in \mathbb{Z}}\int_{\mathbb{R}}\Big( \underbrace{A_k \hat{G} A_k \overline{\hat{u}}_1+A_k\hat{\vth}A_k\overline{ \hat{F}}_1}_{\mathcal{I}_3}+\underbrace{2\g^2A_k \hat{ G}  A_k \overline{\hat{\vth}}}_{\mathcal{I}_4} + \underbrace{2A_k \hat{F} A_k \overline{\hat{u}}}_{\mathcal{I}_6}\Big) d\xi.
\end{align}
All the right side of \eqref{rs3} are nonlinear terms. For $I_4$, we use symmetrization to have
\begin{align}\label{I4}
&\Big|\Re\sum_{k\in \mathbb{Z}}\int_{\mathbb{R}}\mathcal{I}_4 d\xi\Big|=\Big|2\g^2\Re\sum_{k\in \mathbb{Z}}\int_{\mathbb{R}}A_k \hat{G} A_k \overline{\hat{\vth}} d\xi\Big|\nonumber\\
=&\Big|2\g^2\Re\sum_{k, l\in \mathbb{Z}}\int_{\mathbb{R}^2}( \hat{u}_1(k-l, \xi-\eta )il\hat{\vth}(l, \eta) +  \hat{u}_2(k-l, \xi-\eta) i (\eta-l t)\hat{\vth}(l, \eta) )A_k^2\overline{\hat{\vth}}(k, \xi) d\xi d\eta\Big| \nonumber\\
\leq &C\sum_{k, l\in \mathbb{Z}}\int_{\mathbb{R}^2}\Big|l A_k^2 (t, \xi)-kA_l^2 (t, \eta) \Big||\hat{u}_1(k-l, \xi-\eta )||\hat{\vth}(l, \eta)||\overline{\hat{\vth}}(k, \xi) |\nonumber\\
&+\Big|(\eta-lt) A_k^2 (t, \xi)-(\xi-kt) A_l^2 (t, \eta) \Big| |\hat{u}_2(k-l, \xi-\eta )||\hat{\vth}(l, \eta)||\overline{\hat{\vth}}(k, \xi)| d\xi d\eta \nonumber\\
\leq&C \sum_{k, l\in \mathbb{Z}}\int_{\mathbb{R}^2}|\hat{u}(k-l, \xi-\eta )||\hat{\vth}(l, \eta)||\overline{\hat{\vth}}(k, \xi)| \lan t\ran A_k(t, \xi) A_l(t, \eta)A_{k-l}(t, \xi-\eta)\times\nonumber\\
&\quad  (\lan k, \xi\ran^{\frac{1}{2}-s}+\lan l, \eta\ran^{\frac{1}{2}-s}+\lan k-l, \xi-\eta\ran^{\frac{1}{2}-s}+\lan k-l, \xi-\eta-(k-l)t\ran^{\frac{1}{2}-s})  d\xi d\eta \nonumber\\
\leq &C\lan t\ran\|A_k  \hat{\vth}\|_{L^2_{k, \xi}}\|A_k  \hat{\vth}\|_{L^2_{k, \xi}}\|A_k  \hat{u}\|_{L^2_{k, \xi}},
\end{align}
here we used that $\hat{u}(k-l, \xi-\eta)=\overline{\hat{u}}(l-k,\eta-\xi)$, Lemma \ref{Ak} and  Lemma \ref{lemplus}.

For $\mathcal{I}_3$, we use $\hat{u}(k-l, \xi-\eta)=\overline{\hat{u}}(l-k,\eta-\xi)$, $-\Delta_Lp_{NL}=
2((\na_L^1u_1)^2+\na_L^1u_2\na_L^2u_1)$, Lemma \ref{Ak} and  Lemma \ref{lemplus}, then
\begin{align}\label{I5}
&\Big|\Re\sum_{k\in \mathbb{Z}}\int_{\mathbb{R}}\mathcal{I}_3d\xi\Big|=\Big|\Re\sum_{k\in \mathbb{Z}}\int_{\mathbb{R}}A_k \hat{G} A_k \overline{\hat{u}}_1+A_k\hat{\vth}A_k\overline{ \hat{F}}_1d\xi\Big|\nonumber\\
=&\Big|\Re\sum_{k, l\in \mathbb{Z}}\int_{\mathbb{R}^2}\Big[( \hat{u}_1(k-l, \xi-\eta )il\hat{\vth}(l, \eta) +  \hat{u}_2(k-l, \xi-\eta) i (\eta-l t)\hat{\vth}(l, \eta) )A_k^2\overline{\hat{u}}_1(k, \xi)\non\\
&-A_l^2\hat{\vth}(l, \eta)\big( \hat{u}_1(k-l, \xi-\eta )ik\overline{\hat{u}}_1(k, \xi) +  \hat{u}_2(k-l, \xi-\eta) i (\xi-kt)\overline{\hat{u}}_1(k, \xi) \nonumber\\
&+\f{2il(k-l)}{l^2+(\eta-lt)^2}(\hat{u}_1(k-l, \xi-\eta )k\overline{\hat{u}}_1(k, \xi)+
\hat{u}_2(k-l, \xi-\eta )(\xi-kt)\overline{\hat{u}}_1(k, \xi)  \big) \Big]d\xi d\eta\Big| \nonumber\\
\leq &\sum_{k, l\in \mathbb{Z}}\int_{\mathbb{R}^2}\Big[\Big|l A_k^2 (t, \xi)-kA_l^2 (t, \eta) \Big||\hat{u}_1(k-l, \xi-\eta )||\hat{\vth}(l, \eta)||\overline{\hat{u}}_1(k, \xi) | \nonumber\\
&+\Big|(\eta-lt) A_k^2 (t, \xi)-(\xi-kt) A_l^2 (t, \eta) \Big| |\hat{u}_2(k-l, \xi-\eta )||\hat{\vth}(l, \eta)||\overline{\hat{u}}_1(k, \xi)|\nonumber\\
&+A_l^2 (t, \eta)\f{2|l||k-l||k,\xi-kt|}{l^2+(\eta-lt)^2} |\hat{u}(k-l, \xi-\eta )||\hat{\vth}(l, \eta)||\overline{\hat{u}}_1(k, \xi)|\Big]d\xi d\eta \nonumber\\
\leq& C\sum_{k, l\in \mathbb{Z}}\int_{\mathbb{R}^2}|\hat{u}(k-l, \xi-\eta )||\hat{\vth}(l, \eta)||\overline{\hat{u}}(k, \xi)|\lan t\ran A_k(t, \xi) A_l(t, \eta)A_{k-l}(t, \xi-\eta)\nonumber\\
&\quad  (\lan k, \xi\ran^{\frac{1}{2}-s}+\lan l, \eta\ran^{\frac{1}{2}-s}+\lan k-l, \xi-\eta\ran^{\frac{1}{2}-s}+\lan k-l, (\xi-\eta)-(k-l)t\ran^{\frac{1}{2}-s})  d\xi d\eta \nonumber\\
\leq &\lan t\ran\|A_k \hat{u}_1\|_{L^2_{k, \xi}}\|A_k \hat{ \vth}\|_{L^2_{k, \xi}}\|A_{k}  \hat{u}_1\|_{L^2_{k, \xi}}.
\end{align}

For $I_5$, we use $\hat{u}(k-l, \xi-\eta)=\overline{\hat{u}}(l-k,\eta-\xi)$, $ik A_k\hat{u}_1+i(\xi-kt) A_k\hat{u}_2=0$ in $ \eqref{ub2}_3$, Lemma \ref{Ak} and  Lemma \ref{lemplus}, then
\begin{align}\label{I6}
&\Big|\Re\sum_{k\in \mathbb{Z}}\int_{\mathbb{R}}\mathcal{I}_5 d\xi\Big|=\Big|2\Re\sum_{k\in \mathbb{Z}}\int_{\mathbb{R}}A_k \hat{F} A_k \overline{\hat{u}}d\xi\Big|\nonumber\\
=&\Big|2\Re\sum_{k, l\in \mathbb{Z}}\int_{\mathbb{R}^2}( \hat{u}_1(k-l, \xi-\eta )il\hat{u}(l, \eta) +  \hat{u}_2(k-l, \xi-\eta) i (\eta-l t)\hat{u}(l, \eta) ) A_k^2\overline{\hat{u}}(k, \xi)d\xi d\eta\non\\
&+2\Re\sum_{k\in \mathbb{Z}}\int_{\mathbb{R}}(ik\hat{p}_{NL} A_k^2\overline{\hat{u}}_1(k, \xi)+
i(\xi-kt)\hat{p}_{NL} A_k^2\overline{\hat{u}}_2(k, \xi))
d\xi\Big| \nonumber\\
\leq &\sum_{k, l\in \mathbb{Z}}\int_{\mathbb{R}^2}\Big[\Big|l A_k^2 (t, \xi)-kA_l^2 (t, \eta) \Big||\hat{u}_1(k-l, \xi-\eta )||\hat{u}(l, \eta)||\overline{\hat{u}}(k, \xi) | \nonumber\\
&+\Big|(\eta-lt) A_k^2 (t, \xi)-(\xi-kt) A_l^2 (t, \eta) \Big| |\hat{u}_2(k-l, \xi-\eta )||\hat{u}(l, \eta)||\overline{\hat{u}}(k, \xi)|\nonumber\\
\leq&C \sum_{k, l\in \mathbb{Z}}\int_{\mathbb{R}^2}|\hat{u}(k-l, \xi-\eta )||\hat{u}(l, \eta)||\overline{\hat{u}}(k, \xi)|\lan t\ran A_k(t, \xi) A_l(t, \eta)A_{k-l}(t, \xi-\eta)\nonumber\\
&\quad  (\lan k, \xi\ran^{\frac{1}{2}-s}+\lan l, \eta\ran^{\frac{1}{2}-s}+\lan k-l, \xi-\eta\ran^{\frac{1}{2}-s}+\lan k-l, (\xi-\eta)-(k-l)t\ran^{\frac{1}{2}-s})  d\xi d\eta \nonumber\\
\leq &\lan t\ran\|A_k  \hat{u}\|_{L^2_{k, \xi}}^3.
\end{align}
Summing up \eqref{I4}, \eqref{I5}, \eqref{I6} into   \eqref{rs3} we obtain Proposition \ref{pr1}.
\end{proof}
\begin{proof}[Proof of Proposition \ref{pr01}]
By Proposition \ref{pr1}, we have
\begin{align*}
\f{d}{dt} E(t)\leq C\lan t\ran\|A_k(\hat{u},\hat{\vth})\|_{{L_{k,\xi}^2}}^3,\quad E(t):=\sum_{k\in \mathbb{Z}}\int_{\mathbb{R}} \Big(\big|A_k\hat{u}\big|^2+\g^2\big|A_k\hat{\vth}\big|^2+\Re (A_k\hat{\vth}A_k \overline{\hat{u}}_1)\Big) d\xi.
\end{align*}As $\g>\f12$, $A_k(t, \xi)\sim\lan k, \xi-kt\ran^\frac{1}{2} \lan k, \xi\ran^s $, we have $E(t) \sim \|A_k(\hat{u},\hat{\vth})\|_{{L_{k,\xi}^2}}^2\sim\| \lan\na_L\ran^\f12 (u, \vth)\|_{H^s }^2$. 
Thus $\f{d}{dt} E(t)\leq C\lan t\ran E(t)^{3/2}$, and by Gr\"onwall’s inequality, we have 
$$
E(t)^{1/2}\leq \frac{E(0)^{1/2}}{1-C (t+t^2)E(0)^{1/2}}\leq \f{C\| (u_{in}, \vth_{in})\|_{H^{s+1/2}}}{1-C_{+}(t+t^2)\ran \| (u_{in}, \vth_{in})\|_{H^{s+1/2}}}\leq \f{C\| (u_{in}, \vth_{in})\|_{H^{s+1/2}}}{1-C_{+}(t+t^2)\ran c_1\kappa^\f13},
$$ 
as long as $1-C_{+}(t+t^2)\ran c_1\kappa^\f13>0$, where $C_+>0$ depends only on $\g,s,\e$, and we used $E(0)^{1/2}\sim \| \lan\na_L\ran^\f12 (u, \vth)\|_{H^s }|_{t=0}=\| (u_{in}, \vth_{in})\|_{H^{s+1/2}}\leq c_1\kappa^\f13$.

Now we take $c_1=\f{1}{4C_+}$. Then for $ 0\leq t \leq T_0= \kappa^{-\f16}$, we have
$$C_{+}(t+ t^2 ) c_1\kappa^\f13\leq C_{+}(\kappa^{-\f16}+\kappa^{-\f13})c_1\kappa^{\f13}\leq 2C_{+}c_1 =1/2,$$
and then $\| \lan\na_L\ran^\f12 (u, \vth)(t)\|_{H^s }\le CE(t)^{1/2}\le  { C}\| (u_{in}, \vth_{in})\|_{H^{s+1/2}}$ 
 for $t\leq T_0=\kappa^{-\f16}$.
\end{proof}

\smallskip
\setcounter{equation}{0}

\section{ Property of the Multiplier $\mathcal{M}$ in long time scale $t>\kappa^{-\frac{1}{6}}$}
Recall that $\mathcal{M}(t, k, \xi)= |k_+, \xi-kt|^\frac{1}{2} \lan k, \xi\ran^s\mathcal{A} e^{\mathcal{M}_0}$, where $\mathcal{M}_0=\mathcal{M}_1+\mathcal{M}_2+\mathcal{M}_3$
with $\mathcal{M}_1(t, 0, \xi)=\mathcal{M}_2(t, 0, \xi)=0$ and
\begin{align*}
&\mathcal{M}_1(t, k, \xi)={\psi_{1}}\big(\kappa^{\f13}|k|^{\f23}(\xi/k-t)\big),\ \text{for} \  k\neq 0,\\
&\mathcal{M}_2(t,k, \xi)=C_{\g}\psi_{1-\delta}(\xi/k-t),\ \text{for} \  k\neq 0,\\
&\mathcal{M}_3(t, k, \xi)=\sum_{j\in \mathbb{Z}\setminus\{0\}}
j^{-1}\lan k-j\ran^{-\delta}\psi_{\delta}\Big(\f{\xi-tj}{\lan k-j\ran+|j|}\Big).
\end{align*}Then $\mathcal{M}_j(t, k, \xi) $ ($j=0,1,2,3$) are well defined and smooth for $(k,\xi)\in(\R\setminus\{0\})\times\R$.

\begin{lemma}\label{lem1}If $(k, \xi),(l, \eta)\in\mathbb{Z}\times\R$, $t\geq0$, $\lan k-l\ran\leq \f{|k|+|l|}{20}$ then 
\begin{align*}
&|\mathcal{M}_1(t, k, \xi)-\mathcal{M}_1(t, l, \eta)|\leq C(|\kappa/k|^\f13+|k|^{-1})|k-l, \xi-\eta-(k-l)t|,\\
&|\mathcal{M}_2(t, k, \xi)-\mathcal{M}_2(t, l, \eta)|\leq C|k|^{-1}|k-l, \xi-\eta-(k-l)t|,\\
&|\mathcal{M}_3(t, k, \xi)-\mathcal{M}_3(t, l, \eta)|\leq C|k|^{-1}{|k-l, \xi-\eta|},\\
&|\mathcal{M}_0(t, k, \xi)-\mathcal{M}_0(t, l, \eta)|\leq C(|\kappa/k|^\f13+|k|^{-1})(|k-l, \xi-\eta-(k-l)t|+|k-l, \xi-\eta|).
\end{align*}\end{lemma}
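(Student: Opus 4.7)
The hypothesis $\langle k-l\rangle\leq(|k|+|l|)/20$ forces $k,l$ to be nonzero integers of the same sign, with $|k|\sim|l|$; without loss of generality I take $k,l>0$. Both $\mathcal{M}_1$ and $\mathcal{M}_2$ have the form $\psi_\lambda(F_\lambda(k,\xi-kt))$ with $F_1(k,\zeta)=\kappa^{1/3}k^{-1/3}\zeta$ (for $\lambda=1$) and $F_{1-\delta}(k,\zeta)=C_\gamma\zeta/k$ (for $\lambda=1-\delta$), so in the shifted variables $\xi'=\xi-kt$, $\eta'=\eta-lt$ (for which $\xi'-\eta'=\xi-\eta-(k-l)t$) I telescope
$$\psi_\lambda(F_\lambda(k,\xi'))-\psi_\lambda(F_\lambda(l,\eta'))=\bigl[\psi_\lambda(F_\lambda(k,\xi'))-\psi_\lambda(F_\lambda(k,\eta'))\bigr]+\bigl[\psi_\lambda(F_\lambda(k,\eta'))-\psi_\lambda(F_\lambda(l,\eta'))\bigr].$$
The first bracket bounds trivially (using $\psi_\lambda'=\langle\cdot\rangle^{-1-\lambda}\leq 1$) by $\kappa^{1/3}k^{-1/3}|\xi'-\eta'|$ (resp.\ $Ck^{-1}|\xi'-\eta'|$), which supplies the $|\kappa/k|^{1/3}|k-l,\xi'-\eta'|$ (resp.\ $Ck^{-1}|k-l,\xi'-\eta'|$) piece. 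For the second bracket only $k\mapsto l$ changes; I split into regimes according to whether $|F_\lambda(l,\eta')|\lessgtr 1$. In the small-argument regime, the trivial Lipschitz bound on $\psi_\lambda$ combined with the elementary estimates $|k^{-1/3}-l^{-1/3}|\lesssim k^{-4/3}|k-l|$ and $|1/k-1/l|\lesssim k^{-2}|k-l|$ directly yields $Ck^{-1}|k-l|$. In the large-argument regime I employ the sharper same-sign bound $|\psi_\lambda(a)-\psi_\lambda(b)|\lesssim|a-b|\min(|a|,|b|)^{-1-\lambda}$; the lower bound on $|\eta'|$ forced by $|F_\lambda(l,\eta')|\geq 1$ precisely compensates the extra factor of $|\eta'|$ sitting in $|a-b|$, again giving $Ck^{-1}|k-l|$. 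Adding the two brackets produces the stated inequalities for $\mathcal{M}_1$ and $\mathcal{M}_2$.

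\emph{For $\mathcal{M}_3$.} Because $\mathcal{M}_3$ has no frame-shift structure, the target bound is in $|k-l,\xi-\eta|$ rather than $|k-l,\xi'-\eta'|$, and I telescope through $(k,\eta)$:
$$\mathcal{M}_3(t,k,\xi)-\mathcal{M}_3(t,l,\eta)=\bigl[\mathcal{M}_3(t,k,\xi)-\mathcal{M}_3(t,k,\eta)\bigr]+\bigl[\mathcal{M}_3(t,k,\eta)-\mathcal{M}_3(t,l,\eta)\bigr].$$
The first bracket is handled termwise: the argument of $\psi_\delta$ in the $j$-summand shifts by $(\xi-\eta)/(\langle k-j\rangle+|j|)$, so that summand varies by at most $|j|^{-1}\langle k-j\rangle^{-\delta}|\xi-\eta|/(\langle k-j\rangle+|j|)$, and summing over $j$ after splitting at $|j|\asymp|k|$ yields $Ck^{-1}|\xi-\eta|$, the dominant contribution coming from $j\sim k$. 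For the second bracket, with $\eta$ fixed, I decompose each $j$-summand as
$$\langle k-j\rangle^{-\delta}\psi_\delta(a)-\langle l-j\rangle^{-\delta}\psi_\delta(b)=\langle k-j\rangle^{-\delta}(\psi_\delta(a)-\psi_\delta(b))+(\langle k-j\rangle^{-\delta}-\langle l-j\rangle^{-\delta})\psi_\delta(b),$$
write $a-b=(\eta-tj)(\langle l-j\rangle-\langle k-j\rangle)/[(\langle k-j\rangle+|j|)(\langle l-j\rangle+|j|)]$, and invoke the MVT to get $|\langle k-j\rangle^{-\delta}-\langle l-j\rangle^{-\delta}|\lesssim|k-l|\min(\langle k-j\rangle,\langle l-j\rangle)^{-\delta-1}$. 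Where $|\eta-tj|/(\langle k-j\rangle+|j|)$ is large I exchange one Lipschitz factor for the decay of $\psi_\delta'=\langle\cdot\rangle^{-1-\delta}$ and use boundedness of $\psi_\delta$. Summing the result in the three regimes $|j|\ll|k|$, $|j|\sim|k|$, $|j|\gg|k|$ gives $Ck^{-1}|k-l|$; combined with the first bracket this is the stated bound.

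\emph{For $\mathcal{M}_0$ and main obstacle.} The $\mathcal{M}_0$ estimate follows at once from $|\mathcal{M}_0(t,k,\xi)-\mathcal{M}_0(t,l,\eta)|\leq\sum_{i=1}^{3}|\mathcal{M}_i(t,k,\xi)-\mathcal{M}_i(t,l,\eta)|$ and the three bounds just proved. The main obstacle is the $\mathcal{M}_3$ estimate: unlike the single-term $\mathcal{M}_1,\mathcal{M}_2$ with explicit frame-shift structure, $\mathcal{M}_3$ is an infinite sum whose exact factor $|k|^{-1}$ is produced by the resonant window $j\sim k$, and the analysis must carefully balance the $j^{-1}$ prefactor against $\langle k-j\rangle^{-\delta}/(\langle k-j\rangle+|j|)$ while using both the boundedness of $\psi_\delta$ and the decay of $\psi_\delta'$ to control the tails $|j|\ll|k|$ and $|j|\gg|k|$.
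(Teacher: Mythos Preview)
Your approach is correct and reaches the same bounds, but the paper's argument is more direct. After making the same frame shift you do for $\mathcal{M}_1,\mathcal{M}_2$ (writing $\mathcal{M}_i(t,k,\xi)=\widetilde{\mathcal{M}}_i(k,\xi-kt)$), the paper simply observes that each $\widetilde{\mathcal{M}}_i$ (and $\mathcal{M}_3$ itself) extends smoothly to real $k$, and applies the mean value theorem once along the segment from $(k,\xi-kt)$ to $(l,\eta-lt)$ (respectively from $(k,\xi)$ to $(l,\eta)$ for $\mathcal{M}_3$). The entire proof then reduces to uniform gradient bounds on that segment: $|\partial_k\widetilde{\mathcal{M}}_1|\leq|k_1|^{-1}$, $|\partial_\xi\widetilde{\mathcal{M}}_1|\leq|\kappa/k_1|^{1/3}$, both partials of $\widetilde{\mathcal{M}}_2$ bounded by $C|k_2|^{-1}$, and both partials of $\mathcal{M}_3$ bounded by $C|k_3|^{-1}$ via the single summation estimate $\sum_{j\neq0}|j|^{-1}\langle k_3-j\rangle^{-1-\delta}\lesssim|k_3|^{-1}$ (essentially your first-bracket sum). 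Your telescoping plus small/large-argument dichotomy, together with the refined bound $|\psi_\lambda(a)-\psi_\lambda(b)|\lesssim|a-b|\min(|a|,|b|)^{-1-\lambda}$, amounts to reconstructing these derivative estimates by hand; this works, but the paper's route bypasses the case splits entirely. The gain is most visible for $\mathcal{M}_3$: by differentiating the whole series in the continuous variable $k$, the paper avoids your second-bracket analysis (the decomposition $\langle k-j\rangle^{-\delta}(\psi_\delta(a)-\psi_\delta(b))+(\langle k-j\rangle^{-\delta}-\langle l-j\rangle^{-\delta})\psi_\delta(b)$ and the separate treatment of $j$ near versus far from $k,l$), which is the most delicate part of your plan.
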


\begin{proof}
We can write\begin{align*}
&\mathcal{M}_1(t, k, \xi)=\widetilde{\mathcal{M}}_1(k, \xi-kt),\quad  \text{with} \quad
\widetilde{\mathcal{M}}_1(k, \xi)=\psi_1\big(\kappa^{\f13}|k|^{\f23}\xi/k\big),\ \text{for} \  k\neq 0,\\
&\mathcal{M}_2(t, k, \xi)=\widetilde{\mathcal{M}}_2(k, \xi-kt),\quad  \text{with} \quad
\widetilde{\mathcal{M}}_2(k, \xi)=C_{\g}\psi_{1-\delta}(\xi/k),\ \text{for} \  k\neq 0.
\end{align*}
In this case $k\sim l $, $kl> 0$. By symmetry,  we can assume that $k<l$.
 By mean value theorem, there exists   {$(k_j, \xi_j)\in\R^2$ with $ k\leq k_j\leq l$ for $j=1,2,3$},  
 such that 
\begin{align*}
&|\mathcal{M}_1(t, k, \xi)-\mathcal{M}_1(t, l, \eta)|=|\widetilde{\mathcal{M}}_1( k, \xi-kt)-\widetilde{\mathcal{M}}_1( l, \eta-lt)|\\
\leq &|\p_k \widetilde{\mathcal{M}}_1(k_1, \xi_1)||k-l|+|\p_\xi \widetilde{\mathcal{M}}_1(k_1, \xi_1)||\xi-\eta-(k-l)t|\\
\leq& |k_1|^{-1}|k-l|+(\kappa/|k_1|)^\f13|\xi-\eta-(k-l)t|\\
\leq& C(|\kappa/k|^\f13+|k|^{-1})|k-l, \xi-\eta-(k-l)t|,
\end{align*}
where we used
\begin{align*}
&|\p_k \widetilde{\mathcal{M}}_1(k_1, \xi_1)|= |3k_1|^{-1}\kappa^{\f13}|k_1|^{-\f13}|\xi_1|/(1+|\kappa/k_1|^{\f23}|\xi_1|^2)\leq  |k_1|^{-1},\\
&|\p_\xi \widetilde{\mathcal{M}}_1(k_1, \xi_1)|= |\kappa/k_1|^{\f13}/(1+|\kappa/k_1|^{\f23}|\xi_1|^2)\leq |\kappa/k_1|^{\f13}.
\end{align*}
And 
\begin{align*}
&|\mathcal{M}_2(t, k, \xi)-\mathcal{M}_2(t, l, \eta)|=|\widetilde{\mathcal{M}}_2( k, \xi-kt)-\widetilde{\mathcal{M}}_2( l, \eta-lt)|\\
\leq &|\p_k \widetilde{\mathcal{M}}_2(k_2, \xi_2)||k-l|+|\p_\xi \widetilde{\mathcal{M}}_2(k_2, \xi_2)||\xi-\eta-(k-l)t|\\
\leq &\f{C}{|k_2|} \Big(|k-l|+|\xi-\eta-(k-l)t|\Big)\leq C|k|^{-1}|k-l, \xi-\eta-(k-l)t|,\end{align*}
where we used (for $0<\delta<\min(s-3/2,1/2)$)
\begin{align*}
|\p_k \widetilde{\mathcal{M}}_2(k_2, \xi_2)|&\leq  C_\g\f{|\xi_2|}{|k_2|^2}\Big\lan \f{\xi_2}{k_2}\Big\ran ^{-2+\delta}\leq \f{C}{|k_2|}, \\
|\p_\xi \widetilde{\mathcal{M}}_2(k_2, \xi_2)|&\leq \f{C_\g}{|k_2|}\Big\lan \f{\xi_2}{k_2}\Big\ran ^{-2+\delta} \leq \f{C}{|k_2|}.
\end{align*}
And 
\begin{align*}
&|\mathcal{M}_3(t, k, \xi)-\mathcal{M}_3(t, l, \eta)|
\leq|\p_k \mathcal{M}_3(t, k_3, \xi_3)||k-l|+|\p_\xi \mathcal{M}_3(t, k_3, \xi_3)||\xi-\eta|\\
&\leq \f{C}{|k_3|} \Big(|k-l|+|\xi-\eta|\Big)\leq C|k|^{-1}{|k-l, \xi-\eta|},
\end{align*}
where we used ({for} $0<\delta<\min(s-3/2,1/2)$)
\begin{align*}
|\p_k \mathcal{M}_3(t, k_3, \xi_3)|& \leq C\sum_{j\in \mathbb{Z}\setminus\{0\}}
|j|^{-1}\lan k_3-j\ran^{-\delta-1}\bigg|\psi_{\delta}\bigg(\f{\xi_3-tj}{\lan k_3-j\ran+|j|}\bigg)\bigg|\\&\quad +C\sum_{j\in \mathbb{Z}\setminus\{0\}} |j|^{-1}\lan k_3-j\ran^{-\delta}\bigg\lan \f{\xi_3-tj}{\lan k_3-j\ran+|j|}\bigg\ran^{-1-\delta} \f{|\xi_3-tj|}{(\lan k_3-j\ran+|j|)^2} \\
&  \leq  C\sum_{j\in \mathbb{Z}\setminus\{0\}}|j|^{-1}\lan k_3-j\ran^{-\delta-1}+|j|^{-1}\lan k_3-j\ran^{-\delta}(\lan k_3-j\ran+|j|)^{-1}\\
&  \leq  C\sum_{j\in \mathbb{Z}\setminus\{0\}} |j|^{-1}\lan k_3-j\ran^{-\delta-1}
 \leq \f{C}{|k_3|}, 
\end{align*}
and
\begin{align*}
&|\p_\xi \mathcal{M}_3(t, k_3, \xi_3)| =\sum_{j\in \mathbb{Z}\setminus\{0\}}
|j|^{-1}\lan k_3-j\ran^{-\delta}\bigg\lan \f{\xi_3-tj}{\lan k_3-j\ran+|j|}\bigg\ran^{-1-\delta} \f{1}{\lan k_3-j\ran+|j|} \\
  \leq&C\sum_{j\in \mathbb{Z}\setminus\{0\}} \f{|j|^{-1}\lan k_3-j\ran^{-\delta}}{\lan k_3-j\ran+|j|}\leq C\sum_{j\in \mathbb{Z}\setminus\{0\}} |j|^{-1}\lan k_3-j\ran^{-\delta-1}
 \leq \f{C}{|k_3|}.
\end{align*}
{This completes the proof of the first three inequalities. Summing up the first three inequalities and using $\mathcal{M}_0=\mathcal{M}_1+\mathcal{M}_2+\mathcal{M}_3$ gives the fourth inequality.}
\end{proof}

\begin{lemma}\label{lem2} For $(k, \xi),(l, \eta)\in\mathbb{Z}\times\R$, $t\geq T_0=\kappa^{-\f16}$, it holds that
\begin{align}\label{M1}
&|l \mathcal{M}^2(t,k, \xi)-k\mathcal{M}^2(t,l, \eta)|\non\\
\leq& C\mathcal{M}(t, k,\xi)\mathcal{M}(t, l,\eta)\mathcal{M}(t, k-l,\xi-\eta) (\lan k, \xi\ran^{\f12-s}+\lan l, \eta\ran^{\f12-s}+\lan k-l, \xi-\eta\ran^{\f12-s})\times\non\\
&(|k|^{\f13}|k-l|^{\f13}+|l|^{\f13}|k-l|^{\f13}+|kl|^{\f13}+\kappa^{\f23}|kl|).
\end{align} 
Moreover, if $k\neq l$ then
\begin{align}\label{M2}
&|(\eta-lt) \mathcal{M}^2(t, k,\xi)|\leq Ce^{\epsilon \kappa^\f13 t}(|l,\eta-lt|+|k,\xi-kt|)\mathcal{M}(t, k,\xi)\mathcal{M}(t, l,\eta)\non\\
&+C(\kappa^{\f16}|l,\eta-lt|+\kappa^{-\f16}|l|^{\f13}+\kappa^{-\f16}|k|^{\f13})\mathcal{M}(t, k,\xi)\mathcal{M}(t, l,\eta)\mathcal{M}(t, k-l,\xi-\eta)\lan l, \eta\ran^{-s}\non\\
&+C\kappa^{-\f13}\mathcal{M}(t, k,\xi)\mathcal{M}(t, l,\eta)\mathcal{M}(t, k-l,\xi-\eta)\lan l, \eta\ran^{-\f{1+\delta}{2}}| k-l, \xi-\eta-(k-l)t|^{-1/2},
\end{align}
and similarly
\begin{align}\label{M3}
&|(\xi-kt) \mathcal{M}^2(t, l, \eta)|\leq Ce^{\epsilon \kappa^\f13 t}(|l,\eta-lt|+|k,\xi-kt|)\mathcal{M}(t, k,\xi)\mathcal{M}(t, l,\eta)\non\\
&+C(\kappa^{\f16}|k,\xi-kt|+\kappa^{-\f16}|l|^{\f13}+\kappa^{-\f16}|k|^{\f13})\mathcal{M}(t, k,\xi)\mathcal{M}(t, l,\eta)\mathcal{M}(t, k-l,\xi-\eta)\lan k, \xi\ran^{-s}\non\\
&+C\kappa^{-\f13}\mathcal{M}(t, k,\xi)\mathcal{M}(t, l,\eta)\mathcal{M}(t, k-l,\xi-\eta)\lan k,\xi\ran^{-\f{1+\delta}{2}}| k-l, \xi-\eta-(k-l)t|^{-1/2}.
\end{align}
\end{lemma}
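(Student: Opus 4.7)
The plan is to extend the strategy of Lemma~\ref{Ak} to the refined multiplier $\mathcal{M}$, treating the new ingredients $\mathcal{A}\cdot e^{\mathcal{M}_0}$ as perturbations whose differences are quantitatively controlled by Lemma~\ref{lem1}. Throughout, I will split the frequency configuration along the familiar dichotomy comparing $\lan k-l,\xi-\eta\ran$ with $\lan k,\xi\ran+\lan l,\eta\ran$, exactly as in the proof of Lemma~\ref{Ak}.

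\textbf{Step 1: Proof of \eqref{M1}.} In the unbalanced regime $\lan k-l,\xi-\eta\ran\geq (\lan k,\xi\ran+\lan l,\eta\ran)/4$, I would estimate
\[
|l\mathcal{M}^2(t,k,\xi)-k\mathcal{M}^2(t,l,\eta)|\leq |l|\mathcal{M}^2(t,k,\xi)+|k|\mathcal{M}^2(t,l,\eta)
\]
and absorb a full factor of $\mathcal{M}$ on each side into the dominant $\mathcal{M}(t,k-l,\xi-\eta)$. Since $\mathcal{M}_0$ is bounded, $e^{2\mathcal{M}_0}$ is a harmless constant; a case check on the indicator variables shows that $\mathcal{A}^2(t,k,\xi)\leq\mathcal{A}(t,k,\xi)\mathcal{A}(t,l,\eta)\mathcal{A}(t,k-l,\xi-\eta)$ always holds (the ratio is $e^{\epsilon\kappa^{1/3}t[\mathbf{1}_{k\neq 0}-\mathbf{1}_{l\neq 0}-\mathbf{1}_{k-l\neq 0}]-\kappa^{-1/3}t^{-2}}\leq 1$, because $k=l+(k-l)$ rules out simultaneous $l=k-l=0$ when $k\neq 0$). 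The remaining Sobolev weight manipulation then proceeds verbatim as in Lemma~\ref{Ak}. In the balanced regime, which forces $k\sim l$ with the same sign, I would telescope $l\mathcal{M}^2(t,k,\xi)-k\mathcal{M}^2(t,l,\eta)$ across its three factors: the polynomial prefactor $|k_+,\xi-kt|\lan k,\xi\ran^{2s}$ is handled exactly as in Lemma~\ref{Ak}; the factor $\mathcal{A}^2$ is identical on both sides since $k,l\neq 0$; and the difference of $e^{2\mathcal{M}_0}$ is estimated through Lemma~\ref{lem1}, supplying the crucial weight $(|\kappa/k|^{1/3}+|k|^{-1})$. Multiplying by $|l|\sim|k|$ converts these into $\kappa^{1/3}|kl|^{1/3}$ and $1$; combined with $|k-l,\xi-\eta|+|k-l,\xi-\eta-(k-l)t|$ and the usual Sobolev gains, this reassembles into the announced $|k|^{1/3}|k-l|^{1/3}+|l|^{1/3}|k-l|^{1/3}+|kl|^{1/3}+\kappa^{2/3}|kl|$.

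\textbf{Step 2: Proof of \eqref{M2} (and symmetrically \eqref{M3}).} No cancellation is available here, so I would bound $\mathcal{M}(t,k,\xi)/\mathcal{M}(t,l,\eta)$ directly. When $k\neq 0$ and $l=0$ the ratio of $\mathcal{A}$'s accounts for the $e^{\epsilon\kappa^{1/3}t}$ prefactor in the first RHS term, and $|\eta-lt|\leq|l,\eta-lt|$ finishes the unbalanced subcase. In the balanced subcase $k\sim l$, Lemma~\ref{lem1} controls the $e^{\mathcal{M}_0}$ ratio, and the elementary bound $\kappa^{-1/3}t^{-2}\leq \kappa^{1/6}$ for $t\geq T_0=\kappa^{-1/6}$ converts the subleading piece of $\mathcal{A}$ into the $\kappa^{\pm 1/6}$ prefactors of the second RHS term. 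The $\lan l,\eta\ran^{-s}$ decay is recovered by absorbing one Sobolev weight into $\mathcal{M}(t,k-l,\xi-\eta)$, while the third term carrying $\lan l,\eta\ran^{-(1+\delta)/2}|k-l,\xi-\eta-(k-l)t|^{-1/2}$ matches the echo-correction contribution coming from $\mathcal{M}_3$, with the $\kappa^{-1/3}$ prefactor produced by saturating the $|\kappa/k|^{1/3}$-part of Lemma~\ref{lem1} against $|k|\sim|l|$.

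\textbf{Main obstacle.} The delicate point will be the balanced-case bookkeeping for \eqref{M2}--\eqref{M3}: one has to split further according to which of the three regimes (Sobolev gain, $\mathcal{M}_1$-dissipation gain, or $\mathcal{M}_3$-echo gain) dominates in the $\mathcal{M}_0$-difference, and then thread the shifted weights $\lan\cdot,\cdot-\cdot t\ran$ and the unshifted $\lan\cdot,\cdot\ran$ through the estimate so that each of the three announced RHS terms is activated only where it is finite. Extra care is needed to absorb the $\kappa^{-1/3}t^{-2}$ piece of $\mathcal{A}^2$ into the $\kappa^{-1/3}$ prefactor of the echo term without contaminating the cleaner first two bounds, and to ensure that the interaction between the $\mathcal{M}_2$-factor (which produces $|k|^{-1}$) and the $\mathcal{M}_1$-factor (which produces $|\kappa/k|^{1/3}$) yields exactly the stated combination without lossy intermediate estimates.
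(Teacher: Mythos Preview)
Your outline for \eqref{M1} has a real gap in the choice of splitting. You propose to split along $\lan k-l,\xi-\eta\ran$ versus $(\lan k,\xi\ran+\lan l,\eta\ran)/4$ and claim that the balanced regime ``forces $k\sim l$ with the same sign''. This is false: take $k=0$, $l=1$, $\xi=\eta$ large; your balanced condition holds while $k\not\sim l$. Since Lemma~\ref{lem1} requires $\lan k-l\ran\leq (|k|+|l|)/20$, you cannot invoke it under your splitting. The paper instead splits along $\lan k-l,\xi-\eta-(k-l)t\ran+\lan k-l,\xi-\eta\ran$ versus $(|k|+|l|)/10$; in Case~2 this directly yields both the hypothesis of Lemma~\ref{lem1} and the arithmetic bound $(|k-l,\xi-\eta-(k-l)t|+|k-l,\xi-\eta|)^{1/2}\leq (|k|+|l|)^{1/2}$, which is what finally converts the Lemma~\ref{lem1} output into the announced $|kl|^{1/3}+\kappa^{2/3}|kl|$ factors.

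Your plan for \eqref{M2}--\eqref{M3} misidentifies where each of the three right-hand terms comes from and is more complicated than necessary. The paper never uses Lemma~\ref{lem1} here; it only uses the crude equivalence $\mathcal{M}\sim\lan k,\xi-kt\ran^{1/2}\lan k,\xi\ran^s e^{\epsilon\kappa^{1/3}t\mathbf{1}_{k\neq 0}}$. In the unbalanced case $\lan k-l,\xi-\eta\ran\geq(\lan k,\xi\ran+\lan l,\eta\ran)/4$ one is left, after routine reductions, with $|\eta-lt|^{1/2}\lan l,\eta\ran^{-s}$ plus $|\eta-lt|e^{-\epsilon\kappa^{1/3}t\mathbf{1}_{l\neq0}}|k-l,\xi-\eta-(k-l)t|^{-1/2}\lan l,\eta\ran^{-s}$. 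The second term of \eqref{M2} arises from the elementary splitting $|\eta-lt|^{1/2}\leq\kappa^{1/6}|\eta-lt|+\kappa^{-1/6}$ together with $1\leq|k-l|^{1/3}\leq|k|^{1/3}+|l|^{1/3}$; the third term arises from $|\eta-lt|\leq C\kappa^{-1/3}\lan l,\eta\ran e^{\epsilon\kappa^{1/3}t\mathbf{1}_{l\neq0}}$ (since $1+t\mathbf{1}_{l\neq0}\lesssim\kappa^{-1/3}e^{\epsilon\kappa^{1/3}t\mathbf{1}_{l\neq0}}$), not from $\mathcal{M}_3$. In the balanced case the paper simply bounds $|\eta-lt|\mathcal{M}(t,k,\xi)/\mathcal{M}(t,l,\eta)\leq C|\eta-lt|^{1/2}\lan k,\xi-kt\ran^{1/2}e^{\epsilon\kappa^{1/3}t}$, which produces the first right-hand term. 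Your proposed mechanism via the $\kappa^{-1/3}t^{-2}$ piece of $\mathcal{A}$ and the $|\kappa/k|^{1/3}$ piece of Lemma~\ref{lem1} is not how the $\kappa^{\pm1/6}$ and $\kappa^{-1/3}$ prefactors arise; note also that $\kappa^{-1/3}t^{-2}\leq 1$ (not $\kappa^{1/6}$) for $t\geq T_0$.
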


\begin{proof}
We first proof \eqref{M1}. Recall that $\mathcal{M}(t, k, \xi)\sim \lan k, \xi-kt\ran^\frac{1}{2} \lan k, \xi\ran^se^{\epsilon\kappa^\f13 t\mathbf{1}_{k\neq 0}}$.

\textbf{Case 1:}
$\langle k-l, \xi-\eta-(k-l)t\rangle+\langle k-l, \xi-\eta\rangle\geq (|k|+|l|)/10$.
In this case, we have
 \begin{align*}
&\f{|l| \mathcal{M}(t, k, \xi)}{\mathcal{M}(t, l, \eta)\mathcal{M}(t, k-l, \xi-\eta)}\\
\leq&\f{|l|\lan k, \xi-kt\rangle^\frac{1}{2}e^{\epsilon\kappa^\f13 t(\mathbf{1}_{k\neq 0}-\mathbf{1}_{l\neq 0}-\mathbf{1}_{k-l\neq 0})} }{\lan l, \eta-lt\rangle^\frac{1}{2}\lan k-l, (\xi-\eta)-(k-l)t\rangle^\frac{1}{2} } \f{\lan k, \xi\ran^s }{\lan l, \eta\ran^s\lan k-l, \xi-\eta\ran^s} \\
\leq&\bigg(\f{C|l|  }{ \lan l, \eta-lt\ran^\frac{1}{2}}+\f{C|l|  }{\lan k-l, \xi-\eta-(k-l)t\ran^\frac{1}{2} } \bigg)\bigg(\f{1}{\lan l, \eta\ran^s}+\f{1}{\lan k-l, \xi-\eta\ran^s}\bigg)\\
\leq&  
\f{C|l|}{\lan l, \eta\ran^s}+C\bigg(|l|^\frac{1}{2}+\f{|l|^\frac{1}{2}(\langle k-l, \xi-\eta-(k-l)t\rangle^\frac{1}{2}+\langle k-l, \xi-\eta\rangle^\frac{1}{2})  }{\lan k-l, \xi-\eta-(k-l)t\ran^\frac{1}{2} } \bigg)\f{1}{\lan k-l, \xi-\eta\ran^s}\\
\leq& \f{C|l|}{\lan l, \eta\ran^s}+\f{C|l|^\frac{1}{2}}{\lan k-l, \xi-\eta\ran^s}+\f{C|l|^\frac{1}{2}}{\lan k-l, \xi-\eta\ran^{s-1/2}}\leq C|l|^\frac{1}{2}(\lan l, \eta\ran^{\frac{1}{2}-s}+\lan k-l, \xi-\eta\ran^{\frac{1}{2}-s}),
\end{align*}
where we used $\mathbf{1}_{k\neq 0}-\mathbf{1}_{l\neq 0}-\mathbf{1}_{k-l\neq 0}\leq0 $  and
 \begin{align}\label{f1}
&\lan k, \xi-kt\ran\leq \lan l, \eta-lt\rangle+\lan k-l, \xi-\eta-(k-l)t\ran\non\\
&\leq  \lan l, \eta-lt\rangle\lan k-l, \xi-\eta-(k-l)t\ran\Big(\f{1}{\lan k-l, \xi-\eta-(k-l)t\ran}+\f{1}{\lan l, \eta-lt\rangle}\Big),\non\\
&\lan k, \xi\ran
\leq\lan l, \eta\ran+\lan k-l, \xi-\eta\ran\leq  \lan l, \eta\ran \lan k-l, \xi-\eta\ran\Big(\f{1}{\lan k-l, \xi-\eta\ran}+\f{1}{\lan l, \eta\ran}\Big).
\end{align}
Similarly 
\begin{align*}
\f{|k| \mathcal{M}(t, l, \eta)}{\mathcal{M}(t, k, \xi)\mathcal{M}(t, k-l, \xi-\eta)}
\leq C|k|^\frac{1}{2}(\lan k, \xi\ran^{\frac{1}{2}-s}+\lan k-l, \xi-\eta\ran^{\frac{1}{2}-s}).
\end{align*}
We  can get  
\begin{align*}
&\f{|l \mathcal{M}^2(t, k, \xi)-k\mathcal{M}^2(t, l, \eta)|}{\mathcal{M}(t, k, \xi) \mathcal{M}(t, l, \eta)\mathcal{M}(t, k-l, \xi-\eta) }\\
\leq &\f{|l| \mathcal{M}(t, k, \xi)}{\mathcal{M}(t, l, \eta)\mathcal{M}(t, k-l, \xi-\eta)}+\f{|k| \mathcal{M}(t, l, \eta)}{\mathcal{M}(t, k, \xi)\mathcal{M}(t, k-l, \xi-\eta)}\\
\leq& C(|k|^\frac{1}{2}+|l|^\frac{1}{2})(\lan k, \xi\ran^{\f12-s}+\lan l, \eta\ran^{\f12-s}+\lan k-l, \xi-\eta\ran^{\frac{1}{2}-s})
\end{align*}
without using symmetry. Then \eqref{M1} follows by 
\begin{align*}
|k|^\frac{1}{2}+|l|^\frac{1}{2}\leq |k|^\frac{2}{3}+|l|^\frac{2}{3}\leq |k|^{\frac{1}{3}}(|l|^{\frac{1}{3}}+|k-l|^{\frac{1}{3}})+|l|^{\frac{1}{3}}(|k|^{\frac{1}{3}}+|k-l|^{\frac{1}{3}}).
\end{align*}

\textbf{Case 2:}  
$\langle k-l, \xi-\eta-(k-l)t\rangle+\langle k-l, \xi-\eta\rangle\leq (|k|+|l|)/10$.
(In this case $\langle k-l\rangle\leq (|k|+|l|)/20$, $\langle k,\xi\rangle\sim\langle l,\eta\rangle\geq\langle k-l, \xi-\eta\rangle $, 
$\lan k, \xi-kt\ran\sim\lan l, \eta-lt\ran$, $1\leq|k|\sim|l|$, 
$k\neq0$, $l\neq0$,  $ \mathcal{A}(t, k, \xi)=\mathcal{A}(t, l, \eta)=e^{\epsilon \kappa^\f13 t+\kappa^{-\f13} t^{-2}}=:\mathcal{A}_*$.)


Recall that $\mathcal{M}(t, k, \xi)= |k_+, \xi-kt|^\frac{1}{2} \lan k, \xi\ran^s\mathcal{A} e^{\mathcal{M}_0}$. We divide
\begin{align}\label{T''}
&|l  \mathcal{M}^2(t, k, \xi)-k \mathcal{M}^2(t, l, \eta)|\\ \non
= &\mathcal{A}_*^2\Big| l|k_+, \xi-kt| \lan k, \xi\ran^{2s} e^{2 \mathcal{M}_0(t, k, \xi)}-k |l_+, \eta-lt| \lan l, \eta\ran^{2s} e^{2 \mathcal{M}_0(t, l, \eta)}\Big|\leq  \mathcal{A}_*^2 ( \cT^*_1+\cT^*_2+\cT^*_3),
\end{align}
where 
\begin{align*}
&\cT^*_1= |k_+, \xi-kt|\lan k, \xi\ran^{2s}\Big| l e^{2  \mathcal{M}_0 (t, k, \xi)}-k e ^{2  \mathcal{M}_0 (t, l, \eta)}\Big|,\\
&\cT^*_2=\Big||k_+, \xi-kt| -|l_+, \eta-lt|\Big| \lan k, \xi\ran^{2s} |k|  e ^{2  \mathcal{M}_0 (t, l, \eta)},\\
&\cT^*_3= |k|e ^{2  \mathcal{M}_0 (t, l, \eta)}|l_+, \eta-lt| \Big|\lan l, \eta\ran^{2s}- \lan k, \xi\ran^{2s}\Big|.
\end{align*}
Then we have
\begin{align*}
\cT^*_1&=|k_+, \xi-kt|\lan k, \xi\ran^{2s} \Big| l e^{2  \mathcal{M}_0 (t, k, \xi)}-k e^{2  \mathcal{M}_0 (t, l, \eta)}\Big|\\
&\leq |k_+, \xi-kt|\lan k, \xi\ran^{2s}(|k-l|e^{2  \mathcal{M}_0 (t, k, \xi)}+|k||e^{2  \mathcal{M}_0 (t, k, \xi)}-e^{2 \mathcal{M}_0 (t, l, \eta)}|)\\
&\leq C\lan k, \xi-kt\ran\lan k, \xi\ran^{2s}(|k-l|+|k|| \mathcal{M}_0 (t, k, \xi)- \mathcal{M}_0 (t, l, \eta)|),\end{align*}
and
\begin{align*}
\cT^*_2=\Big||k_+, \xi-kt| -|l_+, \eta-lt|\Big| \lan k, \xi\ran^{2s} |k|  e^{2  \mathcal{M}_0 (t, l, \eta)}
\leq C\lan k-l, (\xi-\eta)-(k-l)t\ran \lan k, \xi\ran^{2s} |k|,
\end{align*}and\begin{align*}
 \cT^*_3&=|k|e ^{2  \mathcal{M}_0 (t, l, \eta)}|l_+, \eta-lt| \Big|\lan l, \eta\ran^{2s}- \lan k, \xi\ran^{2s}\Big|\\
&\leq C |k|\lan l, \eta-lt\ran\Big|\lan l, \eta\ran^{2s}- \lan k, \xi\ran^{2s}\Big|\\
&\leq C|k|\lan l, \eta-lt\ran\lan k-l, \xi-\eta\ran\lan k, \xi\ran^{2s-1}\leq C\lan l, \eta-lt\ran\lan k-l, \xi-\eta\ran\lan k, \xi\ran^{2s}.
\end{align*}
Summing up, by \eqref{T''} and Lemma \ref{lem1},  we have 
\begin{align*}
&\f{|l  \mathcal{M}^2(t, k,  \xi)-k \mathcal{M}^2(t, l, \eta)|}{{ \mathcal{A}_*^2}\lan k, \xi-kt\ran^{\f12}\lan k, \xi\ran^{s}\lan l, \eta-lt\ran^{\f12}\lan l, \eta\ran^{s}\lan k-l, (\xi-\eta)-(k-l)t\ran^{\f12}\lan k-l, \xi-\eta\ran^{s}}\\
\leq& \f{C\lan k, \xi-kt\ran^{\f12}\lan k, \xi\ran^s |k-l|}{\lan l, \eta-lt\ran^{\f12}\lan l, \eta\ran^s\lan k-l, \xi-\eta\ran^{s}\lan k-l, (\xi-\eta)-(k-l)t\ran^{\f12}}\\
&+ \f{C| k|{\lan k, \xi\ran^s} \lan k-l, (\xi-\eta)-(k-l)t\ran^{\f12}}{\lan k, \xi-kt\ran^{\f12}\lan l, \eta-lt\ran^{\f12}\lan l, \eta\ran^s\lan k-l, \xi-\eta\ran^{s}}\\
&+\f{C|k|\lan k, \xi-kt\ran^\f12\lan k, \xi\ran^s}{\lan l, \eta-lt\ran^\f12  \lan k-l, (\xi-\eta)-(k-l)t\ran^{\f12}\lan l, \eta\ran^s\lan k-l, \xi-\eta\ran^{s}}
|M_0 (t, k, \xi)-M_0 (t, l, \eta)|\\
& +\f{C\lan l, \eta-lt\ran^\f12\lan k, \xi\ran^{s}}{ \lan k, \xi-kt\ran^\f12 \lan k-l, (\xi-\eta)-(k-l)t\ran^{\f12}\lan l, \eta\ran^s\lan k-l, \xi-\eta\ran^{s-1}} \\
\leq&  C\big(|k-l|+\lan k-l, (\xi-\eta)-(k-l)t\ran^{\f12}\big)\lan k-l, \xi-\eta\ran^{-s}\\
&+C\f{|k|(|\nu/k|^\f13+|k|^{-1})(|k-l, \xi-\eta-(k-l)t|+|k-l, \xi-\eta|)}{\lan k-l, (\xi-\eta)-(k-l)t\ran^{\f12}\lan k-l, \xi-\eta\ran^s}\\& +C {\lan k-l, \xi-\eta\ran^{1-s}} \\
\leq&C\big(|k-l|+(|k|+|l|)^{\f12}\big)\lan k-l, \xi-\eta\ran^{-s}+C{\lan k-l, \xi-\eta\ran^{1-s}}\\
&+C(|\nu k^2|^\f13+1)(|k-l, \xi-\eta-(k-l)t|+|k-l, \xi-\eta|)^{\f12}\lan k-l, \xi-\eta\ran^{\f12-s}\\
\leq& C\lan k-l, \xi-\eta\ran^{1-s}+C(|\nu k^2|^\f13+1)(|k|+|l|)^{\f12}\lan k-l, \xi-\eta\ran^{\f12-s}\\
\leq& C(1+|\nu k^2|^\f13)(|k|+|l|)^{\f12}\lan k-l, \xi-\eta\ran^{\f12-s}\\
\leq &C(1+|\nu k^2|^\f23)(|k|+|l|)^{\f23}\lan k-l, \xi-\eta\ran^{\f12-s}\\
\leq &C(|l|^\f13|k|^\f13+\kappa^{\f23} |kl|)\lan k-l, \xi-\eta\ran^{\f12-s}.
\end{align*}
Here we used  \begin{align*}
 &\langle k,\xi\rangle\sim\langle l,\eta\rangle, \quad
\lan k, \xi-kt\ran\sim\lan l, \eta-lt\ran,\quad |k|\sim|kl|^{\f12}\leq \lan k, \xi-kt\ran^{\f12}\lan l, \eta-lt\ran^{\f12},\\
&|k-l, \xi-\eta-(k-l)t|+|k-l, \xi-\eta|\leq \lan k-l, (\xi-\eta)-(k-l)t\ran\lan k-l, \xi-\eta\ran,\\
&(|k-l, \xi-\eta-(k-l)t|+|k-l, \xi-\eta|)^{\f12}\leq (|k|+|l|)^{\f12},\\
&\lan k-l, \xi-\eta\ran^{1-s}\leq (|k|+|l|)^{\f12}\lan k-l, \xi-\eta\ran^{\f12-s}.\end{align*}
Then  \eqref{M1} follows from 
 \begin{align*}
 &\mathcal{M}(t, k,  \xi)=|k_+, \xi-kt|^\frac{1}{2} \lan k, \xi\ran^s  \mathcal{A}_* e^{ \mathcal{M}_0(t, k, \xi)}\sim\mathcal{A}_*|k_+, \xi-kt|^\frac{1}{2} \lan k, \xi\ran^s\sim\mathcal{A}_*\lan k, \xi-kt\ran^\frac{1}{2} \lan k, \xi\ran^s,\\
 &\mathcal{M}(t, l,  \eta)\sim\mathcal{A}_*\lan l, \eta-lt\ran^\frac{1}{2} \lan l,  \eta\ran^s,\\
&\mathcal{M}(t, k-l,  \xi-\eta)\geq|(k-l)_+, \xi-\eta-(k-l)t|^\frac{1}{2} \lan k-l,  \xi-\eta\ran^s   e^{ \mathcal{M}_0(t, k-l,  \xi-\eta)}\\
&\sim\lan k-l, \xi-\eta-(k-l)t\ran^\frac{1}{2} \lan k-l,  \xi-\eta\ran^s.\end{align*}

 Now we consider  \eqref{M2}. {We assume $k\neq l$, then $ (k-l)_+=|k-l|$.}
 
 \textbf{Case 1:}
$\langle k-l,\xi-\eta\rangle\geq (\lan k, \xi\ran+\langle l,\eta\rangle)/4$.

Recall that $\mathcal{M}(t, k, \xi)\sim \lan k, \xi-kt\ran^\frac{1}{2} \lan k, \xi\ran^se^{\epsilon\kappa^\f13 t\mathbf{1}_{k\neq 0}}$, $\mathbf{1}_{k\neq 0}\leq \mathbf{1}_{k-l\neq 0}=1$, we have
 \begin{align*}
&\f{|\eta-lt| \mathcal{M}(t, k, \xi)}{\mathcal{M}(t, l, \eta)\mathcal{M}(t, k-l, \xi-\eta)}\\
\leq&\f{C|\eta-lt|\lan k, \xi-kt\ran^\frac{1}{2} e^{-\epsilon \kappa^\f13 t\mathbf{1}_{l\neq0}}}{\lan l, \eta-lt\ran^\frac{1}{2}\lan k-l, \xi-\eta-(k-l)t\ran^\frac{1}{2} } \f{\lan k, \xi\ran^s }{\lan l, \eta\ran^s\lan k-l, \xi-\eta\ran^s} \\
\leq&C \bigg(\f{|\eta-lt|}{\lan l, \eta-lt\ran^\frac{1}{2}} +\f{|\eta-lt|e^{-\epsilon \kappa^\f13 t\mathbf{1}_{l\neq0}} }{|(k-l)_+, (\xi-\eta)-(k-l)t|^\frac{1}{2} } \bigg)\f{1}{\lan l, \eta\ran^s}\\
\leq & C\bigg(|\eta-lt|^\frac{1}{2} +\f{\kappa^{-\f13}\lan l, \eta\ran }{\lan k-l, \xi-\eta-(k-l)t\ran^\frac{1}{2} } \bigg) \f{1}{\lan l, \eta\ran^s} \non\\
= & \f{C|\eta-lt|^\frac{1}{2}}{\lan l, \eta\ran^s}+\f{C\kappa^{-\f13}}{|k-l, (\xi-\eta)-(k-l)t|^\frac{1}{2}\lan l, \eta\ran^{s-1}}\\
\leq & \f{C\kappa^{\f16}|\eta-lt|+\kappa^{-\f16}(|k|^{\f13}+|l|^{\f13})}{\lan l, \eta\ran^s}+\f{C\kappa^{-\f13}}{|k-l, (\xi-\eta)-(k-l)t|^\frac{1}{2}\lan l, \eta\ran^{\f{1+\delta}{2}}},
\end{align*}
where we use (as $k\neq l$), $s-1>1/2+\delta>\f{1+\delta}{2}$,
\begin{align*}
&|\eta-lt|\leq \lan l, \eta\ran(1+t\mathbf{1}_{l\neq0})\leq C\kappa^{-\f13}\lan l, \eta\ran e^{\epsilon \kappa^\f13 t\mathbf{1}_{l\neq0}},\quad 1\leq |k-l|^{\f13}\leq |k|^{\f13}+|l|^{\f13},\\
&\lan k-l, \xi-\eta-(k-l)t\ran\sim|k-l, (\xi-\eta)-(k-l)t|. 
\end{align*}

 \textbf{Case 2:}
$\langle k-l,\xi-\eta\rangle\leq (\langle k,\xi\rangle+\langle l,\eta\rangle)/4$. (In this case $\langle k,\xi\rangle\sim\langle l,\eta\rangle $.)

Recall that $\mathcal{M}(t, k, \xi)\sim \lan k, \xi-kt\ran^\frac{1}{2} \lan k, \xi\ran^se^{\epsilon\kappa^\f13 t\mathbf{1}_{k\neq 0}}$, $\mathbf{1}_{k\neq 0}- \mathbf{1}_{l\neq 0}\leq1$, we have
 \begin{align*}
&\f{|\eta-lt| \mathcal{M}(t, k, \xi)}{\mathcal{M}(t, l, \eta)}e^{-\epsilon \kappa^\f13 t}
\leq C \f{|\eta-lt|\lan k, \xi-kt\ran^\frac{1}{2} \lan k, \xi\ran^s }{\lan l, \eta-lt\ran^\frac{1}{2} \lan l, \eta\ran^s}  
\leq C |\eta-lt|^\frac{1}{2}\lan k, \xi-kt\ran^\frac{1}{2} \\
\leq& C (|\eta-lt|+\lan k, \xi-kt\ran) \leq C (|\eta-lt|+| k, \xi-kt|+1) \\
\leq& C (|\eta-lt|+| k, \xi-kt|+|k|+|l|) \leq  C (|l,\eta-lt|+| k, \xi-kt|) ,
\end{align*}
where we used $1\leq |k-l|\leq |k|+|l|$. Combining the 2 cases, we obtain  \eqref{M2}.

By symmetry (switching $(k, \xi)$ and $(l, \eta)$) we have \eqref{M3}.
\end{proof}

\begin{lemma}\label{lem5}
{ Assume $ \hat{f}(0,\xi)=0$}, then it holds that
\begin{align*}
&\sum_{k, l\in \mathbb{Z}}\int_{\mathbb{R}^2}
 |\hat{f}(k-l, \xi-\eta )||\hat{g}(l, \eta)||{\hat{h}}(k, \xi)|\lan l, \eta\ran^{-\f{1+\delta}{2}}|k-l|^{\f{\delta}{2}}| k-l, \xi-\eta-(k-l)t|^{-\f{1+\delta}{2}} d\xi d\eta\\
 \leq& C\|\hat{f}\|_{L^2}\|\hat{g}\|_{L^2}\|\sqrt{\Upsilon(t,k,\xi)}\hat{h}(k, \xi)\|_{L_{k,\xi}^2}.
\end{align*}\end{lemma}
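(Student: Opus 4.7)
The plan is to apply Cauchy--Schwarz twice and invoke the Poisson-type bound from Lemma \ref{Lem4} to reduce the $\eta$-integral of the weight to a summand of $\Upsilon(t,k,\xi)$. The key observation is that, after reindexing $j = k-l$, the inner $\eta$-integral of the squared weight is precisely the form the Poisson lemma is designed to handle, and the resulting prefactor matches the summand of $\Upsilon$ in \eqref{Up}.

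First I would set $j = k-l$ (so $l = k-j$); since $\hat f(0,\cdot) = 0$ by assumption, only $j \neq 0$ contributes, and the left-hand side $S$ equals
\begin{align*}
\sum_k \sum_{j \neq 0}\int_{\R^2} |\hat f(j,\xi-\eta)|\,|\hat g(k-j,\eta)|\,|\hat h(k,\xi)|\,|j|^{\delta/2}\lan k-j,\eta\ran^{-\f{1+\delta}{2}}|j,\xi-\eta-jt|^{-\f{1+\delta}{2}}\,d\xi\,d\eta.
\end{align*}
For each fixed $(k,\xi)$, I apply Cauchy--Schwarz in $(j,\eta)$, separating the product $|\hat f(j,\xi-\eta)|\,|\hat g(k-j,\eta)|$ from the remaining weight. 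The ``pure weight'' factor squared is
$$B(k,\xi) := \sum_{j \neq 0}|j|^\delta \int_\R \lan k-j,\eta\ran^{-(1+\delta)}|j,\xi-\eta-jt|^{-(1+\delta)}\,d\eta,$$
and for each $j \neq 0$ the Poisson inequality \eqref{Poisson} with $a = \lan k-j\ran$, $b = |j|$, $z = \xi - jt$, $\lambda = \delta$ bounds the inner integral by $C(\lan k-j\ran + |j|)^\delta\big/\big((\lan k-j\ran |j|)^\delta\,|\lan k-j\ran + |j|,\xi - jt|^{1+\delta}\big)$. Multiplying by $|j|^\delta$ absorbs the $|j|^{-\delta}$ and leaves $\lan k-j\ran^{-\delta}$, so summing over $j\neq 0$ gives $B(k,\xi) \leq C\,\Upsilon(t,k,\xi)$ by the definition \eqref{Up}.

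Finally I would apply Cauchy--Schwarz in $(k,\xi)$, which bounds $S$ by $\sqrt{C}\,\big(\sum_k \int A(k,\xi)\,d\xi\big)^{1/2} \|\sqrt{\Upsilon}\,\hat h\|_{L^2_{k,\xi}}$, where $A(k,\xi) := \sum_{j \neq 0} \int |\hat f(j,\xi-\eta)|^2|\hat g(k-j,\eta)|^2\,d\eta$. The change of variables $m = k-j$, $\tau = \xi-\eta$ together with Fubini then yield $\sum_k \int A(k,\xi)\,d\xi \leq \|\hat f\|_{L^2}^2\|\hat g\|_{L^2}^2$, which completes the estimate. No serious obstruction is anticipated: the decisive point is simply that Lemma \ref{Lem4} is tuned so that the extra $|j|^\delta$ in the weight cancels the $b^{-\delta}=|j|^{-\delta}$ produced by the Poisson bound, reproducing the summand of $\Upsilon$ exactly.
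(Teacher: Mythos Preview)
Your proposal is correct and follows essentially the same route as the paper's proof: both apply Cauchy--Schwarz to separate $|\hat f||\hat g|$ from $|\hat h|$ times the weight, then invoke Lemma~\ref{Lem4} with $a=\lan k-j\ran$, $b=|j|$, $z=\xi-jt$, $\lambda=\delta$ to bound the $\eta$-integral of the squared weight by the $j$-th summand of $\Upsilon(t,k,\xi)$. The only cosmetic differences are that you reindex $j=k-l$ up front and split the Cauchy--Schwarz into two stages (first in $(j,\eta)$, then in $(k,\xi)$), whereas the paper performs a single H\"older step in all variables and uses the second form of $\Upsilon$ in \eqref{Up}; the resulting bounds are identical.
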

\begin{proof}
We use H\"older's inequality to get
\begin{align*}
&\sum_{k, l\in \mathbb{Z}}\int_{\mathbb{R}^2}
 |\hat{f}(k-l, \xi-\eta )||\hat{g}(l, \eta)||{\hat{h}}(k, \xi)|\lan l, \eta\ran^{-\f{1+\delta}{2}}|k-l|^{\f{\delta}{2}}| k-l, \xi-\eta-(k-l)t|^{-\f{1+\delta}{2}} d\xi d\eta\\
\leq& (\sum_{k, l\in \mathbb{Z}}\int_{\mathbb{R}^2}
 |\hat{f}(k-l, \xi-\eta )|^2|\hat{g}(l, \eta)|^2d\xi d\eta)^\f12 \\
 &\quad (\sum_{k, l\in \mathbb{Z}}\int_{\mathbb{R}^2}| {\hat{h}}(k, \xi)|^2 \lan l, \eta\ran^{-(1+\delta)}|k-l|^{\delta}| k-l, \xi-\eta-(k-l)t|^{-(1+\delta)} d\xi d\eta)^\f12\\
 \leq& C\|\hat{f}\|_{L^2}\|\hat{g}\|_{L^2}\sum_{k\in \mathbb{Z}}\int_{\mathbb{R}}| {\hat{h}}(k, \xi)|^2\sum_{l\in \mathbb{Z}\setminus\{k\}}\f{\lan l\ran^{-\delta}(\lan l\ran+ |k-l|)^{\delta}}{|\lan l\ran +|k-l|,\xi-(k-l)t |^{1+\delta}}d\xi,
 \end{align*}
where we used Lemma \ref{Lem4} (with $a=\lan l\ran, b=|k-l|, z=\xi-(k-l)t, \lambda=\delta$) to
deduce that
\begin{equation*}
   \int_\R \f{|k-l|^{\delta} d\eta}{|\lan l\ran,\eta|^{1+\delta}||k-l|,(\xi-\eta)-(k-l)t|^{1+\delta}}\leq\f{C\lan l\ran^{-\delta}(\lan l\ran+ |k-l|)^{\delta}}{|\lan l\ran +|k-l|,\xi-(k-l)t |^{1+\delta}}.
\end{equation*}We also used $|\lan l\ran,\eta|=\lan l, \eta\ran$. Then the result follows from the definition of $ \Upsilon$ in \eqref{Up}.
\end{proof}
{ Based on \eqref{M2}, \eqref{M3}, Lemma \ref{lem5}, Young's convolution inequality and H\"older's inequality, we have the following Lemma.}
\begin{lemma}\label{lem7}
Assume $ \hat{f}(0,\xi)=0$, $t\geq T_0=\kappa^{-\f16}$, then 
\begin{align*}
&\sum_{k, l\in \mathbb{Z}}\int_{\mathbb{R}^2}
 |\hat{f}(k-l, \xi-\eta )||\hat{g}(l, \eta)||{\hat{h}}(k, \xi)|(|(\eta-lt) \mathcal{M}^2(t, k,\xi)|+|(\xi-kt) \mathcal{M}^2(t, l,\eta)|) d\xi d\eta\\
 \leq& Ce^{\epsilon \kappa^\f13 t}\|\hat{f}\|_{L^1}(\||k,\xi-kt|\mathcal{M}\hat{g}\|_{L^2}\|\mathcal{M}\hat{h}\|_{L^2}+
 \|\mathcal{M}\hat{g}\|_{L^2}\||k,\xi-kt|\mathcal{M}\hat{h}\|_{L^2})\\
 &+C\|\mathcal{M}\hat{f}\|_{L^2}(\kappa^{\f16}\||k,\xi-kt|\mathcal{M}{\hat{g}}\|_{L^2}
 +\kappa^{-\f16}\||k|^{\f13}\mathcal{M}{\hat{g}}\|_{L^2})\|\mathcal{M}\hat{h}\|_{L^2}\\
 &+C\|\mathcal{M}\hat{f}\|_{L^2}\|\mathcal{M}\hat{g}\|_{L^2}(\kappa^{\f16}\||k,\xi-kt|\mathcal{M}{\hat{h}}\|_{L^2}
 +\kappa^{-\f16}\||k|^{\f13}\mathcal{M}{\hat{h}}\|_{L^2})\\
 &+C\kappa^{-\f13}\|\lan \xi/k-t\ran^{\f{\delta}{2}}\mathcal{M}\hat{f}\|_{L^2_{k, \xi}}(\|\mathcal{M}\hat{g}\|_{L^2}\|\sqrt{\Upsilon}\mathcal{M}\hat{h}\|_{L^2}+
 \|\mathcal{M}\hat{h}\|_{L^2}\|\sqrt{\Upsilon}\mathcal{M}\hat{g}\|_{L^2}).
\end{align*}\end{lemma}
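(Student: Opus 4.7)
I would substitute the pointwise bounds \eqref{M2} and \eqref{M3} into the trilinear sum; the right-hand side then splits into three classes of terms according to the multiplicative structure that accompanies the product of $\mathcal{M}$'s:
(i) an \emph{exponential piece}, proportional to $e^{\epsilon\kappa^{1/3}t}(|l,\eta-lt|+|k,\xi-kt|)\mathcal{M}(t,k,\xi)\mathcal{M}(t,l,\eta)$, in which no $\mathcal{M}$-weight is attached to the $f$-frequency;
(ii) a \emph{polynomial piece} of the form $(\kappa^{1/6}|l,\eta-lt|+\kappa^{-1/6}(|l|^{1/3}+|k|^{1/3}))\mathcal{M}(t,k,\xi)\mathcal{M}(t,l,\eta)\mathcal{M}(t,k-l,\xi-\eta)\lan l,\eta\ran^{-s}$ from \eqref{M2}, together with its $\lan k,\xi\ran^{-s}$-mirror from \eqref{M3};
(iii) a \emph{$\Upsilon$-piece} $\kappa^{-1/3}\mathcal{M}\mathcal{M}\mathcal{M}\lan l,\eta\ran^{-(1+\delta)/2}|k-l,\xi-\eta-(k-l)t|^{-1/2}$ from \eqref{M2}, with its $\lan k,\xi\ran^{-(1+\delta)/2}$-mirror from \eqref{M3}.

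For class (i) I would assign $|\hat{f}|$ to $L^1_{k,\xi}$ and distribute $|l,\eta-lt|$ onto $\mathcal{M}|\hat{g}|$ and $|k,\xi-kt|$ onto $\mathcal{M}|\hat{h}|$; Young's inequality $L^1\ast L^2\to L^2$ together with Cauchy--Schwarz in $(k,\xi)$ then yields the first line of the claim directly.

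For class (ii) the key observation is that $\lan l,\eta\ran^{-s}\in L^2_{l,\eta}$ for $s>3/2$. I would apply Cauchy--Schwarz in $(k,\xi)$ to peel off $\mathcal{M}|\hat{h}|$ and bound the remaining convolution
\[
\bigl\|\mathcal{M}|\hat{f}|\ast\bigl(\lan l,\eta\ran^{-s}\mathcal{M}|\hat{g}|\bigr)\bigr\|_{L^2_{k,\xi}}\leq\|\mathcal{M}\hat{f}\|_{L^2}\,\|\lan\cdot\ran^{-s}\|_{L^2}\,\|\mathcal{M}\hat{g}\|_{L^2}
\]
via Young's $L^2\ast L^1\to L^2$ after using Cauchy--Schwarz to place $\lan l,\eta\ran^{-s}\mathcal{M}|\hat{g}|\in L^1_{l,\eta}$. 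Each size factor $|l,\eta-lt|$, $|l|^{1/3}$, $|k|^{1/3}$ is attached to whichever of $\mathcal{M}|\hat{g}|$ or $\mathcal{M}|\hat{h}|$ shares its frequency variable; running the same argument on the $\lan k,\xi\ran^{-s}$-piece from \eqref{M3} (now isolating $\mathcal{M}|\hat{g}|$ instead) produces the mirrored bounds, and together they recover the middle two lines of the claim.

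For class (iii) I would factor
\[
|k-l,\xi-\eta-(k-l)t|^{-1/2}=\bigl(|k-l|^{-\delta/2}|k-l,\xi-\eta-(k-l)t|^{\delta/2}\bigr)\cdot|k-l|^{\delta/2}|k-l,\xi-\eta-(k-l)t|^{-(1+\delta)/2},
\]
so that the second factor matches exactly the weight of Lemma \ref{lem5}. The parenthesised factor equals $\lan(\xi-\eta)/(k-l)-t\ran^{\delta/2}$, which is well defined because the hypothesis $\hat{f}(0,\cdot)=0$ forces $k-l\neq 0$; I would absorb it into $\mathcal{M}\hat{f}$, producing the weighted function $\lan\xi/k-t\ran^{\delta/2}\mathcal{M}\hat{f}$ (after renaming the $f$-variables). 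Lemma \ref{lem5} applied to this weighted trilinear sum yields the $\|\mathcal{M}\hat{g}\|_{L^2}\|\sqrt{\Upsilon}\mathcal{M}\hat{h}\|_{L^2}$ summand. For the $\lan k,\xi\ran^{-(1+\delta)/2}$-piece from \eqref{M3} I would invoke the symmetric version of Lemma \ref{lem5}, proved by reversing the roles of $(l,\eta)$ and $(k,\xi)$ in the Cauchy--Schwarz step of its proof; this yields the partner $\|\mathcal{M}\hat{h}\|_{L^2}\|\sqrt{\Upsilon}\mathcal{M}\hat{g}\|_{L^2}$. The hard part is precisely class (iii): one must shift exactly the correct $\delta/2$-power of $|k-l,\xi-\eta-(k-l)t|$ and of $|k-l|$ onto $\hat{f}$ so that Lemma \ref{lem5} applies in the form stated, and one must then establish (or invoke) the $g\leftrightarrow h$-mirror of that lemma to handle the \eqref{M3} contribution.
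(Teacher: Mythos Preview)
Your proposal is correct and follows exactly the route the paper indicates: the paper's own proof is the single sentence ``Based on \eqref{M2}, \eqref{M3}, Lemma \ref{lem5}, Young's convolution inequality and H\"older's inequality,'' and you have supplied the details that make that sentence work. Your factorisation $|k-l,\xi-\eta-(k-l)t|^{-1/2}=\lan(\xi-\eta)/(k-l)-t\ran^{\delta/2}\cdot|k-l|^{\delta/2}|k-l,\xi-\eta-(k-l)t|^{-(1+\delta)/2}$ is the right way to bring Lemma \ref{lem5} into play, and the $g\leftrightarrow h$ mirror of that lemma (pairing $|\hat f|^2|\hat h|^2$ in Cauchy--Schwarz, integrating in $\xi$, then summing over $k$ via the substitution $j=k-l\mapsto -j$) indeed reproduces $\Upsilon(t,l,\eta)$.
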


\setcounter{equation}{0}

\section{Energy estimate in long time scale $t>\kappa^{-\frac{1}{6}}$}
The system for the weighted variables  $(\mathcal{M} \hat{u}, \mathcal{M} \hat{\vth})$ read as
\begin{equation}\label{ub3}
 \left\{\begin{array}{l}
(\mathcal{M} \hat{u})_t-q^* \mathcal{M} \hat{u}+\nu(k^2+(\xi-kt)^2) \mathcal{M} \hat{u}\\
\quad-(\f{k^2-(\xi-kt)^2}{k^2+(\xi-kt)^2}, \f{2k(\xi-kt)}{k^2+(\xi-kt)^2})\mathcal{M} \hat{u}_2+\g^2(-\f{k(\xi-kt)}{k^2+(\xi-kt)^2}, \f{k^2}{k^2+(\xi-kt)^2}) \mathcal{M} \hat{\vth} =\mathcal{M}  \hat{F},\\
(\mathcal{M} \hat{\vth})_t-q^*\mathcal{M} \hat{\vth}+\mu(k^2+(\xi-kt)^2) \mathcal{M} \hat{\vth}-\mathcal{M} \hat{u}_2=\mathcal{M} \hat{G},\\
k \mathcal{M} \hat{u}_1+(\xi-kt)  \mathcal{M} \hat{u}_2=0,
\end{array}\right.
\end{equation}
here we define $q^*:=\f{\p_t M}{M}$, then
\begin{align}\label{def:q*}
q^*
=&-\f{ k (\xi-kt)}{2(k^2+(\xi-kt)^2)}+\epsilon {\kappa}^\f13\mathbf{1}_{k\neq0}-2{\kappa}^{-\f13} t^{-3}-\f{{\kappa}^{1/3}|k|^{4/3}}{|k|^{2/3}+{\kappa}^{2/3}(\xi-kt)^2}\non\\
&-C_{\g}\lan\xi/k-t\ran^{-2+\delta}- \Upsilon.
\end{align}
Here we use the convention that $\lan\xi/k-t\ran^{-2+\delta}|_{k=0}=0 $,
$ \f{{\kappa}^{1/3}|k|^{4/3}}{|k|^{2/3}+{\kappa}^{2/3}(\xi-kt)^2}|_{k=0}=0$.

\begin{proposition}\label{prop: long-time i}
     If $\nu, \mu\in(0, 1)$, $s>\f32$, $\f{\nu+\mu}{2\g\sqrt{\nu \mu} }< 2-\e$, $0<\e<1/\g$,  it holds  that
\begin{align}\label{t big}
        &\f{d}{dt}\sum_{k\in \mathbb{Z}}\int_{\mathbb{R}} \Big(\big|\mathcal{M}\hat{u}\big|^2+\g^2\big|\mathcal{M}\hat{\vth}\big|^2+\Re (\mathcal{M}\hat{\vth}\mathcal{M} \overline{\hat{u}}_1)\Big) d\xi 
+\f{2}{\g}\sum_{k\in \mathbb{Z}\setminus\{0\}}\int_{\mathbb{R}}  |\lan \xi/k-t\ran^{\f{\delta}{2}}\mathcal{M} \hat{u}_2|^2 d\xi\nonumber\\
&+\f{\e}{4} \sum_{k\in \mathbb{Z}}\int_{\mathbb{R}}\kappa^{\f13}|k|^{\f23}(  \big|\mathcal{M} \hat{u}\big|^2+\g^2\big|\mathcal{M} \hat{\vth}\big|^2)d\xi
\nonumber\\
&+\e\sum_{k\in \mathbb{Z}}\int_{\mathbb{R}}\Upsilon(  |\mathcal{M} \hat{u}|^2+\g^2|\mathcal{M} \hat{\vth}|^2) d\xi+
\e\kappa^{-\f13}t^{-3}\sum_{k\in \mathbb{Z}}\int_{\mathbb{R}}(  |\mathcal{M} \hat{u}|^2+\g^2|\mathcal{M} \hat{\vth}|^2) d\xi\nonumber\\
&+\f{\e}{2} \sum_{k\in \mathbb{Z}}\int_{\mathbb{R}} (k^2+(\xi-kt)^2)
\kappa(\big|\mathcal{M}\hat{u}\big|^2+\gamma^2\big|\mathcal{M}\hat{\vth}\big|^2) d\xi \nonumber\\
         \leq &C\|\mathcal{M}(\hat{u},\hat{\vth})\|_{L^2_{k, \xi}}(\||k|^{\f13}\mathcal{M}(\hat{u},\hat{\vth})\|_{L^2_{k, \xi}}^2+\kappa^{\f23}\||k,\xi-kt|\mathcal{M}(\hat{u},\hat{\vth})\|_{L^2_{k, \xi}}^2+\kappa^{-\f23}t^{-3}\|\mathcal{M}\hat{u}\|_{L^2_{k, \xi}}^2\non\\&+
 \kappa^{-\f13}\|\lan \xi/k-t\ran^{\f{\delta}{2}}\mathcal{M}\hat{u}_2\|_{L^2_{k, \xi}}^2+\kappa^{-\f13}\|\sqrt{\Upsilon}\mathcal{M}(\hat{u},\hat{\vth})\|_{L_{k,\xi}^2}^2).
\end{align}

  \end{proposition}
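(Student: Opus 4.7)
The plan is to mirror closely the derivation of Proposition \ref{pr1}, replacing the multiplier $A_k$ by $\mathcal{M}$ and harvesting the additional dissipation contained in $q^*=\partial_t\mathcal{M}/\mathcal{M}$. I would multiply $\eqref{ub3}_1$ by $2\mathcal{M}\overline{\hat u}$ and $\eqref{ub3}_2$ by $2\gamma^2\mathcal{M}\overline{\hat\vth}$, integrate in $\xi$, sum over $k$, take real parts, and then add the time derivative of the cross term $\Re\sum\int\mathcal{M}\hat\vth\,\mathcal{M}\overline{\hat u}_1\,d\xi$ computed from \eqref{ub3} just as in \eqref{c1}. The resulting identity collects the linear contributions into pieces $\mathcal{I}_1,\dots,\mathcal{I}_5$ of the same form as \eqref{rs2}.

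For the linear analysis, using \eqref{def:q*} the term $-2q^*(|\mathcal{M}\hat u|^2+\gamma^2|\mathcal{M}\hat\vth|^2)$ produces four good dissipations on the left: $4\kappa^{-1/3}t^{-3}$ from $\partial_t\mathcal{A}$, $\tfrac{2\kappa^{1/3}|k|^{4/3}}{|k|^{2/3}+\kappa^{2/3}(\xi-kt)^2}$ from $\partial_t\mathcal{M}_1$, $2C_\gamma\lan\xi/k-t\ran^{-2+\delta}$ from $\partial_t\mathcal{M}_2$, and $2\Upsilon$ from $\partial_t\mathcal{M}_3$. The single bad piece $-2\epsilon\kappa^{1/3}\mathbf{1}_{k\neq 0}$ from $\partial_t\mathcal{A}$ is absorbed, up to the claimed $\tfrac{\e}{4}\kappa^{1/3}|k|^{2/3}$, via the elementary case split $\tfrac{\kappa^{1/3}|k|^{4/3}}{|k|^{2/3}+\kappa^{2/3}(\xi-kt)^2}+\nu(\xi-kt)^2\gtrsim\kappa^{1/3}|k|^{2/3}$ for $k\neq 0$. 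The remaining piece $\tfrac{k(\xi-kt)}{k^2+(\xi-kt)^2}|\mathcal{M}\hat u|^2$ becomes $-\Re(\mathcal{M}\hat u_2\,\mathcal{M}\overline{\hat u}_1)$ via divergence-free exactly as in \eqref{linear L}, while $\lan\xi/k-t\ran^{-2+\delta}|\mathcal{M}\hat u|^2=\lan\xi/k-t\ran^{\delta}|\mathcal{M}\hat u_2|^2$ gives the $\hat u_2$ weighted term. The cross terms $\mathcal{I}_1,\mathcal{I}_2$ are then handled exactly as in \eqref{I1}--\eqref{I2}: $\mathcal{I}_1$ by the spectral hypothesis $(\nu+\mu)/(2\gamma\sqrt{\nu\mu})<2-\e$, and $\mathcal{I}_2$ by noting $\tfrac{k^2}{k^2+(\xi-kt)^2}\leq\lan\xi/k-t\ran^{-2+\delta}$ so the offending term is absorbed by the $\mathcal{M}_2$-dissipation plus the $\lan\xi/k-t\ran^{\delta}|\mathcal{M}\hat u_2|^2$ just identified.

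The nonlinear pieces $\mathcal{I}_3,\mathcal{I}_4,\mathcal{I}_5$ are estimated by the bilinear symmetrization used in \eqref{I4}--\eqref{I6}: relabelling via $\hat u(k-l,\xi-\eta)=\overline{\hat u}(l-k,\eta-\xi)$ reduces each integrand to products involving $|l\mathcal{M}^2(t,k,\xi)-k\mathcal{M}^2(t,l,\eta)|$ and $|(\eta-lt)\mathcal{M}^2(t,k,\xi)-(\xi-kt)\mathcal{M}^2(t,l,\eta)|$. The symmetric pairing is bounded by \eqref{M1} of Lemma \ref{lem2} followed by Young's and H\"older's inequalities, delivering the cubic contributions $\||k|^{1/3}\mathcal{M}(\hat u,\hat\vth)\|^2\|\mathcal{M}(\hat u,\hat\vth)\|$ and $\kappa^{2/3}\||k,\xi-kt|\mathcal{M}(\hat u,\hat\vth)\|^2\|\mathcal{M}(\hat u,\hat\vth)\|$ on the right of \eqref{t big}. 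The asymmetric pairing is estimated by bounding $(\eta-lt)\mathcal{M}^2$ and $(\xi-kt)\mathcal{M}^2$ separately through \eqref{M2}--\eqref{M3} and the packaging Lemma \ref{lem7}, which produces precisely the RHS contributions $\kappa^{-2/3}t^{-3}\|\mathcal{M}\hat u\|^2$, $\kappa^{-1/3}\|\lan\xi/k-t\ran^{\delta/2}\mathcal{M}\hat u_2\|^2$, and $\kappa^{-1/3}\|\sqrt\Upsilon\,\mathcal{M}(\hat u,\hat\vth)\|^2$. The pressure $\hat p_{NL}$ is treated exactly as in \eqref{I6} using $-\Delta_L p_{NL}=2\dive_L(u_2\nabla_L^2 u)$ so the factor $|k,\xi-kt|$ lands on $\hat u_2$ and feeds back into the same symmetrization.

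The principal obstacle is the asymmetric multiplier bound: unlike the algebraic factor in $l\mathcal{M}^2-k\mathcal{M}^2$, the pair $(\eta-lt,\xi-kt)$ does not cancel generically in $(\eta-lt)\mathcal{M}^2(t,k,\xi)-(\xi-kt)\mathcal{M}^2(t,l,\eta)$; one is forced into the three-term splitting of \eqref{M2}--\eqref{M3}, each piece coupling to a different dissipation channel ($\mathcal{M}_1$, $\mathcal{M}_3$, or $\nu$). Tracking which nonlinear contribution is absorbed by which linear dissipation in the bootstrap closure using $\|\mathcal{M}(u,\vth)\|_{L^2}\leq c_2\kappa^{1/3}$ is delicate but becomes mechanical once Lemmas \ref{lem1}--\ref{lem7} are in hand.
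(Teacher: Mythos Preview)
Your proposal is correct and follows essentially the same route as the paper's proof: derive the weighted energy identity from \eqref{ub3}, expand $-2q^*$ via \eqref{def:q*} to harvest the four dissipation channels, absorb the $-2\epsilon\kappa^{1/3}\mathbf{1}_{k\neq0}$ piece by \eqref{nu1}, control $\mathcal{I}_1^*,\mathcal{I}_2^*$ as in \eqref{I1*}--\eqref{I2*}, and estimate the nonlinear terms through Lemma~\ref{lem2} and Lemma~\ref{lem7}. One small point: your description of $\mathcal{I}_2$ understates the work---in the long-time case $q^*$ has four extra pieces beyond those in \eqref{def:q}, so $\mathcal{I}_2^*$ contains cross terms like $\Upsilon\,\mathcal{M}\hat\vth\,\mathcal{M}\overline{\hat u}_1$ and $\kappa^{-1/3}t^{-3}\mathcal{M}\hat\vth\,\mathcal{M}\overline{\hat u}_1$ which must each be absorbed (via Cauchy--Schwarz with factor $1/\gamma<2$) into the matching dissipation on the left; this is the content of \eqref{I2*} and is routine once noted.
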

 \begin{proof}
 Multiplying   $ \eqref{ub3}_1$ with $2 \mathcal{M} \overline{\hat{u}}$,  and  then integrating   with respect to $\xi$, subsequently summing over $k$, taking the real part, we get
\begin{align}
& \f{d}{dt}\sum_{k\in \mathbb{Z}}\int_{\mathbb{R}} |\mathcal{M} \hat{u}|^2d\xi-2\sum_{k\in \mathbb{Z}}\int_{\mathbb{R}} q^*\big|\mathcal{M} \hat{u}\big|^2d\xi+2\nu\sum_{k\in \mathbb{Z}}\int_{\mathbb{R}}  (k^2+(\xi-kt)^2)\big|\mathcal{M} \hat{u}\big|^2 d\xi \nonumber\\
&-2\Re\sum_{k\in \mathbb{Z}}\int_{\mathbb{R}}\f{k^2-(\xi-kt)^2}{k^2+(\xi-kt)^2}\mathcal{M} \hat{u}_2\mathcal{M} \overline{\hat{u}}_1d\xi +2\g^2\Re\sum_{k\in \mathbb{Z}}\int_{\mathbb{R}}\f{(\xi-kt)^2}{k^2+(\xi-kt)^2}\mathcal{M} \hat{\vth} \mathcal{M} \overline{\hat{u}}_2d\xi\nonumber\\
&+2\Re\sum_{k\in \mathbb{Z}}\int_{\mathbb{R}}\f{2k^2}{k^2+(\xi-kt)^2}\mathcal{M} \hat{u}_2\mathcal{M} \overline{\hat{u}}_1d\xi +2\g^2\Re\sum_{k\in \mathbb{Z}}\int_{\mathbb{R}}\f{k^2}{k^2+(\xi-kt)^2}\mathcal{M} \hat{\vth} \mathcal{M} \overline{\hat{u}}_2d\xi\nonumber\\
=&2\Re\sum_{k\in \mathbb{Z}}\int_{\mathbb{R}} \mathcal{M} \hat{F}  \mathcal{M} \overline{\hat{u}}.\label{u1*}
\end{align}
Multiplying   $ \eqref{ub3}_2$ with $2 \mathcal{M} \overline{\hat{\vth}}$,  and  then integrating   with respect to $\xi$, subsequently summing over $k$, taking the real part, we get
\begin{align}
& \frac{d}{dt}\sum_{k\in \mathbb{Z}}\int_{\mathbb{R}}\big| \mathcal{M} \hat{\vth}\big|^2d\xi-2\sum_{k\in \mathbb{Z}}\int_{\mathbb{R}} q^*\big|\mathcal{M} \hat{\vth}\big|^2d\xi+2\mu \sum_{k\in \mathbb{Z}}\int_{\mathbb{R}} (k^2+(\xi-kt)^2)\big|\mathcal{M} \hat{\vth}\big|^2d\xi\nonumber\\
&-2\Re\sum_{k\in \mathbb{Z}}\int_{\mathbb{R}}\mathcal{M} \hat{u}_2 \mathcal{M} \overline{\hat{\vth}}d\xi 
=2\Re\sum_{k\in \mathbb{Z}}\int_{\mathbb{R}}\mathcal{M} \hat{G} \mathcal{M} \overline{\hat{\vth}}d\xi.\label{vth*}
\end{align}
Combining  \eqref{u1*} and \eqref{vth*} together, we have
\begin{align}\label{estimate-3*}
& \frac{d}{dt}\sum_{k\in \mathbb{Z}}\int_{\mathbb{R}}\big| \mathcal{M} \hat{u}\big|^2+\gamma^2\big| \mathcal{M} \hat{\vth}\big|^2d\xi-2\sum_{k\in \mathbb{Z}}\int_{\mathbb{R}} q^*(\big|\mathcal{M} \hat{u}\big|^2+\gamma^2\big|\mathcal{M} \hat{\vth}\big|^2)d\xi\nonumber\\
&+2 \sum_{k\in \mathbb{Z}}\int_{\mathbb{R}} (k^2+(\xi-kt)^2)(\nu\big|\mathcal{M} \hat{u}\big|^2+\mu\gamma^2\big|\mathcal{M} \hat{\vth}\big|^2) d\xi+2 \Re\sum_{k\in \mathbb{Z}}\int_{\mathbb{R}}\mathcal{M} \hat{u}_2\mathcal{M} \overline{\hat{u}}_1 d\xi \nonumber\\
=&2\Re\sum_{k\in \mathbb{Z}}\int_{\mathbb{R}}\mathcal{M} \hat{F} \mathcal{M} \overline{\hat{u}}d\xi+2\g^2\Re\sum_{k\in \mathbb{Z}}\int_{\mathbb{R}}  \mathcal{M} \hat{G} \mathcal{M} \overline{\hat{\vth}} d\xi.
\end{align}
Additionally, by $\lan \f{\xi}{k}-t\ran^{-2}\big|\hat{u}\big|^2=\big|\hat{u}_2\big|^2$ (see \eqref{u2}), we have
\begin{align}\label{linear L*}
-2\sum_{k\in \mathbb{Z}}\int_{\mathbb{R}} q^*\big|\mathcal{M} \hat{u}\big|^2d\xi&=-\Re\sum_{k\in \mathbb{Z}}\int_{\mathbb{R}}\mathcal{M} \hat{u}_2\mathcal{M} \overline{\hat{u}}_1 d\xi+\sum_{k\in \mathbb{Z}}\int_{\mathbb{R}} (4\kappa^{-\f13} t^{-3}-2\epsilon \kappa^\f13\mathbf{1}_{k\neq0})\big|\mathcal{M} \hat{u}\big|^2d\xi\non\\
&+2\sum_{k\in \mathbb{Z}}\int_{\mathbb{R}} \f{{\kappa}^{1/3}|k|^{4/3}}{|k|^{2/3}+{\kappa}^{2/3}(\xi-kt)^2}\big|\mathcal{M} \hat{u}\big|^2d\xi\non\\
&+2 C_\g\sum_{k\in \mathbb{Z}\setminus\{0\}}\int_{\mathbb{R}}  |\lan \xi/k-t\ran^{\f{\delta}{2}}\mathcal{M} \hat{u}_2|^2 d\xi+2 \sum_{k\in \mathbb{Z}}\int_{\mathbb{R}} \Upsilon |\mathcal{M} \hat{u}|^2 d\xi,
\end{align}
and
\begin{align}\label{linear L**}
&-2\sum_{k\in \mathbb{Z}}\int_{\mathbb{R}} q^*\g^2\big|\mathcal{M} \hat{\vth}\big|^2 d\xi\non\\
&=\Re\sum_{k\in \mathbb{Z}}\int_{\mathbb{R}}\f{ \g^2 k (\xi-kt)}{k^2+(\xi-kt)^2} \big|\mathcal{M} \hat{\vth}\big|^2d\xi+\sum_{k\in \mathbb{Z}}\int_{\mathbb{R}} (4\kappa^{-\f13} t^{-3}-2\epsilon \kappa^\f13\mathbf{1}_{k\neq0})\g^2\big|\mathcal{M} \hat{\vth}\big|^2d\xi\non\\
&+\sum_{k\in \mathbb{Z}}\int_{\mathbb{R}}\f{2{\kappa}^{1/3}|k|^{4/3}}{|k|^{2/3}+{\kappa}^{2/3}(\xi-kt)^2}\g^2\big|\mathcal{M} \hat{\vth}\big|^2d\xi\non\\
&+2 C_\g\sum_{k\in \mathbb{Z}\setminus\{0\}}\int_{\mathbb{R}} \g^2 |\lan \xi/k-t\ran^{-1+\f{\delta}{2}}\mathcal{M} \hat{\vth}|^2 d\xi
+2 \g^2\sum_{k\in \mathbb{Z}}\int_{\mathbb{R}} \Upsilon  |\mathcal{M} \hat{\vth}|^2 d\xi.
\end{align}

So \eqref{estimate-3*} can be rewritten as 
\begin{align}\label{estimate-4*}
& \frac{d}{dt}\sum_{k\in \mathbb{Z}}\int_{\mathbb{R}}\big| \mathcal{M} \hat{u}\big|^2+\gamma^2\big| \mathcal{M} \hat{\vth}\big|^2d\xi+2 C_\g\sum_{k\in \mathbb{Z}\setminus\{0\}}\int_{\mathbb{R}}  |\lan \xi/k-t\ran^{\f{\delta}{2}}\mathcal{M} \hat{u}_2|^2 d\xi\non\\
&+2 C_\g\sum_{k\in \mathbb{Z}\setminus\{0\}}\int_{\mathbb{R}} \g^2 |\lan \xi/k-t\ran^{-1+\f{\delta}{2}}\mathcal{M} \hat{\vth}|^2 d\xi+\Re\sum_{k\in \mathbb{Z}}\int_{\mathbb{R}} \f{ \g^2 k (\xi-kt)}{k^2+(\xi-kt)^2}\big|\mathcal{M} \hat{\vth}\big|^2 d\xi
\non\\
&+\sum_{k\in \mathbb{Z}\setminus\{0\}}\int_{\mathbb{R}}\f{2{\kappa}^{1/3}|k|^{4/3}}{|k|^{2/3}+{\kappa}^{2/3}(\xi-kt)^2}(\big|\mathcal{M} \hat{u}\big|^2+\g^2\big|\mathcal{M} \hat{\vth}\big|^2)d\xi\non\\
&+2 \sum_{k\in \mathbb{Z}}\int_{\mathbb{R}} \Upsilon( |\mathcal{M} \hat{u}|^2+\g^2\big|\mathcal{M} \hat{\vth}\big|^2) d\xi+\sum_{k\in \mathbb{Z}}\int_{\mathbb{R}}(4\kappa^{-\f13} t^{-3}-2\epsilon \kappa^\f13\mathbf{1}_{k\neq0})( \big|\mathcal{M} \hat{u}\big|^2+\g^2\big|\mathcal{M} \hat{\vth}\big|^2)d\xi\non\\
&+2 \sum_{k\in \mathbb{Z}}\int_{\mathbb{R}} (k^2+(\xi-kt)^2)(\nu\big|\mathcal{M} \hat{u}\big|^2+\mu\gamma^2\big|\mathcal{M} \hat{\vth}\big|^2) d\xi+\Re\sum_{k\in \mathbb{Z}}\int_{\mathbb{R}}\mathcal{M} \hat{u}_2\mathcal{M} \overline{\hat{u}}_1 d\xi \nonumber\\
&
=2\Re\sum_{k\in \mathbb{Z}}\int_{\mathbb{R}}\mathcal{M} \hat{F} \mathcal{M} \overline{\hat{u}}d\xi+2\g^2\Re\sum_{k\in \mathbb{Z}}\int_{\mathbb{R}}  \mathcal{M} \hat{G} \mathcal{M} \overline{\hat{\vth}} d\xi.
\end{align}
We calculate the time derivative of  $\mathcal{M} \hat{\vth}M \overline{\hat{u}}_1$ 
\begin{align}\label{c1*}
&\f{d}{dt}\Big( \mathcal{M} \hat{\vth}M \overline{\hat{u}}_1\Big)
= (\mathcal{M} \hat{\vth})_tM \overline{\hat{u}}_1+\mathcal{M} \hat{\vth}(M \overline{\hat{u}}_{1})_t\nonumber\\
=&2q^* \mathcal{M} \hat{\vth}M \overline{\hat{u}}_1-(\nu+\mu) (k^2+(\xi-kt)^2) \mathcal{M} \hat{\vth}M \overline{\hat{u}}_1+\mathcal{M} \hat{G} M \overline{\hat{u}}_1\nonumber\\
&+\f{k^2-(\xi-kt)^2}{k^2+(\xi-kt)^2} \mathcal{M} \hat{\vth}M \overline{\hat{u}}_2+\f{\g^2k(\xi-kt)}{k^2+(\xi-kt)^2} |\mathcal{M} \hat{\vth}|^2
+  \mathcal{M} \hat{u}_2M \overline{\hat{u}}_1+\mathcal{M} \hat{\vth}M\overline{\hat{F}}_1.
\end{align}

 Integrating   with respect to $\xi$, subsequently summing over $k$, taking the real part of \eqref{c1*}, and adding it to  \eqref{estimate-4*}, we have
  \begin{align}\label{rs2*}
& \f{d}{dt}\sum_{k\in \mathbb{Z}}\int_{\mathbb{R}} \Big(\big|\mathcal{M} \hat{u}\big|^2+\g^2\big|\mathcal{M} \hat{\vth}\big|^2+\Re (\mathcal{M} \hat{\vth}\mathcal{M} \overline{\hat{u}}_1)\Big) d\xi+2 C_\g\sum_{k\in \mathbb{Z}\setminus\{0\}}\int_{\mathbb{R}}  |\lan \xi/k-t\ran^{\f{\delta}{2}}\mathcal{M} \hat{u}_2|^2 d\xi\non\\
&+2 C_\g\sum_{k\in \mathbb{Z}\setminus\{0\}}\int_{\mathbb{R}} \g^2 |\lan \xi/k-t\ran^{-1+\f{\delta}{2}}\mathcal{M} \hat{\vth}|^2 d\xi
\non\\
&+\sum_{k\in \mathbb{Z}\setminus\{0\}}\int_{\mathbb{R}}\f{2{\kappa}^{1/3}|k|^{4/3}}{|k|^{2/3}+{\kappa}^{2/3}(\xi-kt)^2}(\big|\mathcal{M} \hat{u}\big|^2+\g^2\big|\mathcal{M} \hat{\vth}\big|^2)d\xi\non\\
&+2 \sum_{k\in \mathbb{Z}}\int_{\mathbb{R}} \Upsilon( |\mathcal{M} \hat{u}|^2+\g^2\big|\mathcal{M} \hat{\vth}\big|^2) d\xi+\sum_{k\in \mathbb{Z}}\int_{\mathbb{R}}(4\kappa^{-\f13} t^{-3}-2\epsilon \kappa^\f13\mathbf{1}_{k\neq0})(  \big|\mathcal{M} \hat{u}\big|^2+\g^2\big|\mathcal{M} \hat{\vth}\big|^2)d\xi\non\\
&+2 \sum_{k\in \mathbb{Z}}\int_{\mathbb{R}} (k^2+(\xi-kt)^2)(\nu\big|\mathcal{M} \hat{u}\big|^2+\mu\gamma^2\big|\mathcal{M} \hat{\vth}\big|^2) d\xi\non\\
& =\Re\sum_{k\in \mathbb{Z}}\int_{\mathbb{R}}\Big(-\underbrace{(\nu+\mu)(k^2+(\xi-kt)^2)(\mathcal{M} \hat{\vth}M \overline{\hat{u}}_1) }_{\mathcal{I}_1^*}\non\\
&+\underbrace{2q^* (\mathcal{M} \hat{\vth}M \overline{\hat{u}}_1) +\f{k^2-(\xi-k t)^2}{k^2+(\xi-kt)^2} (\mathcal{M} \hat{\vth}M \overline{\hat{u}}_2)}_{\mathcal{I}_2^*}\nonumber\\
 &+ \underbrace{\mathcal{M} \hat{G} M \overline{\hat{u}}_1+\mathcal{M} \hat{\vth}M\overline{ \hat{F}}_1}_{\mathcal{I}_3^*}+\underbrace{2\g^2\mathcal{M} \hat{ G} M \overline{\hat{\vth}}}_{\mathcal{I}_4^*} + \underbrace{2\mathcal{M} \hat{F} M \overline{\hat{u}}}_{\mathcal{I}_5^*}\Big) d\xi.
\end{align}
We mention that the estimate of $I_1^*$ is the same as $I_1$, \begin{align}\label{I1*}
{\left|\mathcal{I}_1^*\right|}
\leq&\f{\nu+\mu}{2\g\sqrt{\nu \mu} }(k^2+(\xi-kt)^2)(\nu\big|\mathcal{M}\hat{u}\big|^2+\mu\g^2\big|\mathcal{M}\hat{\vth}\big|^2),
\end{align} and by the divergence free condition $\eqref{ub3}_3$ and $C_\g\geq 1 $, $\g> \f12$,
\begin{align}\label{I2*}
\left|\mathcal{I}_2^*\right|
=&\bigg|2\Big(-\f{ k (\xi-kt)}{2(k^2+(\xi-kt)^2)}+\epsilon \kappa^\f13\mathbf{1}_{k\neq0}-2\kappa^{-\f13} t^{-3}- \f{{\kappa}^{1/3}|k|^{4/3}}{|k|^{2/3}+{\kappa}^{2/3}(\xi-kt)^2}\non\\
&-C_{\g}\lan\xi/k-t\ran^{-2+\delta}- \Upsilon\Big)(\mathcal{M} \hat{\vth}M \overline{\hat{u}}_1)+\f{k^2-(\xi-k t)^2}{k^2+(\xi-kt)^2} (\mathcal{M} \hat{\vth}\mathcal{M} \overline{\hat{u}}_2)\bigg|\nonumber\\
=&\Big|\f{k^2}{k^2+(\xi-kt)^2} (\mathcal{M} \hat{\vth}M \overline{\hat{u}}_2)+2\big(\epsilon \kappa^\f13\mathbf{1}_{k\neq0}-2\kappa^{-\f13} t^{-3}- \f{{\kappa}^{1/3}|k|^{4/3}}{|k|^{2/3}+{\kappa}^{2/3}(\xi-kt)^2}\non\\
&-C_{\g}\lan\xi/k-t\ran^{-2+\delta}- \Upsilon\big)(\mathcal{M} \hat{\vth}M \overline{\hat{u}}_1)\Big|\nonumber\\
\leq& \f{1}{\g}\f{k^2}{k^2+(\xi-kt)^2}(\g^2|\mathcal{M} \hat{\vth}|^2+|\mathcal{M} \hat{u}_2|^2)\non\\
&+\f{1}{\g}\Big(2\kappa^{-\f13} t^{-3}+\epsilon \kappa^\f13\mathbf{1}_{k\neq0}+\f{{\kappa}^{1/3}|k|^{4/3}}{|k|^{2/3}+{\kappa}^{2/3}(\xi-kt)^2}+ \Upsilon\Big)(\g^2|\mathcal{M} \hat{\vth}|^2+|\mathcal{M} \hat{u}_1|^2)\non\\
&+\f{C_\g}{\g}\lan\xi/k-t\ran^{\delta}\f{k^2}{k^2+(\xi-kt)^2}(\g^2|\mathcal{M} \hat{\vth}|^2+\f{|\xi-kt|^2}{k^2}|\mathcal{M} \hat{u}_2|^2)\mathbf{1}_{k\neq0}\nonumber\\
\leq &\f{C_{\g}+1}{\g}\lan\xi/k-t\ran^{-2+\delta} \g^2|\mathcal{M} \hat{\vth}|^2+\f{C_{\g}}{\g}\lan\xi/k-t\ran^{\delta}|\mathcal{M} \hat{u}_2|^2\\
&\non+\f{1}{\g}\Big(2\kappa^{-\f13} t^{-3}+{\epsilon \kappa^\f13\mathbf{1}_{k\neq0}}+\f{{\kappa}^{1/3}|k|^{4/3}}{|k|^{2/3}+{\kappa}^{2/3}(\xi-kt)^2}+ \Upsilon\Big)(\g^2|\mathcal{M} \hat{\vth}|^2+|\mathcal{M} \hat{u}|^2).
\end{align}
 So, taking \eqref{I1*}, \eqref{I2*} into  \eqref{rs2*},  $\f{1}{\g}\leq \f{\nu+\mu}{2\g\sqrt{\nu \mu} }< 2-\e<2$ and $\f{C_{\g}+2}{\g}\leq 2C_\g$ (as $\f{2}{2\g-1}\leq C_\g$), we have
\begin{align}\label{rs4*}
&\f{d}{dt}\sum_{k\in \mathbb{Z}}\int_{\mathbb{R}} \Big(\big|\mathcal{M}\hat{u}\big|^2+\g^2\big|\mathcal{M}\hat{\vth}\big|^2+\Re (\mathcal{M}\hat{\vth}\mathcal{M} \overline{\hat{u}}_1)\Big) d\xi 
+\f{2}{\g}\sum_{k\in \mathbb{Z}\setminus\{0\}}\int_{\mathbb{R}}  |\lan \xi/k-t\ran^{\f{\delta}{2}}\mathcal{M} \hat{u}_2|^2 d\xi\nonumber\\
&+\e\sum_{k\in \mathbb{Z}}\int_{\mathbb{R}}\f{{\kappa}^{1/3}|k|^{4/3}}{|k|^{2/3}+{\kappa}^{2/3}(\xi-kt)^2}(\big|\mathcal{M} \hat{u}\big|^2+\g^2\big|\mathcal{M} \hat{\vth}\big|^2)d\xi\nonumber\\
&-{4}\epsilon \kappa^\f13\sum_{k\in \mathbb{Z}\setminus\{0\}}\int_{\mathbb{R}}(  \big|\mathcal{M} \hat{u}\big|^2+\g^2\big|\mathcal{M} \hat{\vth}\big|^2)d\xi
\nonumber\\
&+\e\sum_{k\in \mathbb{Z}}\int_{\mathbb{R}}\Upsilon(  |\mathcal{M} \hat{u}|^2+\g^2|\mathcal{M} \hat{\vth}|^2) d\xi+
\e\kappa^{-\f13}t^{-3}\sum_{k\in \mathbb{Z}}\int_{\mathbb{R}}(  |\mathcal{M} \hat{u}|^2+\g^2|\mathcal{M} \hat{\vth}|^2) d\xi\nonumber\\
&+\e \sum_{k\in \mathbb{Z}}\int_{\mathbb{R}} (k^2+(\xi-kt)^2)(\nu\big|\mathcal{M}\hat{u}\big|^2+\mu\gamma^2\big|\mathcal{M}\hat{\vth}\big|^2) d\xi \nonumber\\
 \leq&\Re\sum_{k\in \mathbb{Z}}\int_{\mathbb{R}}  \Big(\mathcal{I}_3^*+\mathcal{I}_4^*+\mathcal{I}_5^*\Big) d\xi.
\end{align}
 Now we take $\epsilon=\e/16 $ 
 and use $\kappa=\min\{\nu,\mu\} $,
 \begin{align}\non
\f{{\kappa}^{1/3}|k|^{4/3}}{|k|^{2/3}+{\kappa}^{2/3}(\xi-kt)^2}+\kappa(\xi-kt)^2&\geq
\f{{\kappa}^{1/3}|k|^{4/3}}{|k|^{2/3}+{\kappa}^{2/3}(\xi-kt)^2}+\f{|k|^{2/3}\kappa(\xi-kt)^2}{|k|^{2/3}+{\kappa}^{2/3}(\xi-kt)^2}\\&=
|k|^{\f23}\kappa^{\f13}\label{nu1}
\end{align}
to deduce that\begin{align}\label{rs4a}
&\f{d}{dt}\sum_{k\in \mathbb{Z}}\int_{\mathbb{R}} \Big(\big|\mathcal{M}\hat{u}\big|^2+\g^2\big|\mathcal{M}\hat{\vth}\big|^2+\Re (\mathcal{M}\hat{\vth}\mathcal{M} \overline{\hat{u}}_1)\Big) d\xi 
+\f{2}{\g}\sum_{k\in \mathbb{Z}\setminus\{0\}}\int_{\mathbb{R}}  |\lan \xi/k-t\ran^{\f{\delta}{2}}\mathcal{M} \hat{u}_2|^2 d\xi\nonumber\\
&+\f{\e}{4} \sum_{k\in \mathbb{Z}}\int_{\mathbb{R}}\kappa^{\f13}|k|^{\f23}(  \big|\mathcal{M} \hat{u}\big|^2+\g^2\big|\mathcal{M} \hat{\vth}\big|^2)d\xi
\nonumber\\
&+\e\sum_{k\in \mathbb{Z}}\int_{\mathbb{R}}\Upsilon(  |\mathcal{M} \hat{u}|^2+\g^2|\mathcal{M} \hat{\vth}|^2) d\xi+
\e\kappa^{-\f13}t^{-3}\sum_{k\in \mathbb{Z}}\int_{\mathbb{R}}(  |\mathcal{M} \hat{u}|^2+\g^2|\mathcal{M} \hat{\vth}|^2) d\xi\nonumber\\
&+\f{\e}{2} \sum_{k\in \mathbb{Z}}\int_{\mathbb{R}} (k^2+(\xi-kt)^2){\kappa}(\big|\mathcal{M}\hat{u}\big|^2+\gamma^2\big|\mathcal{M}\hat{\vth}\big|^2) d\xi \nonumber\\
 \leq&\Re\sum_{k\in \mathbb{Z}}\int_{\mathbb{R}}  \Big(\mathcal{I}_3^*+\mathcal{I}_4^*+\mathcal{I}_6^*\Big) d\xi.
\end{align}
Here $\mathcal{I}_3^*:={\mathcal{M} \hat{G} \mathcal{M} \overline{\hat{u}}_1}+\mathcal{M}\hat{\vth}\mathcal{M}\overline{ \hat{F}}_1,\quad \mathcal{I}_4^*:={2\g^2\mathcal{M} \hat{ G}  \mathcal{M} \overline{\hat{\vth}}},\quad  \mathcal{I}_5^*:={2\mathcal{M} \hat{F} \mathcal{M} \overline{\hat{u}}}.$

For the nonlinear term of \eqref{rs4*}, we use  Lemma \ref{lem2} and Lemma \ref{lem7} to obtain
\begin{align*}
&\Big|\Re\sum_{k\in \mathbb{Z}}\int_{\mathbb{R}}\mathcal{I}_4^* d\xi\Big|=\Big|2\g^2\Re\sum_{k\in \mathbb{Z}}\int_{\mathbb{R}}(\mathcal{M} \hat{G} \mathcal{M} \overline{\hat{\vth}})d\xi\Big|\\
=&\Big|2\g^2\Re\sum_{k, l\in \mathbb{Z}}\int_{\mathbb{R}^2}( \hat{u}_1(k-l, \xi-\eta )il\hat{\vth}(l, \eta) +  \hat{u}_2(k-l, \xi-\eta) i (\eta-l t)\hat{\vth}(l, \eta) )\mathcal{M}^2\overline{\hat{\vth}}(k, \xi)d\xi d\eta\Big| \\
\leq &\sum_{k, l\in \mathbb{Z}}\int_{\mathbb{R}^2}\g^2\Big[\Big|l \mathcal{M}^2 (t,k, \xi)-k\mathcal{M}^2 (t, l,\eta) \Big||\hat{u}_1(k-l, \xi-\eta )||\hat{\vth}(l, \eta)||\overline{\hat{\vth}}(k, \xi) | \\
&+2\Big|(\eta-lt) \mathcal{M}^2 (t,k, \xi)\Big| |\hat{u}_2(k-l, \xi-\eta )||\hat{\vth}(l, \eta)||\overline{\hat{\vth}}(k, \xi)|\Big]d\xi d\eta \\
\leq& C\sum_{k, l\in \mathbb{Z}}\int_{\mathbb{R}^2}\mathcal{M}(t, k,\xi)\mathcal{M}(t, l,\eta)\mathcal{M}(t, k-l,\xi-\eta) (\lan k, \xi\ran^{\f12-s}+\lan l, \eta\ran^{\f12-s}+\lan k-l, \xi-\eta\ran^{\f12-s})\times\\&(|k|^{\f13}|k-l|^{\f13}+|l|^{\f13}|k-l|^{\f13}+|kl|^{\f13}+\kappa^{\f23}|kl|)|\hat{u}(k-l, \xi-\eta )||\hat{\vth}(l, \eta)||\overline{\hat{\vth}}(k, \xi)| d\xi d\eta\\&+C\|e^{\epsilon \kappa^\f13 t}\hat{u}_2\|_{L^1_{k, \xi}}\||k,\xi-kt|\mathcal{M}{\hat{\vth}}\|_{L^2_{k, \xi}}\|\mathcal{M}\hat{\vth}\|_{L^2_{k, \xi}}\\&+C\|\mathcal{M}\hat{u}_2\|_{L^2_{k, \xi}}(\kappa^{\f16}\||k,\xi-kt|\mathcal{M}{\hat{\vth}}\|_{L^2_{k, \xi}}+\kappa^{-\f16}\||k|^{\f13}\mathcal{M}{\hat{\vth}}\|_{L^2_{k, \xi}})\|\mathcal{M}\hat{\vth}\|_{L^2_{k, \xi}}\\
 &+C\kappa^{-\f13}\|\lan \xi/k-t\ran^{\f{\delta}{2}}\mathcal{M}\hat{u}_2\|_{L^2_{k, \xi}}\|\mathcal{M}\hat{\vth}\|_{L_{k,\xi}^2}\|\sqrt{\Upsilon}\mathcal{M}\hat{\vth}\|_{L_{k,\xi}^2}
\\ \leq &C\||k|^{\f13}\mathcal{M}\hat{u}\|_{L^2_{k, \xi}}
\||k|^{\f13}\mathcal{M}{\hat{\vth}}\|_{L^2_{k, \xi}}\|\mathcal{M}\hat{\vth}\|_{L^2_{k, \xi}}+C\|\mathcal{M}\hat{u}\|_{L^2_{k, \xi}}
(\||k|^{\f13}\mathcal{M}{\hat{\vth}}\|_{L^2_{k, \xi}}^2+\kappa^{\f23}\||k|\mathcal{M}{\hat{\vth}}\|_{L^2_{k, \xi}}^2)\\
 &+C t^{-\f32}\|\mathcal{M}\hat{u}\|_{L^2_{k, \xi}}\||k,\xi-kt|\mathcal{M}{\hat{\vth}}\|_{L^2_{k, \xi}}\|\mathcal{M}\hat{\vth}\|_{L^2_{k, \xi}}\\&+C(\kappa^{-\f13}\|\mathcal{M}\hat{u}_2\|_{L^2_{k, \xi}}^2+\kappa^{\f23}\||k,\xi-kt|\mathcal{M}{\hat{\vth}}\|_{L^2_{k, \xi}}^2+\||k|^{\f13}\mathcal{M}{\hat{\vth}}\|_{L^2_{k, \xi}}^2)\|\mathcal{M}\hat{\vth}\|_{L^2_{k, \xi}}\\
 &+C\kappa^{-\f13}(\|\lan \xi/k-t\ran^{\f{\delta}{2}}\mathcal{M}\hat{u}_2\|_{L^2_{k, \xi}}^2+\|\sqrt{\Upsilon}\mathcal{M}\hat{\vth}\|_{L_{k,\xi}^2}^2)\|\mathcal{M}\hat{\vth}\|_{L_{k,\xi}^2}\\
 \leq &C\|\mathcal{M}(\hat{u},\hat{\vth})\|_{L^2_{k, \xi}}(\||k|^{\f13}\mathcal{M}(\hat{u},\hat{\vth})\|_{L^2_{k, \xi}}^2+\kappa^{\f23}\||k,\xi-kt|\mathcal{M}\hat{\vth}\|_{L^2_{k, \xi}}^2+\kappa^{-\f23}t^{-3}\|\mathcal{M}\hat{u}\|_{L^2_{k, \xi}}^2\\&+
 \kappa^{-\f13}\|\lan \xi/k-t\ran^{\f{\delta}{2}}\mathcal{M}\hat{u}_2\|_{L^2_{k, \xi}}^2+\kappa^{-\f13}\|\sqrt{\Upsilon}\mathcal{M}\hat{\vth}\|_{L_{k,\xi}^2}^2),
\end{align*}
where we used (recall that $\mathcal{M}(t, k, \xi)\sim \lan k, \xi-kt\ran^\frac{1}{2} 
\lan k, \xi\ran^se^{\epsilon\kappa^\f13 t\mathbf{1}_{k\neq 0}}$)
\beno
\|e^{\epsilon \kappa^\f13 t}\hat{ u}_2\|_{L^1}\leq C\lan t\ran^{-\f32}\||k_+, \xi-kt|^\frac{1}{2} \lan k, \xi\ran^se^{\epsilon \kappa^\f13 t} \hat{ u}\mathbf{1}_{k\neq0}\|_{L^2}\leq C\lan t\ran^{-\f32} \|\mathcal{M} \hat{u}\|_{L^2},
\eeno
by Lemma  \ref{Lem3}.

For $I^*_3$, we use symmetrization, Lemma \ref{lem2} and Lemma \ref{lem7} to obtain
\begin{align*}
&\Big|\Re\sum_{k\in \mathbb{Z}}\int_{\mathbb{R}}\mathcal{I}_3^*d\xi\Big|=\Big|\Re\sum_{k\in \mathbb{Z}}\int_{\mathbb{R}}(\mathcal{M} \hat{G} \mathcal{M} \overline{\hat{u}}_1+\mathcal{M}\hat{\vth}\mathcal{M}\overline{ \hat{F}}_1)d\xi\Big|\\
=&\Big|\Re\sum_{k, l\in \mathbb{Z}}\int_{\mathbb{R}^2}\Big[( \hat{u}_1(k-l, \xi-\eta )il\hat{\vth}(l, \eta) +  \hat{u}_2(k-l, \xi-\eta) i (\eta-l t)\hat{\vth}(l, \eta) )\mathcal{M}^2\overline{\hat{u}}_1(k, \xi) \\
&-\mathcal{M}^2\hat{\vth}(l, \eta)\big( \hat{u}_1(k-l, \xi-\eta )ik\overline{\hat{u}}_1(k, \xi) +  \hat{u}_2(k-l, \xi-\eta) i (\xi-kt)\overline{\hat{u}}_1(k, \xi) \nonumber\\
&-\f{2il}{l^2+(\eta-lt)^2}
\hat{u}_2(k-l, \xi-\eta )(\xi-kt)(l\overline{\hat{u}}_1(k, \xi)+(\eta-lt)\overline{\hat{u}}_2(k, \xi)) \big) \Big]d\xi d\eta\Big| \nonumber\\
\leq &\sum_{k, l\in \mathbb{Z}}\int_{\mathbb{R}^2}\Big[\Big|l \mathcal{M}^2 (t, k, \xi)-k\mathcal{M} ^2 (t, l,  \eta) \Big||\hat{u}_1(k-l, \xi-\eta )||\hat{\vth}(l, \eta)||\overline{\hat{u}}_1(k, \xi) | \nonumber\\
&+(|(\eta-lt) \mathcal{M}^2 (t, k, \xi)|+|(\xi-kt) \mathcal{M}^2 (t, l, \eta)|) |\hat{u}_2(k-l, \xi-\eta )||\hat{\vth}(l, \eta)||\overline{\hat{u}}_1(k, \xi)|\Big]d\xi d\eta\\
\leq& C\sum_{k, l\in \mathbb{Z}}\int_{\mathbb{R}^2}\mathcal{M}(t, k,\xi)\mathcal{M}(t, l,\eta)\mathcal{M}(t, k-l,\xi-\eta) (\lan k, \xi\ran^{\f12-s}+\lan l, \eta\ran^{\f12-s}+\lan k-l, \xi-\eta\ran^{\f12-s})\times\\&(|k|^{\f13}|k-l|^{\f13}+|l|^{\f13}|k-l|^{\f13}+|kl|^{\f13}+\kappa^{\f23}|kl|)|\hat{u}(k-l, \xi-\eta )||\hat{\vth}(l, \eta)||\overline{\hat{u}}_1(k, \xi)| d\xi d\eta\\&+C\|e^{\epsilon \kappa^\f13 t}\hat{u}_2\|_{L^1_{k, \xi}}(\||k,\xi-kt|\mathcal{M}{\hat{\vth}}\|_{L^2_{k, \xi}}\|\mathcal{M}\hat{u}\|_{L^2_{k, \xi}}+\|\mathcal{M}{\hat{\vth}}\|_{L^2_{k, \xi}}\||k,\xi-kt|\mathcal{M}\hat{u}\|_{L^2_{k, \xi}})\\&+C\|\mathcal{M}\hat{u}_2\|_{L^2_{k, \xi}}(\kappa^{\f16}\||k,\xi-kt|\mathcal{M}{\hat{\vth}}\|_{L^2_{k, \xi}}+\kappa^{-\f16}\||k|^{\f13}\mathcal{M}{\hat{\vth}}\|_{L^2_{k, \xi}})\|\mathcal{M}\hat{u}\|_{L^2_{k, \xi}}\\&+C\|\mathcal{M}\hat{u}_2\|_{L^2_{k, \xi}}(\kappa^{\f16}\||k,\xi-kt|\mathcal{M}\hat{u}\|_{L^2_{k, \xi}}+\kappa^{-\f16}\||k|^{\f13}\mathcal{M}\hat{u}\|_{L^2_{k, \xi}})\|\mathcal{M}{\hat{\vth}}\|_{L^2_{k, \xi}}\\
 &+C\kappa^{-\f13}\|\lan \xi/k-t\ran^{\f{\delta}{2}}\mathcal{M}\hat{u}_2\|_{L^2_{k, \xi}}(\|\mathcal{M}\hat{\vth}\|_{L_{k,\xi}^2}\|\sqrt{\Upsilon}\mathcal{M}\hat{u}\|_{L_{k,\xi}^2}+
 \|\sqrt{\Upsilon}\mathcal{M}\hat{\vth}\|_{L_{k,\xi}^2}\|\mathcal{M}\hat{u}\|_{L_{k,\xi}^2})\\
\leq &C\||k|^{\f13}\mathcal{M}\hat{u}\|_{L^2_{k, \xi}}
\||k|^{\f13}\mathcal{M}{\hat{\vth}}\|_{L^2_{k, \xi}}\|\mathcal{M}\hat{u}\|_{L^2_{k, \xi}}+C\|\mathcal{M}\hat{\vth}\|_{L^2_{k, \xi}}
\||k|^{\f13}\mathcal{M}{\hat{u}}\|_{L^2_{k, \xi}}^2\\&+C\kappa^{\f23}\|\mathcal{M}\hat{u}\|_{L^2_{k, \xi}}\||k|\mathcal{M}{\hat{u}}\|_{L^2_{k, \xi}}\||k|\mathcal{M}{\hat{\vth}}\|_{L^2_{k, \xi}}\\
 &+Ct^{-\f32}\|\mathcal{M}\hat{u}\|_{L^2_{k, \xi}}(\||k,\xi-kt|\mathcal{M}{\hat{\vth}}\|_{L^2_{k, \xi}}\|\mathcal{M}\hat{u}\|_{L^2_{k, \xi}}+\|\mathcal{M}{\hat{\vth}}\|_{L^2_{k, \xi}}\||k,\xi-kt|\mathcal{M}\hat{u}\|_{L^2_{k, \xi}})\\&
 +C(\kappa^{-\f13}\|\mathcal{M}\hat{u}_2\|_{L^2_{k, \xi}}^2+\kappa^{\f23}\||k,\xi-kt|\mathcal{M}{\hat{\vth}}\|_{L^2_{k, \xi}}^2+\||k|^{\f13}\mathcal{M}{\hat{\vth}}\|_{L^2_{k, \xi}}^2)\|\mathcal{M}\hat{u}\|_{L^2_{k, \xi}}\\&+C(\kappa^{-\f13}\|\mathcal{M}\hat{u}_2\|_{L^2_{k, \xi}}^2+\kappa^{\f23}\||k,\xi-kt|\mathcal{M}\hat{u}\|_{L^2_{k, \xi}}^2+\||k|^{\f13}\mathcal{M}\hat{u}\|_{L^2_{k, \xi}}^2)\|\mathcal{M}{\hat{\vth}}\|_{L^2_{k, \xi}}\\
 &+C\kappa^{-\f13}\|\lan \xi/k-t\ran^{\f{\delta}{2}}\mathcal{M}\hat{u}_2\|_{L^2_{k, \xi}}(\|\mathcal{M}\hat{\vth}\|_{L_{k,\xi}^2}\|\sqrt{\Upsilon}\mathcal{M}\hat{u}\|_{L_{k,\xi}^2}+
 \|\sqrt{\Upsilon}\mathcal{M}\hat{\vth}\|_{L_{k,\xi}^2}\|\mathcal{M}\hat{u}\|_{L_{k,\xi}^2})\\
 \leq &C\|\mathcal{M}(\hat{u},\hat{\vth})\|_{L^2_{k, \xi}}(\||k|^{\f13}\mathcal{M}(\hat{u},\hat{\vth})\|_{L^2_{k, \xi}}^2+\kappa^{\f23}\||k,\xi-kt|\mathcal{M}(\hat{u},\hat{\vth})\|_{L^2_{k, \xi}}^2+\kappa^{-\f23}t^{-3}\|\mathcal{M}\hat{u}\|_{L^2_{k, \xi}}^2\\&+
 \kappa^{-\f13}\|\lan \xi/k-t\ran^{\f{\delta}{2}}\mathcal{M}\hat{u}_2\|_{L^2_{k, \xi}}^2+\kappa^{-\f13}\|\sqrt{\Upsilon}\mathcal{M}(\hat{u},\hat{\vth})\|_{L_{k,\xi}^2}^2),
\end{align*}
here we used  that
$
-\Delta_Lp_{NL}=2\div_L(u_2\na_L^2u).
$

For $I^*_5$, we use divergence free condition $ \eqref{ub3}_3$, Lemma \ref{lem2} and Lemma \ref{lem7} to obtain
\begin{align*}
&\Big|\Re\sum_{k\in \mathbb{Z}}\int_{\mathbb{R}}\mathcal{I}_5^* d\xi\Big|=\Big|2\Re\sum_{k\in \mathbb{Z}}\int_{\mathbb{R}}\mathcal{M} \hat{F} \mathcal{M} \overline{\hat{u}}d\xi\Big|\nonumber\\
=&\Big|2\Re\sum_{k, l\in \mathbb{Z}}\int_{\mathbb{R}^2}\Big[( \hat{u}_1(k-l, \xi-\eta )il\hat{u}(l, \eta) +  \hat{u}_2(k-l, \xi-\eta) i (\eta-l t)\hat{u}(l, \eta) \Big] \mathcal{M}^2\overline{\hat{u}}(k, \xi)d\xi d\eta
\non\\
&+2\Re\sum_{k\in \mathbb{Z}}\int_{\mathbb{R}}(ik\hat{p}_{NL} \mathcal{M}^2\overline{\hat{u}}_1(k, \xi)+
i(\xi-kt)\hat{p}_{NL} \mathcal{M}^2\overline{\hat{u}}_2(k, \xi))
d\xi\Big| \nonumber\\
\leq &\sum_{k, l\in \mathbb{Z}}\int_{\mathbb{R}^2}\Big[\Big|l \mathcal{M}^2 (t,k, \xi)-k\mathcal{M}^2 (t, l,\eta) \Big||\hat{u}_1(k-l, \xi-\eta )||\hat{u}(l, \eta)||\overline{\hat{u}}(k, \xi) | \\
&+2\Big|(\eta-lt) \mathcal{M}^2 (t,k, \xi)\Big| |\hat{u}_2(k-l, \xi-\eta )||\hat{u}(l, \eta)||\overline{\hat{u}}(k, \xi)|\Big]d\xi d\eta \\
\leq& C\sum_{k, l\in \mathbb{Z}}\int_{\mathbb{R}^2}\mathcal{M}(t, k,\xi)\mathcal{M}(t, l,\eta)\mathcal{M}(t, k-l,\xi-\eta) (\lan k, \xi\ran^{\f12-s}+\lan l, \eta\ran^{\f12-s}+\lan k-l, \xi-\eta\ran^{\f12-s})\times\\&(|k|^{\f13}|k-l|^{\f13}+|l|^{\f13}|k-l|^{\f13}+|kl|^{\f13}+\kappa^{\f23}|kl|)|\hat{u}(k-l, \xi-\eta )||\hat{u}(l, \eta)||\overline{\hat{u}}(k, \xi)| d\xi d\eta\\&+C\|e^{\epsilon \kappa^\f13 t}\hat{u}_2\|_{L^1_{k, \xi}}\||k,\xi-kt|\mathcal{M}{\hat{u}}\|_{L^2_{k, \xi}}\|\mathcal{M}\hat{u}\|_{L^2_{k, \xi}}\\&+C\|\mathcal{M}\hat{u}_2\|_{L^2_{k, \xi}}(\kappa^{\f16}\||k,\xi-kt|\mathcal{M}{\hat{u}}\|_{L^2_{k, \xi}}+\kappa^{-\f16}\||k|^{\f13}\mathcal{M}{\hat{u}}\|_{L^2_{k, \xi}})\|\mathcal{M}\hat{u}\|_{L^2_{k, \xi}}\\
 &+C\kappa^{-\f13}\|\lan \xi/k-t\ran^{\f{\delta}{2}}\mathcal{M}\hat{u}_2\|_{L^2_{k, \xi}}\|\mathcal{M}\hat{u}\|_{L_{k, \xi}^2}\|\sqrt{\Upsilon}\mathcal{M}\hat{u}\|_{L_{k,\xi}^2}\\
\leq &
C\|\mathcal{M}\hat{u}\|_{L^2_{k, \xi}}
(\||k|^{\f13}\mathcal{M}{\hat{u}}\|_{L^2_{k, \xi}}^2+\kappa^{\f23}\||k|\mathcal{M}{\hat{u}}\|_{L^2_{k, \xi}}^2)\\&+Ct^{-\f32}\|\mathcal{M}\hat{u}\|_{L^2_{k, \xi}}\||k,\xi-kt|\mathcal{M}{\hat{u}}\|_{L^2_{k, \xi}}\|\mathcal{M}\hat{u}\|_{L^2_{k, \xi}}\\&+C(\kappa^{-\f13}\|\mathcal{M}\hat{u}_2\|_{L^2_{k, \xi}}^2+\kappa^{\f23}\||k,\xi-kt|\mathcal{M}{\hat{u}}\|_{L^2_{k, \xi}}^2+\||k|^{\f13}\mathcal{M}{\hat{u}}\|_{L^2_{k, \xi}}^2)\|\mathcal{M}\hat{u}\|_{L^2_{k, \xi}}\\
 &+C\kappa^{-\f13}(\|\lan \xi/k-t\ran^{\f{\delta}{2}}\mathcal{M}\hat{u}_2\|_{L^2_{k, \xi}}^2+\|\sqrt{\Upsilon}\mathcal{M}\hat{u}\|_{L_{k,\xi}^2}^2)\|\mathcal{M}\hat{u}\|_{L_{k, \xi}^2}\\
 \leq &C\|\mathcal{M}\hat{u}\|_{L^2_{k, \xi}}(\||k|^{\f13}\mathcal{M}\hat{u}\|_{L^2_{k, \xi}}^2+\kappa^{\f23}\||k,\xi-kt|\mathcal{M}\hat{u}\|_{L^2_{k, \xi}}^2+\kappa^{-\f23}t^{-3}\|\mathcal{M}\hat{u}\|_{L^2_{k, \xi}}^2\\&+
 \kappa^{-\f13}\|\lan \xi/k-t\ran^{\f{\delta}{2}}\mathcal{M}\hat{u}_2\|_{L^2_{k, \xi}}^2+\kappa^{-\f13}\|\sqrt{\Upsilon}\mathcal{M}\hat{u}\|_{L_{k,\xi}^2}^2).
\end{align*}Summing up and using \eqref{rs4a}, we obtain Proposition \ref{prop: long-time i}.
 \end{proof}

\begin{proof}[Proof of Proposition \ref{pr02}] 
Assume that $\ t\geq T_0 =\kappa^{-\f16},$ and
$\|\mathcal{M} (\hat{u}, \hat{\vth})(t)\|_{L_{k,\xi}^2} \leq c_2\kappa^\f13,$ 
then by Proposition \ref{prop: long-time i}, we have
\begin{align*}
        &\f{d}{dt}\sum_{k\in \mathbb{Z}}\int_{\mathbb{R}} \Big(\big|\mathcal{M}\hat{u}\big|^2+\g^2\big|\mathcal{M}\hat{\vth}\big|^2+\Re (\mathcal{M}\hat{\vth}\mathcal{M} \overline{\hat{u}}_1)\Big) d\xi 
+\f{2}{\g}\sum_{k\in \mathbb{Z}\setminus\{0\}}\int_{\mathbb{R}}  |\lan \xi/k-t\ran^{\f{\delta}{2}}\mathcal{M} \hat{u}_2|^2 d\xi\nonumber\\
&+\f{\e}{4} \sum_{k\in \mathbb{Z}}\int_{\mathbb{R}}\kappa^{\f13}|k|^{\f23}(  \big|\mathcal{M} \hat{u}\big|^2+\g^2\big|\mathcal{M} \hat{\vth}\big|^2)d\xi
\nonumber\\
&+\e\sum_{k\in \mathbb{Z}}\int_{\mathbb{R}}\Upsilon(  |\mathcal{M} \hat{u}|^2+\g^2|\mathcal{M} \hat{\vth}|^2) d\xi+
\e\kappa^{-\f13}t^{-3}\sum_{k\in \mathbb{Z}}\int_{\mathbb{R}}(  |\mathcal{M} \hat{u}|^2+\g^2|\mathcal{M} \hat{\vth}|^2) d\xi\nonumber\\
&+\f{\e}{2} \sum_{k\in \mathbb{Z}}\int_{\mathbb{R}} (k^2+(\xi-kt)^2)\kappa(\big|\mathcal{M}\hat{u}\big|^2+\gamma^2\big|\mathcal{M}\hat{\vth}\big|^2) d\xi \nonumber\\
         \leq &C^* c_2\kappa^\f13 \bigg(\||k|^{\f13}\mathcal{M}(\hat{u},\hat{\vth})\|_{L^2_{k, \xi}}^2+\kappa^{\f23}\||k,\xi-kt|\mathcal{M}(\hat{u},\hat{\vth})\|_{L^2_{k, \xi}}^2+\kappa^{-\f23}t^{-3}\|\mathcal{M}\hat{u}\|_{L^2_{k, \xi}}^2\non\\&+
 \kappa^{-\f13}\|\lan \xi/k-t\ran^{\f{\delta}{2}}\mathcal{M}\hat{u}_2\|_{L^2_{k, \xi}}^2+\kappa^{-\f13}\|\sqrt{\Upsilon}\mathcal{M}(\hat{u},\hat{\vth})\|_{L_{k,\xi}^2}^2 \bigg),
\end{align*}
 where $C^*>1$ depends only on $\g,s,\e$. Now we take $c_2= {\e_1}/{C^*}$,  $\e_1=\e/4$. Then we obtain
  $$\f{d}{dt}\sum_{k\in \mathbb{Z}}\int_{\mathbb{R}} \Big(\big|\mathcal{M}\hat{u}\big|^2+\g^2\big|\mathcal{M}\hat{\vth}\big|^2+\Re (\mathcal{M}\hat{\vth}\mathcal{M} \overline{\hat{u}}_1)\Big) d\xi \leq 0.$$
 Here we used $\e_1=\e/4<\e/2<\e<1/\g<2/\g $. Thus, we obtain Proposition \ref{pr02}.
          \end{proof}


\end{document}